\numberwithin{equation}{section}
\newtheorem{theorem}{Theorem}[section]	
\newtheorem{proposition}[theorem]{Proposition}	
\newtheorem{corollary}[theorem]{Corollary}	
\newtheorem{lemma}[theorem]{Lemma}
\newtheorem{definition}[theorem]{Definition}
\newtheorem{remark}[theorem]{Remark}
\newcommand{\A}{\mathscr{A}}
\newcommand{\B}{\mathscr{B}}
\newcommand{\C}{\mathscr{C}}
\newcommand{\F}{\mathscr{F}}
\newcommand{\z}{\mathscr{Z}}
\newcommand{\E}{\mathscr{E}}
\begin{document}

\title{\textsc{Central extensions of preordered groups}}
\author{Marino Gran}
\author{Aline Michel}
\thanks{The second author's research is funded by a FRIA doctoral grant of the \emph{Communaut\'e française de Belgique}}
\email{marino.gran@uclouvain.be}
\email{aline.michel@uclouvain.be}
\address{Universit{\'e} Catholique de Louvain, Institut de Recherche en Math{\'e}matique et Physique, Chemin du Cyclotron 2, 1348 Louvain-la-Neuve, Belgium}
\date{}
\begin{abstract}
We prove that the category of preordered groups contains two full reflective subcategories that give rise to some interesting Galois theories. The first one is the category of the so-called commutative objects, which are precisely the preordered groups whose group law is commutative. The second one is the category of abelian objects, that  turns out to be the category of monomorphisms in the category of abelian groups. We give a precise description of the reflector to this subcategory, and we prove that it induces an admissible Galois structure and then a natural notion of \emph{categorical central extension}. We then characterize the central extensions of preordered groups in purely algebraic terms: these are shown to be the central extensions of groups having the additional property that their restriction to the positive cones is a special homogeneous surjection of monoids.
\end{abstract}
\keywords{Preordered groups, preordered abelian groups, categorical Galois theory, abelian object, commutative object, central extension, special homogeneous surjection.}
\maketitle

\section*{Introduction}
A \emph{preordered group} $(G,\le)$ is a group $G = (G,+,0)$ endowed with a preorder relation $\le$ which is compatible with the addition $+$ of the group $G$, in the sense that, if $a \le c$ and $b \le d$, then $a + b \le c + d$  (for $a,b,c,d \in G$). Preordered groups and monotone group homomorphisms form a category, denoted by $\mathsf{PreOrdGrp}$.
This category is actually isomorphic to another category, whose objects are given by pairs $(G,P_G)$, where $G$ is a group and $P_G$ a submonoid of $G$ closed under conjugation in $G$. This submonoid $P_G$ is usually called the \emph{positive cone} of $G$, and an object $(G,P_G)$ can be depicted as \begin{center}
\begin{tikzcd}
P_G \arrow[r,tail]
& G
\end{tikzcd}
\end{center} 
where the arrow represents the inclusion of $P_G$ in $G$. An arrow between two such objects $(G,P_G)$ and $(H,P_H)$ is given by a pair $(f,\bar{f}) \colon (G,P_G) \rightarrow (H,P_H)$ of monoid morphisms making the diagram 
\begin{equation} \label{morphism in PreOrdGrp}
\begin{tikzcd}
P_G \arrow[r,"{\bar{f}}"] \arrow[d,tail]
& P_H \arrow[d,tail]\\
G \arrow[r,"f"']
& H
\end{tikzcd}
\end{equation}
commute, so that $f \colon G \rightarrow H$ is a group homomorphism that ``restricts'' to the positive cones, in the sense that $f(P_G) \subseteq P_H$.
It is this second equivalent definition of the category $\mathsf{PreOrdGrp}$ of preordered groups that we shall use throughout this article.
As shown in \cite{CMFM} the category $\mathsf{PreOrdGrp}$ is  both complete and cocomplete and is a \emph{normal} category in the sense of  \cite{JZ10}, that is a pointed regular category where every regular epimorphism is a normal epimorphism (i.e. a cokernel). 

In this article we prove that the lattice of normal subobjects on any preordered group $(G,P_G)$ is \emph{modular} (Proposition \ref{Modularity of PreOrdGrp}), and this implies that any reflective subcategory of $\mathsf{PreOrdGrp}$ that is also closed in it under subobjects and regular quotients is \emph{admissible} from the point of view of Categorical Galois Theory \cite{JG90} (see Proposition \ref{prop JK}). In particular the full subcategory $\mathsf{PreOrdAb}$ of preordered \emph{abelian} groups satisfies this property, giving rise to the adjunction 
\begin{equation} 
\begin{tikzcd}
\mathsf{PreOrdGrp} \arrow[rr,bend left=9,"C"]
& \bot
& \mathsf{PreOrdAb} \arrow[ll,hook',bend left=9,"V"]
\end{tikzcd}
\end{equation}
where $V$ is the inclusion functor and its left adjoint $C$ sends a preordered group $(G,P_G)$ to the preordered abelian group $(G/[G,G], \eta_G (P_G))$, where $\eta_G \colon G \twoheadrightarrow G/[G,G]$ is the quotient of $G$ by its derived subgroup $[G,G]$. Preordered abelian groups turn out to be precisely the \emph{commutative objects} (in the sense of \cite{BB}) of the category $\mathsf{PreOrdGrp}$. A characterization of the normal extensions of preordered groups with respect to this adjunction is given in Theorem \ref{GammaC-normal extensions}: these are precisely the normal epimorphisms 
\begin{tikzcd}
(f,\bar{f}) \colon (G,P_G) \arrow[r,two heads]
& (H,P_H)
\end{tikzcd} 
such that 
\begin{tikzcd}
f \colon G \arrow[r,two heads]
& H
\end{tikzcd} 
is a central extension of groups and, moreover, $a - b + c \in P_G$ whenever $a,b,c \in P_G$ are such that  $\eta_G(a) = \eta_G(b)$ and $f(b) = f(c)$.

We then turn our attention to the composite adjunction 

\begin{center}
\begin{tikzcd}
\mathsf{PreOrdGrp} \arrow[rr,bend left=7.5,"C"]
& \bot
& \mathsf{PreOrdAb} \arrow[ll,hook',bend left=7.5,"V"] \arrow[rr,bend left=7.5,"A"]
& \bot
& \mathsf{Mono(Ab)} \arrow[ll,hook',bend left=7.5,"W"]
\end{tikzcd}
\end{center}
 where $\mathsf{Mono(Ab)}$ is the category of monomorphisms in the category $\mathsf{Ab}$ of abelian groups, $W$ is the inclusion functor and $A$ its left adjoint (described in detail in Section \ref{Commutative and abelian objects}).
 We prove that $\mathsf{Mono(Ab)}$ is the category of \emph{abelian objects} in $\mathsf{PreOrdGrp}$ (Corollary \ref{AbelianObjects}), and we characterize the normal epimorphisms 
\begin{equation} \label{extensions in PreOrdGrp}
\begin{tikzcd}
P_G \arrow[r, two heads, "{\bar{f}}"] \arrow[d,tail]
& P_H \arrow[d,tail]\\
G \arrow[r, two heads,"f"']
& H
\end{tikzcd}
\end{equation}
in $\mathsf{PreOrdGrp}$ (i.e. both $f$ and $\bar{f}$ are surjective) that are \emph{central extensions} in the sense of Categorical Galois Theory \cite{JK}  for this composite adjunction. By using the results established in \cite{MRVdL} we show in Theorem \ref{Gamma-normal and Gamma-central extensions} that these are characterized by the fact that the surjective morphism $f$ is a central extension of groups and $\bar{f}$ a special homogeneous surjection in the sense of \cite{BMFMS}. This result opens the way to the possibility of studying the non-abelian homology of preordered groups by using the approach adopted in \cite{DEG} (which is itself based on the one in \cite{GJ-Hopf}), that we leave for future work.


\section{Categorical Galois structures and central extensions}

We first recall some definitions and results of categorical Galois theory which will be needed for our work. For this section, we mainly follow \cite{JG90,JG,JK}. 

\begin{definition} \label{definition Galois structure}
A \emph{Galois structure} is a system $\Gamma = (\C,\F,F,U,\E,\z)$ in which
\begin{itemize}
\item 
\begin{tikzcd}
\C \arrow[rr,bend left=15,"F"]
& \bot
& \F \arrow[ll,bend left=15,"U"]
\end{tikzcd}
is an adjunction, with unit $\eta$ and counit $\epsilon$;
\item $\E$ and $\z$ are classes of morphisms in $\C$ and $\F$, respectively,
\end{itemize}
such that
\begin{itemize}
\item $\C$ and $\F$ admit all pullbacks along morphisms from $\E$ and $\z$, respectively;
\item $\E$ and $\z$ are closed under composition, contain all isomorphisms and are pullback-stable;
\item $F(\E) \subseteq \z$;
\item $U(\z) \subseteq \E$.
\end{itemize}
\end{definition} 

Let $\Gamma = (\C,\F,F,U,\E,\z)$ be a Galois structure. For any object $B$ in $\C$, let $\E(B)$ denote the full subcategory of the slice category $\C \downarrow B$ determined by the morphisms
\begin{tikzcd}
A \arrow[r,"f"]
& B
\end{tikzcd}
in the class $\E$. Objects in this subcategory are called \emph{extensions} of $B$ and are denoted by $(A,f)$. Let $p \colon E \rightarrow B$ be any arrow in $\C$. Then $p^* \colon \E(B) \rightarrow \E(E)$ is the change-of-base functor associating, with any object $f \colon A \rightarrow B$ in $\E(B)$, the object $p^*(f) = \pi_1 \colon E \times_B A \rightarrow E$ as in the following pullback diagram:
\begin{center}
\begin{tikzcd}
E \times_B A \arrow[r,"{\pi_2}"] \arrow[d,"{\pi_1}"']
& A \arrow[d,"f"]\\
E \arrow[r,"p"']
& B.
\end{tikzcd}
\end{center}

It is well-known that a Galois structure $\Gamma = (\C,\F,F,U,\E,\z)$ induces, for any object $B \in \C$, an adjunction between the categories of extensions as in the diagram
\begin{equation} \label{adjunction induced by Galois structure}
\begin{tikzcd}
\E(B) \arrow[rr,bend left=12,"F^B"]
& \bot
& \z(F(B)) \arrow[ll,bend left=12,"U^B"]
\end{tikzcd}
\end{equation}
with unit and counit denoted by $\eta^B$ and $\epsilon^B$, respectively. The left adjoint $F^B \colon \E(B) \rightarrow \z(F(B))$ is defined, for any $(A,f) \in \E(B)$, by $F^B(A,f) = (F(A),F(f)) (\in \z(F(B)))$, while the right adjoint $U^B \colon \z(F(B)) \rightarrow \E(B)$ sends any $(X,\phi) \in \z(F(B))$ to the pullback $\eta_B^*(U(\phi))$ of $U(\phi)$ along $\eta_B$:
\begin{center}
\begin{tikzcd}
B \times_{UF(B)} U(X) \arrow[r] \arrow[d,"{\eta_B^*(U(\phi))}"']
& U(X) \arrow[d,"{U(\phi)}"]\\
B \arrow[r,"{\eta_B}"']
& UF(B).
\end{tikzcd}
\end{center}

\begin{definition}
A Galois structure $\Gamma = (\C,\F,F,U,\E,\z)$ is \emph{admissible} when, for any $B \in \C$, the counit morphism $\epsilon^B$ of the adjunction \eqref{adjunction induced by Galois structure} is an isomorphism.
\end{definition}

There is an equivalent way to define the admissibility of a Galois structure $\Gamma = (\C,\F,F,U,\E,\z)$, which corresponds to the equivalence $(1) \Leftrightarrow (2)$ in the following Proposition. Under an additional condition on the Galois structure, we also obtain the equivalence with $(3)$. This extra condition is that the counit $\epsilon$ of the adjunction $F \dashv U$ is an isomorphism. In this paper, we shall always be in such a situation, so that it will be possible to use this last equivalent definition of the admissibility.

\begin{proposition}\label{BasicAdj}
Let $\Gamma = (\C,\F,F,U,\E,\z)$ be a Galois structure such that the counit $\epsilon$ of the adjunction $F \dashv U$ is an isomorphism. Then, the following conditions are equivalent:
\begin{enumerate}
\item $\Gamma$ is admissible;
\item for any $B \in \C$, the functor $U^B \colon \z(F(B)) \rightarrow \E(B)$ is fully faithful;
\item $F$ preserves all pullbacks of the form
\begin{center}
\begin{tikzcd}
B \times_{UF(B)} U(X) \arrow[r,"{\pi_2}"] \arrow[d,"{\pi_1}"']
& U(X) \arrow[d,"{U(\phi)}"] \\
B \arrow[r,"{\eta_B}"']
& UF(B)
\end{tikzcd}
\end{center}
where $\phi \in \z$.
\end{enumerate}
\end{proposition}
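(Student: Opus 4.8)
The plan is to split the proof into the equivalence $(1)\Leftrightarrow(2)$, which is a formal property of adjunctions requiring no hypothesis, and the equivalence $(1)\Leftrightarrow(3)$, where the assumption that $\epsilon$ is an isomorphism enters. For $(1)\Leftrightarrow(2)$ I would invoke the standard fact that the counit of any adjunction $L\dashv R$ is a natural isomorphism exactly when $R$ is fully faithful, and apply it to the induced adjunction $F^B\dashv U^B$ of \eqref{adjunction induced by Galois structure}: for each fixed $B$ the component $\epsilon^B$ is an isomorphism precisely when $U^B$ is fully faithful, and quantifying over all $B\in\C$ gives the statement.

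For $(1)\Leftrightarrow(3)$, fix $B\in\C$ and $(X,\phi)\in\z(F(B))$, and write the defining pullback of $U^B(X,\phi)=\eta_B^*(U(\phi))$ with projections $\pi_1\colon B\times_{UF(B)}U(X)\to B$ and $\pi_2\colon B\times_{UF(B)}U(X)\to U(X)$. The first step is to identify the counit component $\epsilon^B_{(X,\phi)}$ explicitly: using $F^B(A,f)=(F(A),F(f))$, the universal property of the pullback and the adjunction $F\dashv U$, one checks that $\epsilon^B_{(X,\phi)}$ is the composite $\epsilon_X\circ F(\pi_2)\colon F(B\times_{UF(B)}U(X))\to X$, which lies over $F(B)$ by naturality of $\epsilon$ together with the triangle identity $\epsilon_{F(B)}\circ F(\eta_B)=\mathrm{id}$. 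The second step is to apply $F$ to the defining pullback square: since $\epsilon$ is an isomorphism by hypothesis, the triangle identity forces $F(\eta_B)=\epsilon_{F(B)}^{-1}$ to be an isomorphism, so the image square has an isomorphism along its bottom edge, and a commutative square of that shape is a pullback if and only if the opposite edge $F(\pi_2)$ is an isomorphism (commutativity then determining the fourth map). Hence $F$ preserves this particular pullback if and only if $F(\pi_2)$ is an isomorphism; and since $\epsilon_X$ is an isomorphism, $\epsilon^B_{(X,\phi)}=\epsilon_X\circ F(\pi_2)$ is an isomorphism if and only if $F(\pi_2)$ is. Combining the two steps and quantifying over all $B\in\C$ and all $(X,\phi)$ with $\phi\in\z$ gives $(1)\Leftrightarrow(3)$.

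The main work is bookkeeping: identifying $\epsilon^B_{(X,\phi)}$ with $\epsilon_X\circ F(\pi_2)$, and verifying the elementary fact that a commutative square with an isomorphism on one edge is a pullback exactly when the opposite edge is an isomorphism. The point to watch is that the hypothesis $\epsilon\cong\mathrm{id}$ must be used precisely where $F(\eta_B)$ and $\epsilon_X$ are inverted; without it, condition $(3)$ need not be equivalent to admissibility, which is exactly why the proposition is stated under that assumption.
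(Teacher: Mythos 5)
Your argument is correct. Note that the paper itself gives no proof of Proposition \ref{BasicAdj}: it is recalled as a known result from the categorical Galois theory literature (Janelidze, Janelidze--Kelly), so there is no in-paper argument to compare against. Your two steps both check out: the equivalence $(1)\Leftrightarrow(2)$ is indeed the standard ``counit iso iff right adjoint fully faithful'' fact applied to $F^B\dashv U^B$, needing no hypothesis on $\epsilon$; and for $(1)\Leftrightarrow(3)$ the identification $\epsilon^B_{(X,\phi)}=\epsilon_X\circ F(\pi_2)$ is the correct computation of the counit of the induced adjunction (the transpose of $1_{U^B(X,\phi)}$), it lies over $F(B)$ exactly as you argue via naturality of $\epsilon$ and the triangle identity, and the lemma that a commutative square with an isomorphism on one edge is a pullback precisely when the opposite edge is an isomorphism is valid (the comparison map to the pullback of $U(\phi)$'s image along the iso $F(\eta_B)=\epsilon_{F(B)}^{-1}$ is exactly $F(\pi_2)$, the other compatibility being forced by commutativity). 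You also correctly flag where the hypothesis $\epsilon\cong\mathrm{id}$ is used, namely to invert $F(\eta_B)$ and $\epsilon_X$; this matches the standing assumption under which the paper states the proposition.
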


From now on, we assume that an admissible Galois structure $\Gamma = (\C,\F,F,U,\E,\z)$ as in Proposition \ref{BasicAdj} has been fixed. Let us then recall the notions of ($\Gamma$-)\emph{trivial}, ($\Gamma$-)\emph{central} and ($\Gamma$-)\emph{normal extensions}.

\begin{definition}
A morphism $f \colon A \rightarrow B$ in $\E$ is a \emph{($\Gamma$-)trivial extension} when the square
\begin{center}
\begin{tikzcd}
A \arrow[r,"{\eta_A}"] \arrow[d,"f"']
& UF(A) \arrow[d,"{UF(f)}"]\\
B \arrow[r,"{\eta_B}"']
& UF(B)
\end{tikzcd}
\end{center}
is a pullback. Equivalently, $f \colon A \rightarrow B$ in $\E$ is a \emph{($\Gamma$-)trivial extension} when $f$ lies in the image of the functor $U^B \colon \z(F(B)) \rightarrow \E(B)$.
\end{definition}

Remark that the equivalence in the above definition follows directly from the admissibility of the Galois structure $\Gamma$.

By a \emph{monadic extension} we mean
a morphism $p \colon E \rightarrow B$ in $\E$ that is also an \emph{effective descent morphism} (see \cite{JST,JT}, for instance).


\begin{definition}
A morphism $f \colon A \rightarrow B$ in $\E$ is a \emph{($\Gamma$-)central extension} when there exists a monadic extension $p \colon E \rightarrow B$ such that $p^*(f) \colon E \times_B A \rightarrow E$ is a $(\Gamma\text{-})$trivial extension, that is, the following diagram
\begin{center}
\begin{tikzcd}[row sep = large,column sep=large]
E \times_B A \arrow[r,"{\eta_{E \times_B A}}"] \arrow[d,"{p^*(f) = \pi_1}"']
& UF(E \times_B A) \arrow[d,"{UF(\pi_1)}"]\\
E \arrow[r,"{\eta_E}"']
& UF(E)
\end{tikzcd}
\end{center}
is a pullback, where $\pi_1$ is the first projection in the pullback
\begin{center}
\begin{tikzcd}
E \times_B A \arrow[r,"{\pi_2}"] \arrow[d,"{p^*(f) = \pi_1}"']
& A \arrow[d,"f"]\\
E \arrow[r,"p"']
& B.
\end{tikzcd}
\end{center}
\end{definition}

\begin{definition}
An arrow $f \colon A \rightarrow B$ in $\E$ is a \emph{($\Gamma$-)normal extension} when $f$ is a monadic extension and $f^*(f)$ is a $(\Gamma\text{-})$trivial extension.
\end{definition}

Clearly, any trivial extension is central and any normal extension is central. The admissibility of the Galois structure $\Gamma$ also guarantees that any trivial extension is normal.

\section{The reflector $C \colon \mathsf{PreOrdGrp} \rightarrow \mathsf{PreOrdAb}$ and its induced admissible Galois structure} \label{Functor C}

The thorough study of the properties of the category $\mathsf{PreOrdGrp}$ carried out in \cite{CMFM} provides in particular a description of the limits and the colimits of this category that will be needed for our work.  
We briefly recall these constructions for the reader's convenience.

First, the product of two preordered groups $(G,P_G)$ and $(H,P_H)$ is given by the preordered group $(G \times H,P_G \times P_H)$. Then the equalizer of two parallel arrows
\begin{equation} \label{parallel arrows}
\begin{tikzcd}
(G,P_G) \arrow[r,shift left,"{(f,\bar{f})}"] \arrow[r,shift right,"{(g,\bar{g})}"']
& (H,P_H)
\end{tikzcd}
\end{equation}
is given by $((E,P_E),(e,\bar{e}))$ where $(E,e)$ is the equalizer of $f$ and $g$ in the category $\mathsf{Grp}$ of groups and $P_E = E \cap P_G$, that is, the following square is a pullback in $\mathsf{Mon}$:
\begin{center}
\begin{tikzcd}
P_E \arrow[r,tail,"{\bar{e}}"] \arrow[d,tail]
& P_G \arrow[d,tail]\\
E \arrow[r,tail,"e"']
& G.
\end{tikzcd}
\end{center}
It is not difficult to see that this is equivalent to saying that $(E,e)$ is the equalizer of $f$ and $g$ in $\mathsf{Grp}$ and $(P_E,\bar{e})$ the equalizer of $\bar{f}$ and $\bar{g}$ in $\mathsf{Mon}$. As a consequence, the pullback of two morphisms $(f,\bar{f}) \colon (G,P_G) \rightarrow (H,P_H)$ and $(g,\bar{g}) \colon (C,P_C) \rightarrow (H,P_H)$ with the same codomain $(H,P_H)$ is given by $((P,P_P),(p_1,\bar{p}_1),(p_2,\bar{p}_2))$ where $(P,p_1,p_2)$ is the pullback of $f$ and $g$ in $\mathsf{Grp}$ and $(P_P,\bar{p}_1,\bar{p}_2)$ the pullback of $\bar{f}$ and $\bar{g}$ in $\mathsf{Mon}$. More generally, all limits of $\mathsf{PreOrdGrp}$ are computed “componentwise”. Note that in particular the kernel $(K,P_K)$ of a morphism $(f,\bar{f}) \colon (G,P_G) \rightarrow (H,P_H)$ in $\mathsf{PreOrdGrp}$ can also be computed by taking $K$ to be the kernel $\mathsf{Ker}(f)$ of $f$ in $\mathsf{Grp}$ and $P_K$ the intersection $K \cap P_G$.

Colimits in $\mathsf{PreOrdGrp}$ are a bit more difficult to describe, in general, since they are not simply computed “componentwise”. We shall be mainly interested in coequalizers: given two parallel arrows as in \eqref{parallel arrows} their coequalizer is given by $((Q,P_Q),(q,\bar{q}))$ with $(Q,q)$ the coequalizer of $f$ and $g$ in $\mathsf{Grp}$ and $P_Q = q(P_H)$ (i.e. $\bar{q}$ is surjective):
\begin{center}
\begin{tikzcd}
P_H \arrow[r,two heads,"{\bar{q}}"] \arrow[d,tail]
& P_Q \arrow[d,tail]\\
H \arrow[r,two heads,"q"']
& Q.
\end{tikzcd}
\end{center}
In particular the cokernel of a morphism $(f,\bar{f}) \colon (G,P_G) \rightarrow (H,P_H)$ in $\mathsf{PreOrdGrp}$ is then given by a pair $(q,\bar{q})$ making the diagram above commute, with $q$ the cokernel of $f$ in $\mathsf{Grp}$ and $\bar{q}$ surjective.

As recalled in the Introduction, an important result of \cite{CMFM} is that the category $\mathsf{PreOrdGrp}$ of preordered groups is \emph{normal}. 
Although the category $\mathsf{PreOrdGrp}$ is not \emph{Barr-exact} \cite{Barr} (see \cite[Remark 2.6]{CMFM}) its \emph{effective descent morphisms} (see \cite{JST}, for instance) are easy to characterize. In this context effective descent morphisms coincide with normal epimorphisms, so that an effective descent morphism in the category $\mathsf{PreOrdGrp}$ of preordered groups has a fairly simple description: it is a morphism $(f,\bar{f}) \colon (G,P_G) \rightarrow (H,P_H)$ as in \eqref{morphism in PreOrdGrp} where both $f$ and $\bar{f}$ are surjective. Epimorphisms in $\mathsf{PreOrdGrp}$ are given by morphisms $(f,\bar{f})$ where only $f$ is required to be surjective, and monomorphisms are morphisms $(f,\bar{f})$ where $f$ is injective (which, in turn, implies that also $\bar{f}$ is injective).

Let us then denote by $\mathsf{PreOrdAb}$ the full subcategory of $\mathsf{PreOrdGrp}$ whose objects $(G,P_G)$ are preordered abelian groups, i.e. such that $G$ is abelian.
\begin{proposition}
There is an adjunction
\begin{equation} \label{adjunction PreOrdGrp_PreOrdAb}
\begin{tikzcd}
\mathsf{PreOrdGrp} \arrow[rr,bend left=9,"C"]
& \bot
& \mathsf{PreOrdAb} \arrow[ll,hook',bend left=9,"V"]
\end{tikzcd}
\end{equation}
between the category $\mathsf{PreOrdGrp}$ of preordered groups and its full subcategory $\mathsf{PreOrdAb}$ of preordered abelian groups, where the right adjoint $V$ is the inclusion functor and the left adjoint $C$ is defined, for any $(G,P_G) \in \mathsf{PreOrdGrp}$, by $C(G,P_G) = (G/[G,G],\eta_G(P_G))$
\begin{center}
\begin{tikzcd}
P_G \arrow[r,two heads,"\bar{\eta}_G"] \arrow[d,tail]
& \eta_G(P_G) \arrow[d,tail] \\
G \arrow[r,two heads,"\eta_G"']
& G/[G,G] = ab(G),
\end{tikzcd}
\end{center}
where $\eta_G(P_G)$ is the direct image of $P_G$ along the quotient $\eta_G$ of $G$ by its derived subgroup $[G,G]$.
\end{proposition}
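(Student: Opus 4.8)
The plan is to verify that, for every preordered group $(G,P_G)$, the morphism $(\eta_G,\bar{\eta}_G)\colon (G,P_G)\to V(C(G,P_G))$ displayed in the statement is a universal arrow from $(G,P_G)$ to the inclusion functor $V$; this is exactly what is needed to conclude that $C$ is left adjoint to $V$, with $(\eta,\bar{\eta})$ as unit. (Since $V$ is fully faithful, the counit of this adjunction will automatically be an isomorphism, which is the setting that will be assumed when speaking of admissibility.)

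First I would check that $C$ is well defined on objects and that $(\eta_G,\bar{\eta}_G)$ is a morphism of $\mathsf{PreOrdGrp}$. As $\eta_G\colon G\twoheadrightarrow G/[G,G]=ab(G)$ is a group homomorphism and $P_G$ is a submonoid of $G$, the direct image $\eta_G(P_G)$ is a submonoid of $ab(G)$; and since $ab(G)$ is abelian, the requirement that this submonoid be closed under conjugation is vacuous, so $(ab(G),\eta_G(P_G))$ is indeed an object of $\mathsf{PreOrdAb}$. By construction $\eta_G$ restricts to a surjective monoid morphism $\bar{\eta}_G\colon P_G\twoheadrightarrow\eta_G(P_G)$ making the defining square commute, whence $(\eta_G,\bar{\eta}_G)$ is a morphism in $\mathsf{PreOrdGrp}$. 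It is convenient to record here a general fact used below: because the vertical inclusions $P_G\rightarrowtail G$ are monomorphisms, a morphism $(f,\bar{f})$ of $\mathsf{PreOrdGrp}$ is completely determined by its underlying group homomorphism $f$, the only constraint being $f(P_G)\subseteq P_H$, and $\bar{f}$ is then the induced corestriction.

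Next I would establish the universal property. Let $(A,P_A)$ be a preordered abelian group and $(f,\bar{f})\colon (G,P_G)\to (A,P_A)$ a morphism of preordered groups. Since $A$ is abelian, the universal property of the abelianization $ab$ in $\mathsf{Grp}$ provides a unique group homomorphism $g\colon ab(G)\to A$ with $g\circ\eta_G=f$. Because $(f,\bar{f})$ is a morphism of preordered groups we have $g(\eta_G(P_G))=f(P_G)\subseteq P_A$, so $g$ corestricts to a monoid morphism $\bar{g}\colon \eta_G(P_G)\to P_A$; thus $(g,\bar{g})$ is a morphism in $\mathsf{PreOrdGrp}$, and it satisfies $(g,\bar{g})\circ(\eta_G,\bar{\eta}_G)=(f,\bar{f})$ (for the positive-cone component, evaluate at $p\in P_G$: one has $\bar{g}(\bar{\eta}_G(p))=g(\eta_G(p))=f(p)=\bar{f}(p)$). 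For uniqueness, any $(g',\bar{g}')$ with $(g',\bar{g}')\circ(\eta_G,\bar{\eta}_G)=(f,\bar{f})$ must satisfy $g'\circ\eta_G=f$, hence $g'=g$ by the universal property of $ab$, and then $\bar{g}'=\bar{g}$ by the determinacy fact recorded above. This proves the proposition.

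There is no genuine obstacle here: the content is that the abelianization adjunction between $\mathsf{Grp}$ and $\mathsf{Ab}$ lifts along the ``positive cone'' data. The only points deserving any attention are that $\eta_G(P_G)$ is again a legitimate positive cone (immediate, since $ab(G)$ is abelian) and that the positive-cone component $\bar{g}$ of the comparison morphism is forced (immediate, since $P_A\rightarrowtail A$ is monic). One could alternatively deduce reflectivity from an adjoint functor theorem, observing via the componentwise description of limits recalled above that $\mathsf{PreOrdAb}$ is closed in $\mathsf{PreOrdGrp}$ under limits and under subobjects, but the explicit construction is more transparent and exhibits the unit directly.
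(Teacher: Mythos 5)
Your proof is correct and follows essentially the same route as the paper: both verify directly that $(\eta_G,\bar{\eta}_G)$ is a universal arrow to the inclusion, using the universal property of the abelianization in $\mathsf{Grp}$ for the group component and then inducing the positive-cone component (the paper via the strong-epi/mono orthogonality of $\bar{\eta}_G$ against $P_A\rightarrowtail A$, you via the equivalent direct observation that $g(\eta_G(P_G))=f(P_G)\subseteq P_A$ and that $\bar{g}$ is determined by $g$). No gaps; your extra check that $(ab(G),\eta_G(P_G))$ is a legitimate object of $\mathsf{PreOrdAb}$ is a welcome detail the paper leaves implicit.
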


\begin{proof}
Let $(G,P_G)$ be any preordered group. Then the $(G,P_G)$-component of the unit of the adjunction \eqref{adjunction PreOrdGrp_PreOrdAb} is given by the above morphism $(\eta_G,\bar{\eta}_G)$. Indeed, consider any preordered abelian group $(A,P_A)$ and any morphism $(f,\bar{f}) \colon (G,P_G) \rightarrow (A,P_A)$ in $\mathsf{PreOrdGrp}$:
\begin{center}
\begin{tikzcd}
P_G \arrow[rr,two heads,"\bar{\eta}_G"] \arrow[dr,"\bar{f}"'] \arrow[dd,tail]
& & \eta_G(P_G) \arrow[dl,dotted,"\bar{g}"] \arrow[dd,tail,"{i_{ab(G)}}"] \\
 & P_A \arrow[dd,tail,near start,"i_A"] & \\
G \arrow[rr,two heads,near start,"\eta_G"] \arrow[dr,"f"'] 
& & ab(G). \arrow[dl,dotted,"g"] \\
 & A & 
\end{tikzcd}
\end{center}
Then, the universal property of the abelianization of $G$ yields a unique arrow $g \colon ab(G) \rightarrow A$ such that $g \cdot \eta_G = f$ in the category $\mathsf{Grp}$. Knowing that $\bar{\eta}_G$ is a strong epimorphism in the category $\mathsf{Mon}$ of monoids, there is moreover a unique monoid morphism $\bar{g} \colon \eta_G(P_G) \rightarrow P_A$ making the following diagram commute:
\begin{center}
\begin{tikzcd}
P_G \arrow[r,two heads,"\bar{\eta}_G"] \arrow[d,"\bar{f}"']
& \eta_G(P_G) \arrow[d,"{g \cdot i_{ab(G)}}"] \arrow[dl,dotted,"\bar{g}"] \\
P_A \arrow[r,tail,"i_A"']
& A.
\end{tikzcd}
\end{center}
Accordingly, there exists a unique morphism $(g,\bar{g}) \colon (ab(G),\eta_G(P_G)) \rightarrow (A,P_A)$ in $\mathsf{PreOrdGrp}$ such that $(g,\bar{g}) \cdot (\eta_G,\bar{\eta}_G) = (f,\bar{f})$. 
\end{proof}

\begin{corollary}
If $\E_C$ denotes the class of normal epimorphisms in $\mathsf{PreOrdGrp}$ and $\z_C$ the class of normal epimorphisms in $\mathsf{PreOrdAb}$, then 
\begin{equation} \label{Galois structure}
\Gamma_C = (\mathsf{PreOrdGrp}, \mathsf{PreOrdAb}, C, V, \E_C, \z_C)
\end{equation}
is a Galois structure.
\end{corollary}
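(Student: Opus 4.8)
The plan is to verify, one by one, the four axioms in Definition \ref{definition Galois structure} for the system $\Gamma_C = (\mathsf{PreOrdGrp}, \mathsf{PreOrdAb}, C, V, \E_C, \z_C)$. The adjunction $C \dashv V$ is already available from the preceding proposition, so the only work is checking the conditions on the two classes $\E_C$ and $\z_C$ of normal epimorphisms and their compatibility with the functors.

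First I would deal with the pullback-existence and stability conditions. Since $\mathsf{PreOrdGrp}$ and $\mathsf{PreOrdAb}$ are both complete (by \cite{CMFM}), all pullbacks exist, in particular those along morphisms of $\E_C$ and $\z_C$. For pullback-stability of normal epimorphisms I would argue componentwise: a normal epimorphism in $\mathsf{PreOrdGrp}$ is a morphism $(f,\bar f)$ with both $f$ and $\bar f$ surjective, pullbacks are computed componentwise (as recalled at the start of Section \ref{Functor C}, the underlying group is the pullback in $\mathsf{Grp}$ and the positive cone is the pullback in $\mathsf{Mon}$), and surjections are pullback-stable in $\mathsf{Grp}$ and in $\mathsf{Mon}$ (the latter because a pullback of monoids sits inside the pullback of the underlying sets, and for a surjection of monoids every preimage point already lifts). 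Hence the pullback $(p_1,\bar p_1)$ of $(f,\bar f)$ along any morphism again has both components surjective, so it lies in $\E_C$; the same reasoning applies verbatim in $\mathsf{PreOrdAb}$, which is closed under the relevant limits in $\mathsf{PreOrdGrp}$. Closure under composition and containment of all isomorphisms are immediate, since composites and isomorphisms are computed componentwise and surjectivity is preserved.

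Next I would check the two functorial inclusions. For $C(\E_C) \subseteq \z_C$: given a normal epimorphism $(f,\bar f)\colon (G,P_G)\twoheadrightarrow (H,P_H)$, applying $C$ yields $(\bar f',\overline{\bar f'})\colon (ab(G),\eta_G(P_G)) \to (ab(H),\eta_H(P_H))$, whose group component is the induced map $ab(G)\to ab(H)$; this is surjective because $f$ is surjective and abelianization is a functor preserving epimorphisms (the composite $G \twoheadrightarrow H \twoheadrightarrow ab(H)$ factors through $\eta_G$, and the factoring map has image containing the generating set $\eta_H(H)$). The positive-cone component sends $\eta_G(P_G)$ onto the direct image of $\eta_G(P_G)$, which one checks equals $\eta_H(P_H)$ since $\bar f(P_G)=P_H$; thus both components of $C(f,\bar f)$ are surjective and it lies in $\z_C$. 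For $V(\z_C)\subseteq \E_C$: the inclusion functor $V$ is the identity on underlying data, so a normal epimorphism of preordered abelian groups is in particular a morphism of preordered groups with both components surjective, hence in $\E_C$; here I would just note that a regular epimorphism between abelian preordered groups is still a regular epimorphism in the ambient category, which again follows from the componentwise description of coequalizers recalled above.

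I do not expect any serious obstacle: every step reduces to the componentwise description of limits and colimits in $\mathsf{PreOrdGrp}$ together with elementary facts about surjections in $\mathsf{Grp}$ and $\mathsf{Mon}$. The one point deserving a line of care is the pullback-stability of surjections of monoids, since $\mathsf{Mon}$ is not a regular category in the way $\mathsf{Grp}$ is; but as indicated above this is handled directly by a diagram chase on elements, so the corollary follows.
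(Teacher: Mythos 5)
Your verification is correct, and it is exactly the routine check the paper leaves implicit: the corollary is stated without proof because all four axioms reduce, as you show, to the componentwise description of (co)limits in $\mathsf{PreOrdGrp}$, the characterization of normal epimorphisms as morphisms $(f,\bar f)$ with both components surjective, and the elementary stability of surjections in $\mathsf{Grp}$ and $\mathsf{Mon}$. No gaps; your extra care about pullback-stability of monoid surjections and about $C(f,\bar f)$ having surjective cone component (via $\eta_H(f(P_G))=\eta_H(P_H)$) is exactly what is needed.
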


We will now prove that this Galois structure is actually admissible. In order to do this, we will use the following result which is a reformulation of a result due to Janelidze and Kelly \cite{JK}:

\begin{proposition} \label{prop JK}
Let $\C$ be a normal category. Let $\Gamma = (\C,\F,F,U,\E,\z)$ be a Galois structure, where $\E$ is the class of normal epimorphisms in $\C$ and $\F$ a full reflective subcategory of $\C$ closed under subobjects and quotients. If the lattice $\mathsf{Norm(\C)}$ of normal subobjects in $\C$ is modular, then the Galois structure $\Gamma$ is admissible.
\end{proposition}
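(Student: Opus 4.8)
The plan is to verify condition~$(3)$ of Proposition~\ref{BasicAdj}, which applies because $\F$ being a \emph{full} reflective subcategory makes $U$ fully faithful, hence $\epsilon$ an isomorphism. So fix $B\in\C$ and $\phi\in\z$, form the pullback $P=B\times_{UF(B)}U(X)$ with projections $\pi_1\colon P\to B$ and $\pi_2\colon P\to U(X)$, and show that $F$ preserves this square. First I would record two observations that use only that $\F$ is closed under subobjects and under quotients. (i) For every object $Z$ the unit $\eta_Z\colon Z\to UF(Z)$ is a \emph{normal epimorphism}: factoring it as a regular epimorphism followed by a monomorphism through its image, the image lies in $\F$ (closure under subobjects), and the universal property of $\eta_Z$ forces that monomorphism to be invertible; consequently $\ker(\eta_Z)$ is the \emph{smallest} normal subobject $V$ of $Z$ with $Z/V\in\F$. (ii) Writing $r(Z):=\ker(\eta_Z)$, the assignment $r$ commutes with direct images along normal epimorphisms, i.e. $r(Z'')=q(r(Z))$ for every normal epimorphism $q\colon Z\twoheadrightarrow Z''$: one inclusion holds because $Z''/q(r(Z))\cong Z/(r(Z)\vee\ker q)$ is a quotient of $Z/r(Z)\in\F$, the other by the minimality in~(i) applied to $q^{-1}(r(Z''))$. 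Since $U(\phi)\in\E$ (axiom $U(\z)\subseteq\E$) and $\eta_B\in\E$ by~(i), both $\pi_1$ and $\pi_2$ are normal epimorphisms in the normal category $\C$.

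Next I would reduce everything to the single equality $\ker(\eta_P)=\ker(\pi_2)$, i.e. to the claim that $\pi_2$ is, up to isomorphism, the unit $\eta_P$. Granting this, applying $F$ to the defining square of $P$ produces a commutative square whose lower edge $F(\eta_B)$ is an isomorphism (triangle identity together with $\epsilon$ iso); such a square is a pullback precisely when its upper edge $F(\pi_2)$ is an isomorphism, and $F(\pi_2)$ — a normal epimorphism, since $F(\E)\subseteq\z$ — is an isomorphism as soon as $\eta_P$ and $\pi_2$ are isomorphic normal epimorphisms out of $P$. The inclusion $\ker(\eta_P)\subseteq\ker(\pi_2)$ is automatic, because $U(X)\in\F$ forces $\pi_2$ to factor through $\eta_P$. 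Thus the whole weight of the argument is the reverse inclusion $\ker(\pi_2)\subseteq\ker(\eta_P)$.

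For that reverse inclusion I would argue inside the (modular) lattice of normal subobjects of $P$, writing $N:=\ker(\pi_2)$ and $L:=\ker(\pi_1)$. Applying (ii) to $q=\pi_1$ gives $\pi_1(r(P))=r(B)=\ker(\eta_B)$; on the other hand $\pi_1$ restricts to an isomorphism $N\xrightarrow{\ \sim\ }\ker(\eta_B)$, so $\pi_1(N)=\ker(\eta_B)$ as well. Taking preimages along the normal epimorphism $\pi_1$, whose kernel is $L$, turns both identities into $r(P)\vee L=\pi_1^{-1}(\ker(\eta_B))=N\vee L$. Moreover $N\wedge L=0$, because the comparison morphism $(\pi_1,\pi_2)\colon P\rightarrowtail B\times U(X)$ is a monomorphism, and $r(P)\le N$ by the easy inclusion of the previous paragraph. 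The modular law then closes the argument in one line:
\[ r(P)=r(P)\vee(L\wedge N)=(r(P)\vee L)\wedge N=(N\vee L)\wedge N=N. \]

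The step I expect to be the real obstacle is not this final lattice manipulation — modularity intervenes only in that last display, and cleanly — but the two structural inputs that make it meaningful: checking that the units $\eta_Z$ are normal epimorphisms, so that the ``$\F$-radical'' $r(Z)=\ker(\eta_Z)$ is at our disposal together with the correspondence and homomorphism theorems of the normal category $\C$ (images and preimages of normal subobjects along normal epimorphisms, the isomorphism theorems); and establishing the identity $r(P)\vee L=N\vee L$, which is precisely the place where closure of $\F$ under quotients enters, via the commutation of $r$ with the direct image along $\pi_1$. Everything else is routine diagram chasing with pullbacks and the Galois-structure axioms $U(\z)\subseteq\E$ and $F(\E)\subseteq\z$.
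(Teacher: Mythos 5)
Your proof is correct and is essentially the paper's: both reduce admissibility (via condition (3) of Proposition \ref{BasicAdj}) to the single equality $\ker(\eta_P)=\ker(\pi_2)$ and obtain it from the modular law applied to the three normal subobjects $\ker(\eta_P)\le\ker(\pi_2)$ and $\ker(\pi_1)$, with $\ker(\pi_1)\wedge\ker(\pi_2)=0$ and with closure of $\F$ under subobjects and quotients entering exactly where you put it. The only cosmetic difference is that the paper packages your identity $\ker(\eta_P)\vee\ker(\pi_1)=\ker(\pi_2)\vee\ker(\pi_1)$ as the statement that the naturality square of the unit at $\pi_1$ is a pushout, so that $\ker(\eta_B\cdot\pi_1)=\ker(\eta_P)\vee\ker(\pi_1)$, which is the same use of the fact that your radical $r(-)$ commutes with direct images along normal epimorphisms.
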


\begin{proof}
Consider any pullback in $\C$ of the following form
\begin{equation} \label{diagram1JK}
\begin{tikzcd}
P \arrow[r,two heads,"\pi_2"] \arrow[d,two heads,"\pi_1"']
& A \arrow[d,two heads,"\phi"] \\
B \arrow[r,two heads,"\eta_B"']
& F(B)
\end{tikzcd}
\end{equation}
where $\eta_B$ is the $B$-component of the unit of the adjunction $F \dashv U$, $A$ an object of the subcategory $\F$ and $\phi$ any morphism in the class $\E$. We write $s \colon S \rightarrowtail P$ and $t \colon T \rightarrowtail P$ for the kernels of $\pi_1$ and $\pi_2$, respectively. We need to prove that this pullback is preserved by the reflector $F$. In order to do this, we consider the next commutative diagram
\begin{equation} \label{diagram2JK}
\begin{tikzcd} [column sep=large,row sep=large]
R \vee S \arrow[dr,tail]
& S \arrow[d,tail,"s"] \arrow[l,tail]
& & \\
R \arrow[u,tail] \arrow[r,tail,"r"] \arrow[d,dotted,tail,"m"]
& P \arrow[r,two heads,"\eta_P"] \arrow[rr,bend left,two heads,"\pi_2"] \arrow[dr,two heads,"q"] \arrow[d,two heads,"\pi_1"']
& F(P) \arrow[r,two heads,"\psi"] \arrow[d,two heads,"F(\pi_1)"]
& A \arrow[d,two heads,"\phi"] \\
T \arrow[uu,bend left,dotted,tail,"n"] \arrow[ur,tail,"t"']
& B \arrow[r,two heads,"\eta_B"'] 
& F(B) \arrow[r,equal]
& F(B)
\end{tikzcd}
\end{equation}
where $r \colon R \rightarrowtail P$ is the kernel of the $P$-component $\eta_P$ of the unit of the adjunction $F \dashv U$, $R \vee S$ is the supremum of $R$ and $S$ in $\mathsf{Norm(\C)}$, $q$ is the composite $\eta_B \cdot \pi_1 = F(\pi_1) \cdot \eta_P$, and $\psi$ is the unique morphism induced by the universal property of $\eta_P$ such that $\psi \cdot \eta_P = \pi_2$. We compute that $\pi_2 \cdot r = \psi \cdot \eta_P \cdot r = 0$ so that, by the universal property of kernels, there exists a unique arrow $m \colon R \rightarrow T$ such that $t \cdot m = r$, hence $R \le T$. The assumption that $\F$ is a full reflective subcategory of $\C$ closed under subobjects and quotients implies that the middle square of the above diagram \eqref{diagram2JK} is a pushout. As a consequence, the supremum $R \vee S$ exists, and it is 
the kernel of the morphism $q$. Accordingly, since $q \cdot t = F(\pi_1) \cdot \eta_P \cdot t = \phi \cdot \psi \cdot \eta_P \cdot t = \phi \cdot \pi_2 \cdot t = 0$, there exists a unique morphism $n \colon T \rightarrow R \vee S$ such that $\ker q \cdot n = t$, so that $T \le R \vee S$ in $\mathsf{Norm(\C)}$. Observe then that
\[R = R \vee \{0\} = R \vee (S \wedge T) = (R \vee S) \wedge T = T,\]
where the second equality follows from the fact that $\pi_1$ and $\pi_2$ are jointly monomorphic, the third equality by modularity of $\mathsf{Norm(\C)}$ (since $R \le T$), and the last one from the fact that $T \le R \vee S$. This means that $\ker(\eta_P) = \ker(\pi_2)$,  and the induced morphism $\psi$ is then an isomorphism. If we apply the reflector $F$ to the pullback \eqref{diagram1JK}, we then get a pullback in $\F$, and the Galois structure $\Gamma$ is therefore admissible, as desired. 
\end{proof}

We will now apply this Proposition to the adjunction we are interested in. It just remains to prove that the lattice $\mathsf{Norm(PreOrdGrp)}$ of normal subobjects in $\mathsf{PreOrdGrp}$ is modular. In order to do this, we first need to describe the supremum of two normal subobjects in $\mathsf{PreOrdGrp}$.

\begin{lemma}
Let $(A,P_A)$ and $(B,P_B)$ be two normal subobjects in $\mathsf{PreOrdGrp}$ of a preordered group $(G,P_G)$. The supremum of $(A,P_A)$ and $(B,P_B)$ in $\mathsf{Norm(PreOrdGrp)}$ is given by
\[(A,P_A) \vee (B,P_B) = (A \cdot B, (A \cdot B) \cap P_G)\]
where $A \cdot B = \{a + b \mid a \in A \ \text{and} \ b \in B\}$ is the supremum of the normal subgroups $A$ and $B$ in the category $\mathsf{Grp}$ of groups.
\end{lemma}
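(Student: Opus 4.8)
The plan is to reduce the statement to the classical description of the lattice of normal subgroups of $G$. First I would use the description of kernels in $\mathsf{PreOrdGrp}$ recalled above to observe that a normal subobject of $(G,P_G)$ is, up to isomorphism, exactly a pair $(N, N\cap P_G)$ with $N\trianglelefteq G$ a normal subgroup: on the one hand the kernel of any morphism out of $(G,P_G)$ has this shape, and on the other hand every normal subgroup $N\trianglelefteq G$ occurs, being the kernel of the cokernel of the monomorphism $(N,N\cap P_G)\rightarrowtail(G,P_G)$ (note that $N\cap P_G$ is automatically a submonoid of $G$ closed under conjugation, since both $N$ and $P_G$ are). In particular $P_A=A\cap P_G$ and $P_B=B\cap P_G$, and the assignment $N\mapsto(N,N\cap P_G)$ is an isomorphism of posets from the lattice of normal subgroups of $G$ onto the lattice of normal subobjects of $(G,P_G)$, the order being inclusion on both sides, since a morphism of such subobjects over $(G,P_G)$ is exactly an inclusion of the underlying normal subgroups of $G$.

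Granting this, the computation is short. I would first check that $(A\cdot B,(A\cdot B)\cap P_G)$ is an upper bound of $(A,P_A)$ and $(B,P_B)$: since $A\cdot B$ is the join of $A$ and $B$ in the lattice of normal subgroups of $G$ we have $A\subseteq A\cdot B$ and $B\subseteq A\cdot B$, hence $P_A=A\cap P_G\subseteq(A\cdot B)\cap P_G$ and likewise $P_B\subseteq(A\cdot B)\cap P_G$, so the evident inclusions define morphisms in $\mathsf{PreOrdGrp}$ over $(G,P_G)$. Then I would check minimality: if $(D,D\cap P_G)$ is any normal subobject of $(G,P_G)$ dominating both $(A,P_A)$ and $(B,P_B)$, then $A\subseteq D$ and $B\subseteq D$ as subgroups of $G$, so $A\cdot B\subseteq D$ because $D$ is a subgroup, and therefore $(A\cdot B)\cap P_G\subseteq D\cap P_G$; thus $(A\cdot B,(A\cdot B)\cap P_G)\le(D,D\cap P_G)$. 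This yields $(A,P_A)\vee(B,P_B)=(A\cdot B,(A\cdot B)\cap P_G)$ in $\mathsf{Norm(PreOrdGrp)}$.

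The only genuine obstacle is the first step, namely pinning down that normal subobjects of $(G,P_G)$ are precisely the pairs $(N,N\cap P_G)$ for $N\trianglelefteq G$, so that computing suprema of normal subobjects amounts to computing suprema of normal subgroups; once this identification is in place nothing remains but the two routine verifications above. This same identification is also exactly what will make the modularity of $\mathsf{Norm(PreOrdGrp)}$ in the following Proposition an immediate consequence of the classical modularity of the lattice of normal subgroups of $G$.
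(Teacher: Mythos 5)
Your proof is correct and follows essentially the same route as the paper: identify normal subobjects of $(G,P_G)$ with pairs $(N,N\cap P_G)$ for $N$ a normal subgroup of $G$ (the paper uses this implicitly via the description of kernels and cokernels in $\mathsf{PreOrdGrp}$), then verify that $(A\cdot B,(A\cdot B)\cap P_G)$ is an upper bound and is below any common upper bound using $A\cdot B=A\vee B$ in $\mathsf{Grp}$. Your explicit justification of that identification, and the remark that it also yields modularity, only make explicit what the paper leaves tacit.
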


\begin{proof}
We first note that $(A \cdot B,(A \cdot B) \cap P_G)$ is a normal subobject of $(G,P_G)$. Indeed, $A \cdot B$ is a normal subgroup of $G$ and the square
\begin{center}
\begin{tikzcd}
(A \cdot B) \cap P_G \arrow[r,tail] \arrow[d,tail]
& P_G \arrow[d,tail] \\
A \cdot B \arrow[r,tail]
& G
\end{tikzcd}
\end{center}
is a pullback in the category $\mathsf{Mon}$ of monoids. It is also clear that $(A,P_A) \le (A \cdot B,(A \cdot B) \cap P_G)$ and that $(B,P_B) \le (A \cdot B,(A \cdot B) \cap P_G)$ (since $P_A = A \cap P_G$ and $P_B = B \cap P_G$). Consider next that we have another normal subobject $(N,P_N)$ of $(G,P_G)$ such that $(A,P_A) \le (N,P_N)$ and $(B,P_B) \le (N,P_N)$. Then of course $A \cdot B \le N$ since $A \cdot B = A \vee B$ in $\mathsf{Grp}$. As a consequence, $(A \cdot B,(A \cdot B) \cap P_G) \le (N,P_N)$. 
\end{proof}

\begin{proposition} \label{Modularity of PreOrdGrp}
The lattice $\mathsf{Norm(PreOrdGrp)}$ of normal subobjects in the category $\mathsf{PreOrdGrp}$ of preordered groups is modular: for any triple $(A,P_A)$, $(B,P_B)$ and $(C,P_C)$ of normal subobjects in $\mathsf{PreOrdGrp}$ of a given preordered group $(G,P_G)$, such that $(C,P_C) \le (A,P_A)$, we have that
\[(A,P_A) \wedge ((B,P_B) \vee (C,P_C)) = ((A,P_A) \wedge (B,P_B)) \vee (C,P_C).\]
\end{proposition}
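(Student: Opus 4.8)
The plan is to reduce modularity in $\mathsf{Norm(PreOrdGrp)}$ to modularity in $\mathsf{Norm(Grp)}$ (which is classical, the lattice of normal subgroups of a group being modular) by exploiting the explicit componentwise descriptions of $\wedge$ and $\vee$ supplied just above. Recall that the meet of two normal subobjects is the pullback (computed componentwise), so $(X,P_X) \wedge (Y,P_Y) = (X \cap Y, (X \cap Y) \cap P_G)$, while by the preceding Lemma $(X,P_X) \vee (Y,P_Y) = (X \cdot Y, (X \cdot Y) \cap P_G)$. Thus every normal subobject occurring in the modular law has the form $(S, S \cap P_G)$ for a normal subgroup $S \trianglelefteq G$, and is therefore \emph{determined by its first component} $S$; moreover $(S, S\cap P_G) \le (T, T\cap P_G)$ in $\mathsf{PreOrdGrp}$ if and only if $S \le T$ in $\mathsf{Grp}$. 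So the assignment $S \mapsto (S, S\cap P_G)$ is an order-isomorphism from the sublattice of $\mathsf{Norm(Grp)}$ onto $\mathsf{Norm(PreOrdGrp)}(G,P_G)$ that preserves both $\wedge$ and $\vee$.

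Concretely, I would first verify that $(C,P_C) \le (A,P_A)$ forces $P_C = C \cap P_G$ and $P_A = A \cap P_G$ with $C \trianglelefteq A \trianglelefteq G$ (normal subobjects of $(G,P_G)$ are exactly the kernels, hence of the stated form), and similarly $P_B = B \cap P_G$. Then I would compute the left-hand side: $(B,P_B) \vee (C,P_C) = (B\cdot C,\, (B\cdot C)\cap P_G)$, and meeting with $(A,P_A)$ gives $\bigl(A \cap (B\cdot C),\ A\cap(B\cdot C)\cap P_G\bigr)$. For the right-hand side: $(A,P_A)\wedge(B,P_B) = (A\cap B,\ (A\cap B)\cap P_G)$, and then joining with $(C,P_C)$ gives $\bigl((A\cap B)\cdot C,\ ((A\cap B)\cdot C)\cap P_G\bigr)$. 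Since $C \le A$, modularity of $\mathsf{Norm(Grp)}$ yields $A \cap (B\cdot C) = (A\cap B)\cdot C$ as subgroups of $G$; intersecting this single identity with $P_G$ then makes both second components agree as well, so the two preordered-group normal subobjects coincide.

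The only genuine thing to check carefully — and the step I expect to be the mild obstacle — is that the second components are honestly forced to be the intersections $S \cap P_G$, i.e. that one cannot have a normal subobject $(S, P_S)$ with $P_S \subsetneq S \cap P_G$. This is where one must invoke that normal subobjects in the normal category $\mathsf{PreOrdGrp}$ are precisely kernels, combined with the earlier description of kernels: the kernel of $(f,\bar f)$ is $(\mathsf{Ker}(f),\, \mathsf{Ker}(f) \cap P_G)$, so indeed $P_S = S \cap P_G$ for every normal subobject. Once this normalization is in place the rest is the purely formal transport of the group-theoretic modular law through the lattice isomorphism described above, with no further computation required.
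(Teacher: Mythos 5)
Your proposal is correct and follows essentially the same route as the paper: both reduce the modular law to the classical modularity of $\mathsf{Norm(Grp)}$ by observing that every normal subobject of $(G,P_G)$ is of the form $(S, S\cap P_G)$ (being a kernel), that meets are componentwise pullbacks, and that joins are given by $(X\cdot Y, (X\cdot Y)\cap P_G)$ as in the preceding Lemma. Your explicit justification that the positive cone of a normal subobject is forced to be $S\cap P_G$ is a point the paper uses only implicitly, but the argument is otherwise identical.
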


\begin{proof}
We already know that $A \wedge (B \vee C) = (A \wedge B) \vee C$ since the lattice $\mathsf{Norm(Grp)}$ of normal subgroups of $G$ is modular. We therefore compute that
\begin{align*}
(A,P_A) \wedge ((B,P_B) \vee (C,P_C)) & = (A,P_A) \wedge (B \cdot C, (B \cdot C) \cap P_G)\\
& = (A \cap (B \cdot C),P_A \cap (B \cdot C) \cap P_G)\\
& = (A \cap (B \cdot C),A \cap P_G \cap (B \cdot C) \cap P_G)\\
& = (A \cap (B \cdot C),(A \cap (B \cdot C)) \cap P_G)\\
& = (A \wedge (B \vee C),(A \wedge (B \vee C)) \cap P_G)\\
& = ((A \wedge B) \vee C,((A \wedge B) \vee C) \cap P_G)\\
& = (A \cap B) \cdot C,((A \cap B) \cdot C) \cap P_G)\\
& = (A \cap B,P_A \cap P_B) \vee (C,P_C)\\
& = ((A,P_A) \wedge (B,P_B)) \vee (C,P_C),
\end{align*}
where we have used the fact that the infimum of two normal subobjects in $\mathsf{Norm(PreOrdGrp)}$ is given by their pullback.
\end{proof}

\begin{corollary}
The Galois structure \eqref{Galois structure} is admissible.
\end{corollary}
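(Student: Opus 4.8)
The plan is to invoke Proposition \ref{prop JK} directly, applied to $\C = \mathsf{PreOrdGrp}$, $\F = \mathsf{PreOrdAb}$, $F = C$, $U = V$, with $\E = \E_C$ the class of normal epimorphisms in $\mathsf{PreOrdGrp}$. Four hypotheses have to be in place. First, $\mathsf{PreOrdGrp}$ is a normal category, which is the result of \cite{CMFM} recalled above. Second, $\E_C$ is, by definition, the class of normal epimorphisms in $\mathsf{PreOrdGrp}$, so that $\Gamma_C$ is indeed a Galois structure of the required shape (the preceding Corollary). Third, $\mathsf{PreOrdAb}$ is a full reflective subcategory of $\mathsf{PreOrdGrp}$, with reflector $C$, by the Proposition above. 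Fourth, the lattice $\mathsf{Norm(PreOrdGrp)}$ of normal subobjects in $\mathsf{PreOrdGrp}$ is modular, by Proposition \ref{Modularity of PreOrdGrp}.

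The only point that still needs a (short) verification is that the full reflective subcategory $\mathsf{PreOrdAb}$ is closed in $\mathsf{PreOrdGrp}$ under subobjects and under regular quotients. For subobjects: if $(f,\bar{f}) \colon (G,P_G) \rightarrowtail (A,P_A)$ is a monomorphism in $\mathsf{PreOrdGrp}$ with $(A,P_A)$ a preordered abelian group, then $f \colon G \to A$ is an injective group homomorphism, so $G$ embeds into the abelian group $A$ and is therefore abelian; hence $(G,P_G) \in \mathsf{PreOrdAb}$. For regular quotients: if $(q,\bar{q}) \colon (A,P_A) \twoheadrightarrow (Q,P_Q)$ is a normal epimorphism in $\mathsf{PreOrdGrp}$ with $(A,P_A)$ a preordered abelian group, then $q \colon A \to Q$ is a surjective group homomorphism, so $Q$ is a quotient of the abelian group $A$ and is therefore abelian; hence $(Q,P_Q) \in \mathsf{PreOrdAb}$. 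With these closure properties established, all the hypotheses of Proposition \ref{prop JK} are satisfied, and we conclude that the Galois structure $\Gamma_C$ of \eqref{Galois structure} is admissible.

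I expect no real obstacle in this Corollary: the substantive work has already been carried out in establishing the modularity of $\mathsf{Norm(PreOrdGrp)}$ (Proposition \ref{Modularity of PreOrdGrp}) and in the categorical result of Janelidze--Kelly (Proposition \ref{prop JK}), and what remains amounts to the bookkeeping observation that subobjects and regular quotients of a preordered \emph{abelian} group are again preordered abelian, which reduces to the elementary fact that subgroups and quotient groups of an abelian group are abelian. One may also record, although it is not needed for the argument, that the counit of the adjunction $C \dashv V$ is an isomorphism since $V$ is the inclusion of a full reflective subcategory, so that the three equivalent formulations of admissibility in Proposition \ref{BasicAdj} are all available for $\Gamma_C$; this will be convenient in the sequel.
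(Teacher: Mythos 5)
Your proposal is correct and follows exactly the paper's route: the Corollary is stated there as a direct consequence of Propositions \ref{prop JK} and \ref{Modularity of PreOrdGrp}, which is precisely your argument. The extra verification that $\mathsf{PreOrdAb}$ is closed under subobjects and regular quotients is a hypothesis the paper leaves implicit, and your check of it is accurate.
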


\begin{proof}
This is a direct consequence of Propositions \ref{prop JK} and \ref{Modularity of PreOrdGrp}.
\end{proof}

%
%

We will prove in Theorem \ref{GammaC-normal extensions} that the $\Gamma_C$-normal extensions are given by the normal epimorphisms
\begin{tikzcd}
(f,\bar{f}) \colon (G,P_G) \arrow[r,two heads]
& (H,P_H)
\end{tikzcd} 
such that
\begin{itemize}
\item[(i)] 
\begin{tikzcd}
f \colon G \arrow[r,two heads]
& H
\end{tikzcd} 
is an \emph{algebraically central extension}, i.e. $\mathsf{Ker}(f) \subseteq Z(G)$;
\item[(ii)] the following Condition $(\star)$ holds for $(f,\bar{f})$: 
\vspace{3mm}

\centerline{$(\star) \qquad$ for any $(a,b,c) \in Eq(\bar{\eta}_G) \times_{P_{G}} Eq(\bar{f})$, $a - b + c \in P_G$,}
\vspace{3mm}
\noindent where $\bar{\eta}_G \colon P_G \twoheadrightarrow \eta_G(P_G)$ is the restriction of $\eta_G \colon G \twoheadrightarrow G/[G,G]$ to the positive cones.
\end{itemize}

The following two lemmas will be helpful to establish this characterization.

\begin{lemma} \label{stability of Condition star}
Consider the following pullback
\begin{center}
\begin{tikzcd}[row sep=large,column sep=large]
(P,P_P) \arrow[r,two heads,"{(p_2,\bar{p}_2)}"] \arrow[d,two heads,"{(p_1,\bar{p}_1)}"']
& (G,P_G) \arrow[d,two heads,"{(f,\bar{f})}"]\\
(E,P_E) \arrow[r,two heads,"{(p,\bar{p})}"']
& (H,P_H)
\end{tikzcd}
\end{center}
 in $\mathsf{PreOrdGrp}$, where all the arrows are regular epimorphisms.
\begin{enumerate}
\item If Condition $(\star)$ holds for $(f,\bar{f})$, then it holds for $(p_1,\bar{p}_1)$.
\item If $(p,\bar{p}) = (f,\bar{f})$, then Condition $(\star)$ holds for $(p_1,\bar{p}_1)$ if and only if it holds for $(f,\bar{f})$.
\end{enumerate}
\end{lemma}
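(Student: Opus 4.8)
The plan is to work entirely with the explicit componentwise descriptions of pullbacks in $\mathsf{PreOrdGrp}$ recalled in Section \ref{Functor C}, together with the explicit description of $Eq(\bar\eta_G)$ and the fibre product $Eq(\bar\eta_G)\times_{P_G} Eq(\bar f)$. First I would unwind Condition $(\star)$ for the map $(p_1,\bar p_1)$. An element of $Eq(\bar\eta_E)\times_{P_E} Eq(\bar p_1)$ is a triple $(a',b',c')$ of elements of $P_P$ with $\eta_P(a')=\eta_P(b')$ and $p_1(b')=p_1(c')$; I must show $a'-b'+c'\in P_P$. Since $P_P=P_E\times_{P_H}P_G$ sits inside $E\times_H G$, writing $a'=(\alpha_1,\alpha_2)$, etc., and using that $p_1$ is the first projection and $p_2$ the second, the condition $p_1(b')=p_1(c')$ says $\beta_1=\gamma_1$, while $\eta_P(a')=\eta_P(b')$ will, after checking how abelianization interacts with the pullback, give $\eta_E(\alpha_1)=\eta_E(\beta_1)$ and $\eta_G(\alpha_2)=\eta_G(\beta_2)$. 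The goal $a'-b'+c'\in P_P$ reduces componentwise to $\alpha_1-\beta_1+\gamma_1\in P_E$ (trivially true since $\beta_1=\gamma_1$, so this is just $\alpha_1\in P_E$) and $\alpha_2-\beta_2+\gamma_2\in P_G$.

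For item (1), the key point is to extract from the data a triple in $Eq(\bar\eta_G)\times_{P_G}Eq(\bar f)$ so that Condition $(\star)$ for $(f,\bar f)$ applies. We have $\eta_G(\alpha_2)=\eta_G(\beta_2)$, so $(\alpha_2,\beta_2)\in Eq(\bar\eta_G)$; and $f(\beta_2)=p(\beta_1)=p(\gamma_1)=f(\gamma_2)$ using commutativity of the pullback square and $\beta_1=\gamma_1$, so $(\beta_2,\gamma_2)\in Eq(\bar f)$; these share the middle component $\beta_2\in P_G$, hence $(\alpha_2,\beta_2,\gamma_2)\in Eq(\bar\eta_G)\times_{P_G}Eq(\bar f)$. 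Condition $(\star)$ for $(f,\bar f)$ then yields $\alpha_2-\beta_2+\gamma_2\in P_G$, which together with the $P_E$-component gives $a'-b'+c'\in P_P$. For item (2), the direction "$(\star)$ for $(f,\bar f)$ $\Rightarrow$ $(\star)$ for $(p_1,\bar p_1)$" is a special case of (1). For the converse, when $(p,\bar p)=(f,\bar f)$ the diagonal $(G,P_G)\to(P,P_P)$, $x\mapsto(x,x)$, is a section of $(p_1,\bar p_1)$ (indeed a morphism in $\mathsf{PreOrdGrp}$); given a triple $(a,b,c)\in Eq(\bar\eta_G)\times_{P_G}Eq(\bar f)$ I would lift it via the diagonal to the triple $((a,a),(b,b),(c,c))$ — I should verify this lands in $Eq(\bar\eta_P)\times_{P_P}Eq(\bar p_1)$, which amounts to checking $\eta_P(a,a)=\eta_P(b,b)$ (from $\eta_G(a)=\eta_G(b)$ and $f(a)=f(b)$, the latter holding since $(a,b)\in Eq(\bar\eta_G)$... wait, not automatically) — here I need that $(a,b)\in Eq(\bar\eta_G)$ and $(b,c)\in Eq(\bar f)$ suffice, and in fact $p_1(a,a)=a$, $p_1(b,b)=b$; the membership $(b,b)\in P_P$ requires $f(b)=f(b)$, fine, and $(a,a)\in P_P$ requires $f(a)=f(a)$, fine. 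Then applying $(\star)$ for $(p_1,\bar p_1)$ to this lifted triple gives $(a,a)-(b,b)+(c,c)=(a-b+c,a-b+c)\in P_P$, and projecting along $p_2$ gives $a-b+c\in P_G$.

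The main obstacle I anticipate is the compatibility of the abelianization functor $C$ with the pullback, i.e. correctly identifying $\eta_P\colon P\to ab(P)$ and the condition $\eta_P(a')=\eta_P(b')$ in componentwise terms. Since $C$ is only a left adjoint it need not preserve the pullback on the nose, so I cannot simply assert $ab(P)=ab(E)\times_{ab(H)}ab(G)$; instead I would argue directly that for $x,y\in P\subseteq E\times_H G$ one has $\eta_P(x)=\eta_P(y)$ $\iff$ $x-y\in[P,P]$, and use $[P,P]\subseteq[E,E]\times[G,G]$ (projections are group homomorphisms) to get the implication $\eta_P(x)=\eta_P(y)\Rightarrow \eta_E(x_1)=\eta_E(y_1)$ and $\eta_G(x_2)=\eta_G(y_2)$, which is the only direction needed for (1). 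For (2) one only needs the diagonal to be compatible with $\eta$, which is automatic since $\eta$ is natural. Everything else is bookkeeping with the componentwise formulas for pullbacks, positive cones, and equalizers already recorded in the excerpt.
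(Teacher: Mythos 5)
Your part (1) is correct and is essentially the paper's own argument: project the triple to the $G$-component, use $[P,P]\subseteq [E,E]\times[G,G]$ to deduce $(\alpha_2,\beta_2)\in Eq(\bar{\eta}_G)$, use $f(\beta_2)=p(\beta_1)=p(\gamma_1)=f(\gamma_2)$ to deduce $(\beta_2,\gamma_2)\in Eq(\bar{f})$, apply Condition $(\star)$ for $(f,\bar{f})$, and reassemble (the compatibility $p(\alpha_1)=f(\alpha_2-\beta_2+\gamma_2)$ is automatic since $a'-b'+c'$ is computed inside the subgroup $P\le E\times G$). The forward direction of (2) as a special case of (1) is also fine.

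The genuine gap is in the converse direction of (2). The lifted triple $((a,a),(b,b),(c,c))$ does not in general belong to $Eq(\bar{\eta}_{Eq(f)})\times_{P_P}Eq(\bar{p}_1)$: the fibre-product condition requires $\bar{p}_1(b,b)=\bar{p}_1(c,c)$, i.e. $b=c$, which fails for a general triple $(a,b,c)$, and this is precisely the condition you never check (you only verify the memberships $(a,a),(b,b)\in P_P$). So Condition $(\star)$ for $(p_1,\bar{p}_1)$ cannot be applied to that triple. The paper's choice is to take the third element to be $(b,c)$ instead of $(c,c)$: it lies in $P_P=Eq(\bar{f})$ because $b,c\in P_G$ and $f(b)=f(c)$, and $\bar{p}_1(b,b)=b=\bar{p}_1(b,c)$, so $((a,a),(b,b),(b,c))\in Eq(\bar{\eta}_{Eq(f)})\times_{P_P}Eq(\bar{p}_1)$; Condition $(\star)$ for $(p_1,\bar{p}_1)$ then gives $(a,a)-(b,b)+(b,c)=(a,\,a-b+c)\in Eq(\bar{f})$, hence $a-b+c\in P_G$. (Your unresolved worry about $\eta_{Eq(f)}(a,a)=\eta_{Eq(f)}(b,b)$ is not a real problem: it follows either by naturality of $\eta$ applied to the diagonal morphism $(G,P_G)\to(Eq(f),Eq(\bar{f}))$, as you suggest at the end, or, as in the paper, by writing $a-b$ as a sum of commutators in $G$ and noting that the corresponding diagonal elements give commutators in $[Eq(f),Eq(f)]$.) With the third element corrected, the rest of your argument goes through.
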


\begin{proof}
\begin{enumerate}
\item Let $((e_1,x_1),(e_2,x_2),(e_3,x_3)) \in Eq(\bar{\eta}_P) \times_{P_P} Eq(\bar{p}_1)$. Then, this means that
\begin{itemize}
\item $e_i \in P_E$ and $x_i \in P_G$ for any $i = 1, 2, 3$;
\item $p(e_i) = f(x_i)$ for any $i = 1, 2, 3$;
\item $\eta_P(e_1,x_1) = \eta_P(e_2,x_2)$, which implies that
\[(e_1 - e_2,x_1 - x_2) \in \mathsf{Ker}(\eta_P) = [P,P],\]
hence, in particular, $x_1 - x_2 \in [G,G]$;
\item $p_1(e_2,x_2) = p_1(e_3,x_3)$, i.e. $e_2 = e_3$.
\end{itemize}
As a consequence, 
\begin{itemize}
\item $\eta_G(x_1) = \eta_G(x_2)$, i.e. $(x_1,x_2) \in Eq(\bar{\eta}_G)$;
\item $f(x_2) = p(e_2) = p(e_3) = f(x_3)$, i.e. $(x_2,x_3) \in Eq(\bar{f})$.
\end{itemize}
In other words, $(x_1,x_2,x_3) \in Eq(\bar{\eta}_G) \times_{P_G} Eq(\bar{f})$. By assumption the element $x_1 - x_2 + x_3$ then belongs to $P_G$, and one has that
\[(e_1,x_1) - (e_2,x_2) + (e_3,x_3) = (e_1 - e_2 + e_3,x_1 - x_2 + x_3) = (e_1, x_1 - x_2 + x_3) \in P_P,\]
as desired. 
\item Thanks to $(1)$, it suffices to check that, if Condition $(\star)$ holds for $(p_1,\bar{p}_1)$ (with $(p_1,\bar{p}_1)$ the first projection of the kernel pair of $(f,\bar{f})$), then it also holds for $(f,\bar{f})$. \\ Let $(a,b,c) \in Eq(\bar{\eta}_G) \times_{P_G} Eq(\bar{f})$. In particular, this means that $a, b, c \in P_G$,  $a-b \in \mathsf{Ker} (\eta_G)  = [G,G]$, and $f(b) = f(c)$. The fact that $a-b \in [G,G]$ implies that 
$$(a,a) - (b,b) = (a-b,a-b) = ([x_1, x_2] + [x_3,x_4] + \cdots + [x_{n-1}, x_n], [x_1, x_2] + [x_3,x_4] + \cdots + [x_{n-1}, x_n]),$$
for some suitable elements $x_i \in G$ (with $i \in \{1, \cdots , n\}$).
It follows that
\[(a,a) - (b,b) = (x_1,x_1) + (x_2,x_2) - (x_1,x_1) - (x_2,x_2)  + \cdots +  (x_{n-1},x_{n-1}) + (x_n,x_n) - (x_{n-1},x_{n-1}) - (x_n,x_n) 
,\]
so that $(a,a) - (b,b) \in [Eq(f),Eq(f)] = \mathsf{Ker}(\eta_{Eq(f)})$.
This means that $((a,a),(b,b)) \in Eq(\bar{\eta}_{Eq(f)})$. 
On the other hand $p_1(b,b) = p_1(b,c)$, and this implies that $((b,b),(b,c)) \in Eq(\bar{p}_1)$, and then 
$$((a,a),(b,b),(b,c)) \in Eq(\bar{\eta}_{Eq(f)}) \times_{Eq(\bar{f})} Eq(\bar{p}_1).$$ So, by assumption,
\[(a,a-b+c) = (a,a) - (b,b) + (b,c) \in Eq(\bar{f}),\]
hence, in particular, $a-b+c \in P_G$. 
\qedhere
\end{enumerate}
\end{proof}

\begin{lemma} \label{Condition star in PreOrdAb}
Condition $(\star)$ holds for any morphism $(f,\bar{f}) \colon (G,P_G) \rightarrow (H,P_H)$ in $\mathsf{PreOrdGrp}$ where $(G,P_G) \in \mathsf{PreOrdAb}$.
\end{lemma}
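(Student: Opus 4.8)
The plan is to exploit the fact that abelianness of $G$ trivialises the first of the two equivalence relations appearing in Condition $(\star)$. Concretely, if $(G,P_G)\in\mathsf{PreOrdAb}$, then $G$ is abelian, so every commutator $[x,y]=x+y-x-y$ vanishes and hence the derived subgroup $[G,G]$ is trivial. Consequently the quotient $\eta_G\colon G\twoheadrightarrow G/[G,G]$ is an isomorphism, and therefore its restriction $\bar{\eta}_G\colon P_G\twoheadrightarrow \eta_G(P_G)$ to the positive cones is an isomorphism of monoids as well.

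The first step is then to read off what this means for $Eq(\bar{\eta}_G)$: since $\bar{\eta}_G$ is (mono and hence) injective, one has $\eta_G(a)=\eta_G(b)$ if and only if $a=b$, so $Eq(\bar{\eta}_G)$ is just the discrete equivalence relation (the diagonal) on $P_G$. The second step is to unwind the definition of the pullback $Eq(\bar{\eta}_G)\times_{P_G}Eq(\bar{f})$ for an arbitrary element $(a,b,c)$ of it: by definition $a,b,c\in P_G$, the pair $(a,b)$ lies in $Eq(\bar{\eta}_G)$, and the pair $(b,c)$ lies in $Eq(\bar{f})$. From the first step, $(a,b)\in Eq(\bar{\eta}_G)$ forces $a=b$. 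Hence, computing in $G$ (additively), $a-b+c = a-a+c = c$, which belongs to $P_G$ since $c\in P_G$. This is exactly the assertion that $a-b+c\in P_G$, so Condition $(\star)$ holds for $(f,\bar{f})$.

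I do not expect any genuine obstacle here: the statement is really an observation that, over an abelian base, the ``relative commutator'' condition collapses, because the only part of Condition $(\star)$ that could fail involves the equivalence relation $Eq(\bar{\eta}_G)$, and this relation is trivial precisely when $[G,G]=0$. The only points requiring a (one-line) justification are that $[G,G]$ is trivial for abelian $G$, that this makes $\bar{\eta}_G$ an isomorphism, and that therefore $Eq(\bar{\eta}_G)$ is the diagonal; after that the verification $a-b+c=c\in P_G$ is immediate.
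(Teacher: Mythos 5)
Your proof is correct and follows essentially the same route as the paper: since $G$ is abelian, $[G,G]$ is trivial, so $\eta_G$ (and hence $\bar{\eta}_G$) is injective, forcing $a=b$ and therefore $a-b+c=c\in P_G$. The paper states this more tersely (writing $\eta_G=1_G$ and concluding $a=\eta_G(a)=\eta_G(b)=b$), but the argument is the same.
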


\begin{proof}
Let $(a,b,c) \in Eq(\bar{\eta}_G) \times_{P_G} Eq(\bar{f})$. Since $G \in \mathsf{Ab}$, we have that $\eta_G = 1_G$, and then that $a = \eta_G(a) = \eta_G(b) = b$. It follows that $a - b + c = c \in P_G$.
\end{proof}

\begin{theorem} \label{GammaC-normal extensions}
Let 
\begin{tikzcd}
(f,\bar{f}) \colon (G,P_G) \arrow[r,two heads]
& (H,P_H)
\end{tikzcd}
be a regular epimorphism in $\mathsf{PreOrdGrp}$. The following conditions are equivalent:
\begin{enumerate}
\item
\begin{itemize} 
\item[(i)] $\mathsf{Ker}(f) \subseteq Z(G)$; 
\item[(ii)] for any $(a,b,c) \in Eq(\bar{\eta}_G) \times_{P_{G}} Eq(\bar{f})$, we have $a - b + c \in P_G$.
\end{itemize}
\item $(f,\bar{f})$ is a $(\Gamma_C\text{-})$normal extension.
\end{enumerate}
\end{theorem}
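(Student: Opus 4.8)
The plan is to prove the two implications $(2) \Rightarrow (1)$ and $(1) \Rightarrow (2)$ separately, using the kernel pair $(p_1, \bar p_1), (p_2, \bar p_2) \colon Eq(f,\bar f) \rightrightarrows (G, P_G)$ of the given regular epimorphism as the monadic extension witnessing normality. Since $(f, \bar f)$ is itself a regular epimorphism in $\mathsf{PreOrdGrp}$, it is an effective descent morphism (as recalled in Section \ref{Functor C}), hence monadic; so $(f,\bar f)$ is $\Gamma_C$-normal precisely when $(f,\bar f)^*(f,\bar f) = (p_1, \bar p_1)$ is a $\Gamma_C$-trivial extension, i.e.\ the naturality square of the unit $\eta$ at $(p_1, \bar p_1)$ with respect to $C \dashv V$ is a pullback.

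\textbf{From triviality of $(p_1,\bar p_1)$ to conditions (i)--(ii).} First I would unwind what $\Gamma_C$-triviality of $(p_1, \bar p_1)$ means concretely. Applying $C$ componentwise (recall $C$ acts on the group component by abelianization and on the positive cone by direct image), the square
\begin{center}
\begin{tikzcd}[column sep=large,row sep=large]
(Eq(f), P_{Eq(f)}) \arrow[r,"{(\eta_{Eq(f)}, \bar\eta_{Eq(f)})}"] \arrow[d,"{(p_1,\bar p_1)}"']
& (ab(Eq(f)), \eta_{Eq(f)}(P_{Eq(f)})) \arrow[d,"{(ab(p_1), \overline{ab(p_1)})}"]\\
(G, P_G) \arrow[r,"{(\eta_G, \bar\eta_G)}"']
& (ab(G), \eta_G(P_G))
\end{tikzcd}
\end{center}
must be a pullback in $\mathsf{PreOrdGrp}$, which (limits being computed componentwise) happens iff the group-level square is a pullback in $\mathsf{Grp}$ \emph{and} the positive-cone square is a pullback in $\mathsf{Mon}$. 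The group-level square being a pullback is, by the classical Janelidze--Kelly characterization, equivalent to $f$ being an algebraically central extension, giving (i); I would cite this or reprove it quickly via $[Eq(f), Eq(f)] \cap Eq(f)_1$-type arguments where $Eq(f)_1 = \mathsf{Ker}(p_1)$. The positive-cone square being a pullback says exactly that whenever $u \in P_{Eq(f)}$ and $v \in \eta_G(P_G)$ satisfy $\bar\eta_G(p_1(u))$-compatibility and the lift $\bar p_1(\dots)$ conditions, the canonical element lies in $P_{Eq(f)}$; translating an element of $Eq(\bar\eta_G) \times_{P_G} Eq(\bar f)$, i.e.\ a triple $(a,b,c)$ with $a,b,c \in P_G$, $\eta_G(a) = \eta_G(b)$, $f(b) = f(c)$, into the pair $((a,a),(b,c)) \in P_{Eq(f)}$ together with $\eta_G(a) \in \eta_G(P_G)$, the failure of $a - b + c \in P_G$ is precisely the failure of this pair to come from an element of $P_{Eq(f)}$ whose $\bar\eta_{Eq(f)}$-image and $\bar p_1$-image match. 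This yields (ii).

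\textbf{From (i)--(ii) to $\Gamma_C$-normality.} Conversely, assuming (i) and (ii), I would show the displayed componentwise square above is a pullback. On the group level this is immediate from (i) and the classical theory. On the positive-cone level, I would take a compatible pair in the pullback $\eta_G(P_G) \times_{\overline{ab(G)}} \overline{ab(Eq(f))}$ — concretely an element $p \in P_G$ with $\eta_G(p)$ and an element $\omega \in \eta_{Eq(f)}(P_{Eq(f)})$ with $\overline{ab(p_1)}(\omega) = \bar\eta_G(p)$ — lift $\omega$ to some $(e_1, e_2) \in P_{Eq(f)}$ (so $f(e_1) = f(e_2)$, both $e_i \in P_G$) and some $(b,b) \in P_{Eq(f)}$ with $p_1(b,b) = p$, i.e.\ $b = p$, and $\eta_{Eq(f)}(e_1,e_2) \cdot \eta_{Eq(f)}(\text{diag lift})^{-1}$-bookkeeping reduces the matching of $\bar\eta$-classes to an element of $[Eq(f),Eq(f)]$, exactly as in the second part of Lemma \ref{stability of Condition star}; then Condition $(\star)$ for $(f,\bar f)$ — which is (ii) — produces the required element of $P_{Eq(f)}$ mapping correctly. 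Here Lemma \ref{stability of Condition star}(2) is the crucial input: it lets me pass Condition $(\star)$ back and forth between $(f,\bar f)$ and its pullback-projection $(p_1, \bar p_1)$, so I never have to verify triviality of $(p_1,\bar p_1)$ from scratch.

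\textbf{Main obstacle.} The delicate point is the positive-cone bookkeeping: carefully matching which element of $Eq(f)$ (as a subgroup of $G \times G$) corresponds to which triple $(a,b,c)$, and checking that the commutator element appearing in the diagonal computation of Lemma \ref{stability of Condition star}(2) genuinely lands in $[Eq(f), Eq(f)] = \mathsf{Ker}(\eta_{Eq(f)})$ rather than merely in $[G,G] \times [G,G]$. The identity $(x_i, x_i) + (x_j, x_j) - (x_i,x_i) - (x_j,x_j) \in [Eq(f), Eq(f)]$ used there relies on the diagonal $G \to Eq(f)$ being a group homomorphism, which is fine since $f$ is a homomorphism; I would make sure this is spelled out. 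Modularity of $\mathsf{Norm(PreOrdGrp)}$ (Proposition \ref{Modularity of PreOrdGrp}) does not enter directly here — it was already used to get admissibility of $\Gamma_C$ — but admissibility is what legitimizes using the unit-square-is-pullback formulation of triviality in the first place, so I would invoke it at the outset.
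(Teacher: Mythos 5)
Your overall architecture coincides with the paper's: both reduce $\Gamma_C$-normality to the statement that the unit naturality square at the kernel-pair projection $(\pi_1,\bar{\pi}_1)$ is a pullback, handle the group component by the classical Janelidze--Kelly result for the abelianization adjunction, and handle the positive-cone component by element-chasing with the identity $(a-b,a-b)\in[Eq(f),Eq(f)]$. Your $(1)\Rightarrow(2)$ is essentially the paper's argument (injectivity of the comparison map $\bar{\phi}$ from injectivity of $\phi$, surjectivity from condition (ii) plus the diagonal commutator computation).

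Where you diverge is in $(2)\Rightarrow(1)$: you propose to read condition (ii) \emph{directly} off the fact that the positive-cone square is a pullback, via the correspondence $(a,b,c)\mapsto(a,\eta_{Eq(f)}(b,c))$. This can be made to work, but as stated there is a gap. Surjectivity of $\bar{\phi}$ only hands you \emph{some} $(a,y)\in Eq(\bar{f})$ with $\eta_{Eq(f)}(a,y)=\eta_{Eq(f)}(b,c)$, i.e.\ $(a-b,\,y-c)\in[Eq(f),Eq(f)]$; your phrase ``is precisely the failure'' presumes $y=a-b+c$, which does not follow from $(a-b,y-c)\in[G,G]\times[G,G]$. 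To close this you must first establish (i) from the group-level square and then prove the \emph{reverse} containment $[Eq(f),Eq(f)]\subseteq\Delta([G,G])$ (every commutator $[(g,g+k),(h,h+l)]$ equals $([g,h],[g,h])$ when $k,l\in Z(G)$), which forces $y-c=a-b$. Note that in your ``main obstacle'' paragraph you only flag the easy containment $\Delta([G,G])\subseteq[Eq(f),Eq(f)]$, which is what $(1)\Rightarrow(2)$ needs; it is the converse inclusion that your $(2)\Rightarrow(1)$ silently uses. The paper avoids this entirely by a different route: Condition $(\star)$ holds automatically for $C(\pi_1,\bar{\pi}_1)$ since its domain is a preordered abelian group (Lemma \ref{Condition star in PreOrdAb}), it descends along the unit pullback square by Lemma \ref{stability of Condition star}(1), and then transfers from $(\pi_1,\bar{\pi}_1)$ to $(f,\bar{f})$ by Lemma \ref{stability of Condition star}(2) --- applying $(\star)$ to the chosen triple $((a,a),(b,b),(b,c))$ rather than decoding an arbitrary element of the pullback. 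Your route is more direct but needs the extra structural fact about $[Eq(f),Eq(f)]$; the paper's lemma chain buys independence from that fact at the cost of the two auxiliary lemmas.
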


\begin{proof}
\begin{itemize}
\item $(1) \Rightarrow (2)$: We need to prove that the first projection $(\pi_1,\bar{\pi}_1)$ of the kernel pair of $(f,\bar{f})$ is a ($\Gamma_C$-)trivial extension, i.e. that the square below
\begin{equation}\label{naturality square for C}
\begin{tikzcd}[row sep=large,column sep=large]
{Eq(f,\bar{f})} \arrow[rr,two heads,"{(\eta_{Eq(f)},\bar{\eta}_{Eq(f)})}"] \arrow[dd,two heads,"{(\pi_1,\bar{\pi}_1)}"']
&
& {C(Eq(f,\bar{f}))} \arrow[dd,two heads,"{C(\pi_1,\bar{\pi}_1) = (C(\pi_1),C(\bar{\pi}_1))}"]\\
 & & \\
{(G,P_G)} \arrow[rr,two heads,"{(\eta_G,\bar{\eta}_G)}"']
&
& {C(G,P_G)}
\end{tikzcd}
\end{equation}
is a pullback in the category $\mathsf{PreOrdGrp}$ of preordered groups. First of all it is well-known that (i) implies that its restriction to the category $\mathsf{Grp}$ of groups is a pullback (in $\mathsf{Grp}$) \cite{JG90}. It remains to show that the external square in the diagram
\begin{center}
\begin{tikzcd}[row sep=large,column sep=large]
Eq(\bar{f}) \arrow[rr,two heads,"{\bar{\eta}_{Eq(f)}}"] \arrow[dr,tail,dotted,"\bar{\phi}"'] \arrow[dd,two heads,"{\bar{\pi}_1}"']
& & \eta_{Eq(f)}(Eq(\bar{f})) \arrow[dd,two heads,"{C(\bar{\pi}_1)}"]\\
 & P_P \arrow[ur,two heads,"{\bar{q}_2}"'] \arrow[dl,two heads,"{\bar{q}_1}"]
& \\
P_G \arrow[rr,two heads,"{\bar{\eta}_G}"']
& & \eta_G(P_G)
\end{tikzcd}
\end{center}
is a pullback in $\mathsf{Mon}$. Consider then the pullback $(P,P_P)$ of $(\eta_G,\bar{\eta}_G)$ and $C(\pi_1,\bar{\pi}_1)$ (denoted with a slight abuse of notation by $(C(\pi_1),C(\bar{\pi}_1)$)) in $\mathsf{PreOrdGrp}$ (with the two projections $(q_1,\bar{q}_1)$ and $(q_2,\bar{q}_2)$), as well as the induced morphism $(\phi,\bar{\phi}) \colon (Eq(f), Eq(\bar{f})) \rightarrow (P,P_P)$ to the pullback (with $\phi$ an isomorphism, as already observed). Since $\phi$ is an isomorphism, obviously $\bar{\phi}$ is a monomorphism by commutativity of the following square:
\begin{center}
\begin{tikzcd}
Eq(\bar{f}) \arrow[r,tail,"{\bar{\phi}}"] \arrow[d,tail]
& P_P \arrow[d,tail]\\
Eq(f) \arrow[r,"\phi"',"{\cong}"]
& P.
\end{tikzcd}
\end{center}
So it suffices to show that $\bar{\phi}$ is surjective. Let $(a,\eta_{Eq(f)}(b,c)) \in P_P$. In other words, $a,b,c \in P_G$ are such that $f(b) = f(c)$ and $\eta_G(a) = C(\bar{\pi}_1)(\eta_{Eq(f)}(b,c)) = (\eta_G \cdot \pi_1)(b,c) = \eta_G(b)$, that is, $(a,b,c) \in Eq(\bar{\eta}_G) \times_{P_G} Eq(\bar{f})$. By (ii), we then have that $(a,a-b+c) \in Eq(\bar{f})$. We are going to show that this element is sent by $\phi$ to $(a,\eta_{Eq(f)}(b,c))$. We first observe that, since $(a,b) \in Eq(\bar{\eta}_G)$, there exist $x_i \in G$ (for $i \in \{1,\cdots,n\}$) such that $$a - b = x_1 + x_2 - x_1 - x_2 + x_3 + x_4 - x_3 - x_4 + \cdots + x_{n-1} + x_n - x_{n-1} - x_n.$$
This implies that
\begin{align*}
(a-b,a-b) = & (x_1,x_1) + (x_2,x_2) - (x_1,x_1) - (x_2,x_2) \\
&  + \cdots + (x_{n-1},x_{n-1}) + (x_n,x_n) - (x_{n-1},x_{n-1}) - (x_n,x_n),
\end{align*}
i.e. $(a-b,a-b) \in [Eq(f),Eq(f)] = \mathsf{Ker}(\eta_{Eq(f)})$. As a consequence we can compute
\begin{align*}
\phi(a,a-b+c) & = (a,\eta_{Eq(f)}(a,a-b+c))\\
& = (a,\eta_{Eq(f)}(a-b,a-b) + \eta_{Eq(f)}(b,c))\\
& = (a,\eta_{Eq(f)}(b,c)),
\end{align*}
and deduce that the homomorphism $\bar{\phi}$ is then surjective, hence an isomorphism. This proves that the square \eqref{naturality square for C} is a pullback in $\mathsf{PreOrdGrp}$, i.e. that $(f,\bar{f})$ is a ($\Gamma_C$-)normal extension.
\item $(2) \Rightarrow (1)$: Since $(f,\bar{f})$ is a $(\Gamma_C\text{-})$normal extension, the two squares below are then pullbacks in $\mathsf{PreOrdGrp}$:
\begin{center}
\begin{tikzcd}[row sep=large,column sep=large]
{(Eq(f),Eq(\bar{f}))} \arrow[r,two heads,"{(\pi_2,\bar{\pi}_2)}"] \arrow[d,two heads,"{(\pi_1,\bar{\pi}_1)}"']
& (G,P_G) \arrow[d,two heads,"{(f,\bar{f})}"]\\
(G,P_G) \arrow[r,two heads,"{(f,\bar{f})}"']
& (H,P_H)
\end{tikzcd}
\end{center}
\begin{center}
\begin{tikzcd}[row sep=large,column sep=large]
{(Eq(f),Eq(\bar{f}))} \arrow[rr,two heads,"{(\eta_{Eq(f)},\bar{\eta}_{Eq(f)})}"] \arrow[dd,two heads,"{(\pi_1,\bar{\pi}_1)}"']
&
& {(ab(Eq(f)),\eta_{Eq(f)}(Eq(\bar{f})))} \arrow[dd,two heads,"{C(\pi_1,\bar{\pi}_1)}"]\\
 & & \\
(G,P_G) \arrow[rr,two heads,"{(\eta_G,\bar{\eta}_G)}"']
&
& {(ab(G),\eta_G(P_G)).}
\end{tikzcd}
\end{center}
In this second diagram, $C(\pi_1,\bar{\pi}_1) \in \mathsf{PreOrdAb}$. So, thanks to Lemma \ref{Condition star in PreOrdAb}, we know that $C(\pi_1,\bar{\pi}_1)$ satisfies Condition $(\star)$. Now, using Lemma \ref{stability of Condition star}(1) with the second pullback, we get that Condition $(\star)$ holds for $(\pi_1,\bar{\pi}_1)$. The application of Lemma \ref{stability of Condition star}(2) to the first pullback next gives the validity of Condition $(\star)$ for $(f,\bar{f})$, which corresponds to (ii). Condition (i) follows directly from the fact that 
\begin{tikzcd}
f \colon G \arrow[r,two heads]
& H
\end{tikzcd}
is a normal extension with respect to the admissible Galois structure induced by the abelianization functor $\mathsf{ab} \colon \mathsf{Grp} \rightarrow \mathsf{Ab}$ (see \cite{JG90, JK}). \qedhere
\end{itemize}
\end{proof}

\section{Commutative and abelian objects in the category of preordered groups} \label{Commutative and abelian objects}


This section is devoted to the characterization of the commutative and the abelian objects in the category of preordered groups. For the reader's convenience, we first recall some definitions and results (we refer to \cite{BB} for more details).

A pointed category $\C$ with finite limits is said to be \emph{unital} when, for any objects $X, Y \in \C$, the pair $(l_X, r_Y)$ in the diagram
\begin{center}
\begin{tikzcd}
X \arrow[r,"l_X"]
& X \times Y 
& Y \arrow[l,"r_Y"']
\end{tikzcd}
\end{center}
is strongly epimorphic, where $l_X = \langle 1_X,0 \rangle$ and $r_Y = \langle 0, 1_Y \rangle$. 

\begin{definition} 
Let $\C$ be a unital category. An object $X \in \C$ is said to be \emph{commutative} when there exists a morphism $\phi \colon X \times X \rightarrow X$ making the following diagram commute:
\begin{center}
\begin{tikzcd}
X \arrow[r,"l_X"] \arrow[dr,equal]
& X \times X \arrow[d,dotted,"\phi"]
& X \arrow[l,"r_X"'] \arrow[dl,equal]\\
 & X. &
\end{tikzcd}
\end{center}
\end{definition}

Note that the morphism $\phi$ above is necessarily unique by unitality of the category $\C$. It turns out that any commutative object in a unital category is an (internal) commutative monoid. 

\begin{proposition} \label{ComM(C)} \cite{BB}
Let $\C$ be a unital category. The category $\mathsf{ComMon}(\C)$ of \emph{internal commutative monoids} in $\C$ is the full subcategory of commutative objects in $\C$.
\end{proposition}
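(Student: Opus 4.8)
The statement to prove is Proposition~\ref{ComM(C)}: in a unital category $\C$, the category $\mathsf{ComMon}(\C)$ of internal commutative monoids coincides with the full subcategory of commutative objects. The plan is to establish a bijection, natural in $X$, between commutative monoid structures on an object $X$ and morphisms $\phi \colon X \times X \to X$ satisfying $\phi \cdot l_X = 1_X = \phi \cdot r_X$, and then to check that this bijection is compatible with morphisms, so that the two full subcategories of $\C$ literally have the same objects.

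First I would recall that unitality gives a strong \emph{uniqueness} principle: since $(l_X, r_Y)$ is strongly epimorphic, any two morphisms out of $X \times Y$ that agree after precomposition with $l_X$ and with $r_Y$ are equal. This is the workhorse. Given a commutative object $X$ with its (unique) morphism $\phi \colon X \times X \to X$, I would show $\phi$ is automatically associative, unital with unit $0 \colon 1 \to X$, and commutative, purely by the cancellation principle. For unitality: $\phi \cdot \langle 1_X, 0 \rangle = \phi \cdot l_X = 1_X$ is exactly the defining equation on one side, and $\phi \cdot \langle 0, 1_X \rangle = \phi \cdot r_X = 1_X$ the other; commutativity $\phi \cdot \tau = \phi$ (where $\tau$ is the twist on $X \times X$) follows because both sides agree after precomposing with $l_X$ and $r_X$, using that $\tau \cdot l_X = r_X$ and $\tau \cdot r_X = l_X$. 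Associativity $\phi \cdot (\phi \times 1_X) = \phi \cdot (1_X \times \phi)$ on $X \times X \times X$ is the one calculation that needs a little care: one checks the two composites agree after precomposition with the three canonical injections $X \to X \times X \times X$ (which jointly form a strong epimorphism, by an iterated use of unitality), each time reducing to the unit law for $\phi$. So a commutative object carries a canonical internal commutative monoid structure.

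Conversely, given an internal commutative monoid $(X, m, e)$, the multiplication $m \colon X \times X \to X$ satisfies $m \cdot l_X = 1_X$ and $m \cdot r_X = 1_X$ by the left and right unit laws (using $e = 0$, which holds since $\C$ is pointed and the unit of an internal monoid in a pointed category is forced to be the zero morphism), so $X$ is a commutative object witnessed by $\phi = m$. These two assignments are mutually inverse: starting from $(X,m,e)$ we get $\phi = m$ and then the canonical monoid structure recovered from $\phi$ is $(X,m,e)$ again, by uniqueness of $\phi$ and by the fact that the unit is forced. Finally, a morphism $f \colon X \to Y$ in $\C$ between commutative objects automatically commutes with the structure morphisms, i.e. $f \cdot \phi_X = \phi_Y \cdot (f \times f)$: both sides agree after precomposition with $l_X$ and $r_X$ (reducing to $f = f$ via the defining equations of $\phi_X$ and $\phi_Y$), hence are equal by unitality. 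Thus every arrow of $\C$ between commutative objects is a morphism of internal commutative monoids, and conversely; so the two full subcategories coincide on the nose.

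The main obstacle I expect is the verification of associativity via the joint strong epimorphicity of the three injections $X \to X^3$: one must be slightly careful about how the iterated unitality argument is phrased (for instance, first using that $(l_{X}, r_{X\times X})$ is strongly epi on $X \times (X \times X)$, then applying unitality again inside the second factor), and about bookkeeping the identities $\tau \cdot l_X = r_X$ and the analogous compatibilities among the injections and projections of the triple product. Everything else is a direct diagram chase powered by the cancellation principle, and the naturality/morphism compatibility is immediate once that principle is in hand.
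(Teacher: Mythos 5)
Your argument is correct, and it is essentially the standard one: the paper itself gives no proof of this proposition but simply cites \cite{BB}, where the result is established exactly along the lines you describe (uniqueness and commutativity of $\phi$ from the joint strong epimorphicity of $(l_X,r_X)$, the unit forced to be $0$ by pointedness, associativity checked against the three jointly strongly epimorphic injections into $X\times X\times X$, and fullness from the same cancellation principle). The one step you flag as delicate --- that the three injections into the triple product are jointly strongly epimorphic --- does go through by the iterated unitality argument you sketch, so there is no gap.
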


\begin{definition} \label{definition abelian object} \cite{BB}
Let $\C$ be a unital category. An object $X \in \C$ is said to be \emph{abelian} when it is commutative and the corresponding internal commutative monoid is an internal abelian group.
\end{definition}

The following Proposition gives a useful characterization of abelian objects in a unital category:

\begin{proposition} \label{characterization of abelian objects} \cite{BB}
Let $\C$ be a unital category. Then an object $X$ of $\C$ is abelian if and only if there exists a morphism $\phi \colon X \times X \rightarrow X$ making the diagram 
\begin{center}
\begin{tikzcd}
X \arrow[r,"r_X"] \arrow[dr,equal]
& X \times X \arrow[d,dotted,"\phi"]
& X \arrow[l,"{\Delta_X}"'] \arrow[dl,"0"]\\
 & X &
\end{tikzcd}
\end{center}
commute, where $\Delta_X = \langle 1_X,1_X \rangle$ is the diagonal of $X$.
\end{proposition}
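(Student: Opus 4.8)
The plan is to prove the two implications separately, using Proposition~\ref{ComM(C)} and Definition~\ref{definition abelian object} to rephrase ``$X$ is abelian'' as ``$X$ carries the structure of an internal abelian group''. Throughout, $l_X=\langle 1_X,0\rangle$, $r_X=\langle 0,1_X\rangle$, $\Delta_X=\langle 1_X,1_X\rangle$, and $p_1,p_2\colon X\times X\rightarrow X$ denote the product projections.

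Assume first that $X$ is abelian, so that it carries an internal abelian group structure given by an addition $\alpha\colon X\times X\rightarrow X$ (satisfying the unit laws $\alpha\cdot l_X=\alpha\cdot r_X=1_X$, together with associativity and commutativity) and an inversion $\sigma\colon X\rightarrow X$ (satisfying $\alpha\cdot\langle 1_X,\sigma\rangle=\alpha\cdot\langle\sigma,1_X\rangle=0$). I would then set $\phi:=\alpha\cdot(\sigma\times 1_X)\colon X\times X\rightarrow X$ and check directly, from the definitions of $l_X$, $r_X$ and $\Delta_X$, that $\phi\cdot r_X=\alpha\cdot r_X=1_X$ (using $p_1\cdot r_X=0$, whence $\sigma\cdot(p_1\cdot r_X)=0$) and that $\phi\cdot\Delta_X=\alpha\cdot\langle\sigma,1_X\rangle=0$. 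This is a short computation and settles the ``only if'' direction.

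Conversely, assume a morphism $\phi\colon X\times X\rightarrow X$ with $\phi\cdot r_X=1_X$ and $\phi\cdot\Delta_X=0$ is given. I would introduce $\sigma:=\phi\cdot l_X\colon X\rightarrow X$ as candidate inversion and $m:=\phi\cdot(\sigma\times 1_X)\colon X\times X\rightarrow X$ as candidate addition, and then establish, in this order: (a) $\sigma\cdot\sigma=1_X$; (b) $m$ endows $X$ with the structure of a commutative monoid with neutral element $0$, i.e. $m\cdot l_X=m\cdot r_X=1_X$ together with associativity and commutativity; (c) $\sigma$ is an inversion for $m$. Step (c) is immediate from the defining equations of $\phi$ and (a): indeed $m\cdot\langle 1_X,\sigma\rangle=\phi\cdot\langle\sigma,\sigma\rangle=\phi\cdot\Delta_X\cdot\sigma=0$, and $m\cdot\langle\sigma,1_X\rangle=\phi\cdot\langle\sigma\cdot\sigma,1_X\rangle=\phi\cdot\Delta_X=0$. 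By Proposition~\ref{ComM(C)} and Definition~\ref{definition abelian object}, (b) and (c) together say precisely that $X$ is abelian.

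The essential tool for (a) and (b) is the unitality of $\C$: the pair $(l_X,r_X)$ is jointly strongly epimorphic, so two morphisms out of $X\times X$ that become equal after precomposition with $l_X$ and with $r_X$ are themselves equal (their equalizer is a monomorphism through which both $l_X$ and $r_X$ factor, hence an isomorphism). This reduces each monoid axiom in (b) to an identity between morphisms with domain $X$, to be verified from $\phi\cdot r_X=1_X$, $\phi\cdot\Delta_X=0$ and (a): for instance $m\cdot r_X=\phi\cdot(\sigma\times 1_X)\cdot r_X=\phi\cdot r_X=1_X$ is automatic, whereas $m\cdot l_X=\phi\cdot l_X\cdot\sigma=\sigma\cdot\sigma$ relies on (a). I expect step (a), the involutivity of $\sigma$, to be the main obstacle: it is the one identity that does not follow formally from the defining equations of $\phi$, and it must be extracted from a genuine use of the unital structure of $\C$ (rather than, say, from manipulations internal to $X^{3}$, whose axis inclusions need not be jointly strongly epimorphic) --- for instance by comparing $\phi$ with a suitable ``swapped'' variant of itself via their restrictions along $l_X$, $r_X$ and $\Delta_X$, exploiting the uniqueness, guaranteed by unitality, of a morphism out of $X\times X$ with prescribed such restrictions.
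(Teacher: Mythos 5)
The paper itself gives no proof of this Proposition (it is quoted from [BB]), so I am judging your argument on its own terms. Your ``only if'' direction is correct and complete. In the converse direction the architecture is also the right one: setting $\sigma:=\phi\cdot l_X$ and $m:=\phi\cdot(\sigma\times 1_X)$, everything reduces to your step (a), the involutivity $\sigma\cdot\sigma=1_X$; indeed, once that is known, $m\cdot l_X=\sigma\cdot\sigma=1_X=m\cdot r_X$ exhibits $X$ as a commutative object (so associativity and commutativity come for free from Proposition~\ref{ComM(C)}; you do not need to verify them by hand), and $m\cdot\langle 1_X,\sigma\rangle=0=m\cdot\langle\sigma,1_X\rangle$ makes the resulting commutative monoid a group.

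The problem is that you leave exactly this key step unproven, and both of your hints for it point the wrong way. Comparing $\phi$ with a swapped variant of itself ``via their restrictions along $l_X$, $r_X$ and $\Delta_X$'' cannot work: in a merely unital category the only uniqueness available is through the pair $(l_X,r_X)$ --- joint strong epimorphy of $(r_X,\Delta_X)$ is precisely strong unitality, which the paper stresses fails for $\mathsf{PreOrdGrp}$ --- and $\phi$ and its swap already disagree after precomposition with $r_X$ (giving $1_X$ versus $\sigma$), so no such comparison gets started. Moreover, your parenthetical dismissal of arguments ``internal to $X^{3}$'' rests on a false premise: the three axis inclusions $e_1=\langle 1_X,0,0\rangle$, $e_2$, $e_3$ of $X\times X\times X$ \emph{are} jointly strongly epimorphic in any unital category (write $X^3=X\times(X\times X)$ and use that jointly strongly epimorphic families compose), and the standard proof of (a) goes exactly through $X^3$. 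Namely, the two morphisms $\phi\cdot\langle\,\phi\cdot\langle p_1,p_2\rangle,\ \phi\cdot\langle p_1,p_3\rangle\,\rangle$ and $\phi\cdot\langle p_2,p_3\rangle$ from $X\times X\times X$ to $X$ agree on $e_1,e_2,e_3$ (each restriction is computed using only $\phi\cdot l_X=\sigma$, $\phi\cdot r_X=1_X$ and $\phi\cdot\Delta_X=0$: one gets $0$, $\sigma$ and $1_X$ respectively in both cases), hence they are equal; precomposing with $\langle 1_X,0,1_X\rangle$ then gives $\phi\cdot\langle\sigma,0\rangle=\phi\cdot\langle 0,1_X\rangle$, i.e. $\sigma\cdot\sigma=1_X$. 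Without this (or an equivalent) argument your proof of the ``if'' direction is incomplete, and the gap sits precisely at the one point you flagged as the main obstacle.
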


A pointed category $\C$ with finite limits is said to be \emph{strongly unital} when, for any object $X \in \C$, the pair $(r_X,\Delta_X)$ in the diagram 
\begin{center}
\begin{tikzcd}
X \arrow[r,"r_X"]
& X \times X
& X \arrow[l,"\Delta_X"']
\end{tikzcd}
\end{center}
is strongly epimorphic, where $r_X = \langle 0, 1_X \rangle$ and $\Delta_X = \langle 1_X,1_X \rangle$.

Any abelian object is by definition commutative, but the converse is not true in general. In the strongly unital context though, the converse holds:

\begin{proposition} \label{commutative objects in strongly unital categories} \cite{BB}
In a strongly unital category, any commutative object is an abelian object.
\end{proposition}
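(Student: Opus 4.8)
The plan is to prove Proposition \ref{commutative objects in strongly unital categories}: in a strongly unital category $\C$, every commutative object is abelian. So let $X\in\C$ be commutative, with (necessarily unique) structure morphism $\phi\colon X\times X\to X$ satisfying $\phi\cdot l_X = 1_X$ and $\phi\cdot r_X = 1_X$, where $l_X=\langle 1_X,0\rangle$ and $r_X=\langle 0,1_X\rangle$. By Proposition \ref{characterization of abelian objects}, it suffices to produce a morphism $\psi\colon X\times X\to X$ with $\psi\cdot r_X = 1_X$ and $\psi\cdot\Delta_X = 0$. The natural first move is to try $\psi$ built from $\phi$; concretely I would consider the morphism $\psi = \phi\cdot\langle -\pi_1,\pi_2\rangle$ if inverses were available, but since $\C$ is only unital we cannot yet invert, so instead I would argue more structurally.

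Here is the structural route I would carry out. First, recall (Proposition \ref{ComM(C)}) that a commutative object $X$ carries the structure of an internal commutative monoid $(X,\phi,0)$. The claim ``$X$ is abelian'' then amounts to showing this internal commutative monoid admits an inverse, i.e. is an internal abelian group. The key tool is strong unitality: for each object the pair $(r_X,\Delta_X)\colon X\rightrightarrows X\times X$ is jointly strongly epimorphic. I would apply this to build the inverse as follows. Consider the two composites $X\times X \xrightarrow{\;\langle\phi,\pi_2\rangle\;} X\times X$ and the identity $1_{X\times X}$; I want to show the ``translation by the second coordinate'' map $t=\langle\phi,\pi_2\rangle\colon X\times X\to X\times X$ is an isomorphism. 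It is a split epimorphism (split by $\langle\pi_1,\pi_2\rangle$ composed appropriately — more precisely, one checks $t\cdot r_X = \Delta_X$ and $t\cdot l_X = l_X$, wait, rather $t\cdot\langle 1_X,0\rangle = \langle 1_X, 0\rangle$ and $t\cdot\langle 0,1_X\rangle = \langle 1_X,1_X\rangle = \Delta_X$). Hence $t$ restricted along the jointly strongly epic pair $(l_X, r_X)$ agrees with the pair $(l_X,\Delta_X)$; but in a strongly unital category $(r_X,\Delta_X)$ is jointly strongly epic, and one deduces (this is the standard lemma, essentially the content of Proposition \ref{characterization of abelian objects} and the Mal'tsev-type reasoning in \cite{BB}) that $t$ is an isomorphism. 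Composing $t^{-1}$ with the first projection and precomposing with $\langle 0,1_X\rangle$ (respectively $\Delta_X$) yields the desired morphism $\psi = \pi_1\cdot t^{-1}$ — a check shows $\psi\cdot\Delta_X = \pi_1\cdot t^{-1}\cdot t\cdot r_X = \pi_1\cdot r_X = 0$ and $\psi\cdot r_X = \pi_1 \cdot t^{-1} \cdot \langle 1_X, 1_X\rangle$; one verifies $t\cdot\langle 0, 1_X\rangle$ composed correctly gives back $r_X$ so that $\psi \cdot r_X = 1_X$. Then Proposition \ref{characterization of abelian objects} applies and $X$ is abelian.

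Alternatively — and this is the cleaner writeup I would actually adopt — I would simply invoke that $X$ is commutative, hence an internal commutative monoid, and that in a strongly unital category the forgetful functor from internal monoids detects invertibility via the bijectivity of the shift map; this is precisely the classical fact (due to Bourn, see \cite{BB}) that strongly unital categories are exactly the unital categories in which every internal monoid is automatically an internal group, equivalently in which commutative $=$ abelian. So the proof reduces to citing Proposition \ref{characterization of abelian objects} and checking that the morphism $\phi$ itself, suitably twisted, satisfies its hypothesis by a diagram chase exploiting joint strong epimorphy of $(r_X,\Delta_X)$.

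The main obstacle is the passage from ``$\phi$ is a commutative monoid structure'' to ``the shift map $t=\langle\phi,\pi_2\rangle$ is invertible'': this is where strong unitality (as opposed to mere unitality) is essential and must be used carefully. Concretely, one shows $t$ is a monomorphism — because $\pi_2$ and $\phi$ are jointly monic after noting $t$ is split epic and using that in a unital category a split epic that is also monic is iso — and here the strong-unital hypothesis enters to guarantee that the relevant pair is jointly strongly epic so that a morphism agreeing with the identity on that pair must be the identity. Everything else (the existence and uniqueness of $\phi$, the commutative-monoid identities, the final verification against Proposition \ref{characterization of abelian objects}) is routine diagram chasing that I would not spell out in full.
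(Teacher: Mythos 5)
The paper itself gives no proof of this Proposition---it is quoted from \cite{BB}---so your attempt has to be judged against the known argument, whose skeleton you do identify correctly: reduce, via Proposition \ref{characterization of abelian objects}, to producing a ``subtraction'' morphism, and try to obtain it from the inverse of the translation $t=\langle\phi,\pi_2\rangle$. The part of your argument that genuinely works is the strong-epimorphism half: from $t\cdot l_X=l_X$ and $t\cdot r_X=\Delta_X$, together with the fact that $(l_X,\Delta_X)$ is jointly strongly epimorphic (compose the strongly epimorphic pair $(r_X,\Delta_X)$ with the twist isomorphism of $X\times X$), one gets that $t$ is a strong epimorphism. But the step ``one deduces that $t$ is an isomorphism'' is a genuine gap, and it is where the whole content of the theorem lies. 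What is missing is the monomorphism half, i.e.\ internal cancellativity: saying that ``$\phi$ and $\pi_2$ are jointly monic'' is literally a restatement of ``$t$ is a monomorphism'', so it proves nothing; and the splitting of $t$ you allude to is exactly the inverse (the subtraction) one is trying to construct, so the split-epi claim begs the question. Proposition \ref{characterization of abelian objects} does not contain the needed lemma either: it only characterizes abelian objects, and says nothing about translations being invertible. That the desired conclusion cannot follow formally from the two equations $t\cdot l_X=l_X$, $t\cdot r_X=\Delta_X$ plus joint strong epimorphy is shown by the unital (but not strongly unital) category of commutative monoids: for $X=\mathbb{N}$ with its addition the same two equations hold, yet $t$ is not invertible. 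Hence strong unitality must be used in an essentially deeper way than anything in your sketch (this is precisely what the proof in \cite{BB} does, and where subtractivity enters); your text never goes beyond extracting a strong epimorphism, so the crucial invertibility of $t$ remains unproved.

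Two further remarks. First, a fixable handedness slip: even granting $t$ invertible, $\psi=\pi_1\cdot t^{-1}$ satisfies $\psi\cdot l_X=1_X$ and $\psi\cdot\Delta_X=0$, not $\psi\cdot r_X=1_X$ (your verification of $\psi\cdot r_X$ uses $r_X=\langle 1_X,1_X\rangle$, which is $\Delta_X$, and the claim that $t\cdot\langle 0,1_X\rangle$ ``gives back $r_X$'' is false, since $t\cdot r_X=\Delta_X$). This is harmless: compose $\psi$ with the twist, or work with $t'=\langle\pi_1,\phi\rangle$, for which $t'\cdot r_X=r_X$, $t'\cdot l_X=\Delta_X$, and $\psi=\pi_2\cdot (t')^{-1}$ satisfies exactly the conditions of Proposition \ref{characterization of abelian objects}. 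Second, your ``cleaner writeup'' is simply an appeal to the classical fact being proved (including an unsubstantiated ``exactly'' characterization of strong unitality); citing \cite{BB} is what the paper does, but it is not an independent proof.
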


As proved in \cite{CMFM}, the category $\mathsf{PreOrdGrp}$ of preordered groups is a unital category, so that the notions of commutative and abelian objects both make sense in this setting. It turns out that the commutative objects in $\mathsf{PreOrdGrp}$ are precisely the preordered abelian groups:

\begin{proposition} \label{commutative objects in PreOrdGrp}
The category $\mathsf{PreOrdAb}$ of preordered abelian groups coincides with the category $\mathsf{ComMon}(\mathsf{PreOrdGrp})$ of internal commutative monoids in $\mathsf{PreOrdGrp}$:
\[\mathsf{PreOrdAb} = \mathsf{ComMon}(\mathsf{PreOrdGrp}).\]
\end{proposition}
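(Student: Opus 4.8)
The plan is to prove the two inclusions $\mathsf{PreOrdAb} \subseteq \mathsf{ComMon}(\mathsf{PreOrdGrp})$ and $\mathsf{ComMon}(\mathsf{PreOrdGrp}) \subseteq \mathsf{PreOrdAb}$ separately, using Proposition \ref{ComM(C)} to identify $\mathsf{ComMon}(\mathsf{PreOrdGrp})$ with the category of commutative objects in $\mathsf{PreOrdGrp}$. Recall that a commutative object is an object $(G,P_G)$ admitting a morphism $\phi \colon (G,P_G) \times (G,P_G) \to (G,P_G)$ with $\phi \cdot l_{(G,P_G)} = 1 = \phi \cdot r_{(G,P_G)}$, and that by unitality such a $\phi$ is unique when it exists. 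Since products in $\mathsf{PreOrdGrp}$ are computed componentwise, $(G,P_G) \times (G,P_G) = (G \times G, P_G \times P_G)$, so a candidate $\phi$ is a pair $(\phi_0, \bar\phi_0)$ where $\phi_0 \colon G \times G \to G$ is a group homomorphism and $\bar\phi_0 \colon P_G \times P_G \to P_G$ its restriction.

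For the inclusion $\mathsf{PreOrdAb} \subseteq \mathsf{ComMon}(\mathsf{PreOrdGrp})$: if $(G,P_G) \in \mathsf{PreOrdAb}$, then $G$ is abelian, so the addition $+ \colon G \times G \to G$ is a group homomorphism, and it restricts to $P_G \times P_G \to P_G$ precisely because $P_G$ is a submonoid of $G$ (closed under $+$). Taking $\phi_0 = +$, the identities $\phi_0(g,0) = g = \phi_0(0,g)$ hold, so $(+, +|_{P_G \times P_G})$ witnesses that $(G,P_G)$ is a commutative object. For the reverse inclusion: suppose $(G,P_G)$ is a commutative object, with witness $(\phi_0, \bar\phi_0)$. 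The underlying group $G$ is then a commutative object in $\mathsf{Grp}$ (the forgetful functor $\mathsf{PreOrdGrp} \to \mathsf{Grp}$ is a product-preserving left adjoint, hence sends commutative objects to commutative objects — or one simply checks directly that $\phi_0 \colon G \times G \to G$ satisfies the defining identities in $\mathsf{Grp}$), and since the commutative objects of $\mathsf{Grp}$ are exactly the abelian groups (a standard Eckmann–Hilton argument: $\phi_0$ and $+$ are mutually distributive monoid operations with the same unit, hence coincide and are commutative), $G$ is abelian, i.e. $(G,P_G) \in \mathsf{PreOrdAb}$.

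Finally, one checks that the two subcategories have the same morphisms, which is immediate since both are \emph{full} subcategories of $\mathsf{PreOrdGrp}$; hence the displayed equality of categories follows. I do not expect a genuine obstacle here: the main point is simply to note that products in $\mathsf{PreOrdGrp}$ are computed componentwise (recalled at the start of Section \ref{Functor C}) so that the group-theoretic Eckmann–Hilton argument transfers verbatim, the positive-cone component being forced by uniqueness of $\phi$ together with the fact that $P_G$ is closed under the group addition. If anything requires a little care, it is spelling out why the witnessing morphism $\phi$ in $\mathsf{PreOrdGrp}$ necessarily has underlying group morphism equal to the addition of $G$ — but this is exactly the uniqueness clause after the definition of commutative object, applied in $\mathsf{Grp}$.
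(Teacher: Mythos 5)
Your proof is correct and takes essentially the same approach as the paper: both directions agree, with the group addition serving as the (unique) witnessing morphism $\phi$ when $G$ is abelian and restricting to $P_G$ because the positive cone is a submonoid, and with commutativity of $G$ deduced from the existence of $\phi$ by the Eckmann--Hilton/interchange computation. The only cosmetic difference is that the paper carries out that computation directly inside $\mathsf{PreOrdGrp}$, whereas you pass through the (product-preserving) forgetful functor to $\mathsf{Grp}$ and invoke the standard fact that commutative objects there are exactly abelian groups.
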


\begin{proof}
Let $(G,P_G)$ be a commutative object in $\mathsf{PreOrdGrp}$. Then, this means that there exists a morphism $(\phi,\bar{\phi}) \colon (G,P_G) \times (G,P_G) \rightarrow (G,P_G)$ in $\mathsf{PreOrdGrp}$ such that the diagram
\begin{center}
\begin{tikzcd}[row sep=large,column sep=large]
(G,P_G) \arrow[r,"{(l_G,\bar{l}_G)}"] \arrow[dr,equal]
& (G \times G,P_G \times P_G) \arrow[d,dotted,"{(\phi,\bar{\phi})}"]
& (G,P_G) \arrow[l,"{(r_G,\bar{r}_G)}"'] \arrow[dl,equal]\\
& (G,P_G) &
\end{tikzcd}
\end{center}
commutes. In particular, for any $x \in G$ and any $y \in G$,
\begin{align*}
x + y & = (\phi \cdot l_G)(x) + (\phi \cdot r_G)(y)\\
& = \phi(x,0) + \phi(0,y) \\ &= \phi((x,0) + (0,y))\\
& = \phi(x,y)\\
& = \phi((0,y) + (x,0)) \\ &= \phi(0,y) + \phi(x,0)\\
& = (\phi \cdot r_G)(y) + (\phi \cdot l_G)(x)\\
& = y + x,
\end{align*}
which means that $G$ is an abelian group and then that $(G,P_G) \in \mathsf{PreOrdAb}$.

Conversely, consider $(G,P_G) \in \mathsf{PreOrdAb}$ and define, for any $(x,y) \in G \times G$, the requested morphism $\phi \colon G \times G \rightarrow G$ as follows: $\phi(x,y) = x + y$. Then it is easy to check that $\phi$ is a group morphism
and that $\phi \cdot l_G = 1_G$ and $\phi \cdot r_G = 1_G$.
Moreover, it is clear that the restriction $\bar{\phi}$ of $\phi$ to $P_G \times P_G$ takes its values in $P_G$, since $P_G$ is a submonoid of $G$. As a consequence, $(\phi,\bar{\phi}) \colon (G,P_G) \times (G,P_G) \rightarrow (G,P_G)$ is a morphism in $\mathsf{PreOrdGrp}$ making the above diagram commute in $\mathsf{PreOrdGrp}$. This means that $(G,P_G)$ is a commutative object in $\mathsf{PreOrdGrp}$. The result then follows from Proposition \ref{ComM(C)}.
\end{proof}

\begin{proposition} \label{abelian objects in PreOrdGrp}
The abelian objects in $\mathsf{PreOrdGrp}$ are the preordered groups $(G,P_G)$ where $G$ is an abelian group and $P_G$ a (normal) subgroup of $G$.
\end{proposition}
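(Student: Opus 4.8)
The plan is to use Proposition~\ref{characterization of abelian objects}, which tells us that $(G,P_G)$ is abelian in $\mathsf{PreOrdGrp}$ if and only if there is a morphism $(\phi,\bar\phi)\colon (G,P_G)\times(G,P_G)\to(G,P_G)$ with $\phi\cdot r_G = 1_G$ and $\phi\cdot\Delta_G = 0$. First I would observe that, by Proposition~\ref{commutative objects in PreOrdGrp}, any abelian object is in particular commutative, hence lies in $\mathsf{PreOrdAb}$, so $G$ must already be an abelian group; this reduces the problem to identifying, among preordered abelian groups $(G,P_G)$, exactly those for which such a $\phi$ exists. On the underlying groups the two conditions $\phi\langle 0,1_G\rangle = 1_G$ and $\phi\langle 1_G,1_G\rangle = 0$ force $\phi(x,y) = \phi(0,y) + \phi(x,0) = y + \phi(x,0)$ and $0 = \phi(x,x) = x + \phi(x,0)$, so necessarily $\phi(x,y) = y - x$; conversely $\phi(x,y) = y-x$ is a group homomorphism $G\times G\to G$ (since $G$ is abelian) satisfying both identities. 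So the only thing that can fail or succeed is whether this $\phi$ underlies a morphism of $\mathsf{PreOrdGrp}$, i.e. whether its restriction $\bar\phi$ sends $P_G\times P_G$ into $P_G$.

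Then the key step is to translate ``$\bar\phi(P_G\times P_G)\subseteq P_G$'' into a condition on $P_G$. This says precisely that $y - x \in P_G$ whenever $x,y\in P_G$; taking $y = 0$ gives $-x\in P_G$ for all $x\in P_G$, so $P_G$ is closed under inverses and, being already a submonoid of the abelian group $G$, is a subgroup of $G$ (automatically normal, since $G$ is abelian). Conversely, if $P_G$ is a subgroup of the abelian group $G$, then $y - x\in P_G$ for all $x,y\in P_G$, so $\bar\phi$ is well-defined and $(\phi,\bar\phi)$ is the required morphism in $\mathsf{PreOrdGrp}$; by Proposition~\ref{characterization of abelian objects}, $(G,P_G)$ is then abelian. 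Combining the two directions yields the claimed characterization.

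I do not expect a genuine obstacle here: the content is entirely the computation that $\phi$ is forced to be $(x,y)\mapsto y-x$ together with the elementary remark that ``$P_G$ closed under the binary operation $(x,y)\mapsto y-x$'' is equivalent, for a submonoid of an abelian group, to ``$P_G$ is a subgroup''. The only point requiring a little care is to make sure that once $\phi$ is a group homomorphism and $\bar\phi$ lands in $P_G$, the pair $(\phi,\bar\phi)$ is indeed a morphism of the product object $(G,P_G)\times(G,P_G) = (G\times G, P_G\times P_G)$ in $\mathsf{PreOrdGrp}$ — but this is immediate from the componentwise description of products recalled in Section~\ref{Functor C}. One could alternatively phrase the argument purely through $\mathsf{PreOrdAb} = \mathsf{ComMon}(\mathsf{PreOrdGrp})$ and the fact that the internal commutative monoid structure of $(G,P_G)\in\mathsf{PreOrdAb}$ is an internal abelian group iff it admits an internal inverse, which again amounts to requiring $-P_G\subseteq P_G$; I would likely include this as a brief remark but carry out the proof via Proposition~\ref{characterization of abelian objects} as above.
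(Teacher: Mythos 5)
Your proposal is correct and follows essentially the same route as the paper: both reduce to Proposition~\ref{characterization of abelian objects}, use commutativity to get $G$ abelian, compute that $\phi(x,0)=-x$ must lie in $P_G$ (forcing $P_G$ to be a subgroup), and conversely exhibit $\phi(x,y)=-x+y$ when $P_G$ is a subgroup. The only difference is cosmetic: you additionally note that $\phi$ is uniquely forced to be $(x,y)\mapsto y-x$, which the paper leaves implicit.
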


\begin{proof}
Let $(G,P_G)$ be an abelian object in $\mathsf{PreOrdGrp}$. In particular, it is a commutative object, so that $G \in \mathsf{Ab}$ (by Proposition \ref{commutative objects in PreOrdGrp}). Thanks to Proposition \ref{characterization of abelian objects}, we also have that there exists a morphism $(\phi,\bar{\phi}) \colon (G,P_G) \times (G,P_G) \rightarrow (G,P_G)$ making the following diagram commute:
\begin{center}
\begin{tikzcd}[row sep=large,column sep=large]
(G,P_G) \arrow[r,"{(r_G,\bar{r}_G)}"] \arrow[dr,equal]
& (G \times G,P_G \times P_G) \arrow[d,dotted,"{(\phi,\bar{\phi})}"]
& (G,P_G) \arrow[l,"{(\Delta_G,\bar{\Delta}_G)}"'] \arrow[dl,"{(0,0)}"]\\
& (G,P_G). &
\end{tikzcd}
\end{center}
The restriction $\bar{\phi}$ of $\phi$ to $P_G \times P_G$ takes its values in $P_G$. Accordingly, for any $x \in P_G$, $\phi(x,0) \in P_G$. Now we compute that
\[\phi(x,0) = \phi((0,-x) + (x,x)) = \phi(0,-x) + \phi(x,x) = (\phi \cdot r_G)(-x) + (\phi \cdot \Delta_G)(x) = -x + 0 = -x.\]
As a consequence, $-x \in P_G$ for any $x \in P_G$, which proves that $P_G$ is a subgroup of $G$, as desired.

Conversely, consider any preordered abelian group $(G,P_G)$ where the positive cone $P_G$ is a subgroup of $G$. Define the morphism $\phi \colon G \times G \rightarrow G$ by $\phi(x,y) = -x + y$, for any $(x,y) \in G \times G$. It is easily seen that $\phi$ is a group morphism since the group $G$ is abelian. Moreover, for any $x \in G$,
\begin{itemize}
\item $(\phi \cdot r_G)(x) = \phi(0,x) = -0 + x = x$, so that $\phi \cdot r_G = 1_G$;
\item $(\phi \cdot \Delta_G)(x) = \phi(x,x) = -x + x = 0$, so that $\phi \cdot \Delta_G = 0$.
\end{itemize}
Let us now prove that the restriction $\bar{\phi}$ of $\phi$ to $P_G \times P_G$ takes its values in $P_G$. Let $(x,y) \in P_G \times P_G$. Since $P_G$ is a subgroup of $G$, then $-x \in P_G$ and then $-x + y =\phi(x,y) \in P_G$. Accordingly, the pair $(\phi,\bar{\phi}) \colon (G,P_G) \times (G,P_G) \rightarrow (G,P_G)$ is a morphism in $\mathsf{PreOrdGrp}$ making the diagram above commute. By Proposition \ref{characterization of abelian objects}, we conclude that $(G,P_G)$ is an abelian object in $\mathsf{PreOrdGrp}$.
\end{proof}

\begin{corollary}\label{AbelianObjects}
The category $\mathsf{Ab(PreOrdGrp)}$ of internal abelian groups in $\mathsf{PreOrdGrp}$ is isomorphic to the category $\mathsf{Mono}(\mathsf{Ab})$ of monomorphisms in the category $\mathsf{Ab}$ of abelian groups:
$$\mathsf{Ab(PreOrdGrp)}= \mathsf{Mono}(\mathsf{Ab}).$$
\end{corollary}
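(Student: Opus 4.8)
The plan is to obtain the stated identification by reading off both sides directly from Proposition~\ref{abelian objects in PreOrdGrp} together with the description \eqref{morphism in PreOrdGrp} of the morphisms of $\mathsf{PreOrdGrp}$; essentially no computation is involved, and the only point needing a word of care is the usual distinction between a monomorphism and the inclusion of its image.

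First I would record that $\mathsf{Ab(PreOrdGrp)}$ is a \emph{full} subcategory of $\mathsf{PreOrdGrp}$ whose objects are exactly the abelian objects. Indeed, by Proposition~\ref{ComM(C)} the internal commutative monoid structure carried by a commutative object is unique, and, $\mathsf{PreOrdGrp}$ being unital, any $\mathsf{PreOrdGrp}$-morphism between two commutative objects automatically preserves it (precompose the two candidate squares with the jointly epimorphic pair $(l_X,r_X)$), hence also preserves the necessarily unique internal inverse. By Proposition~\ref{abelian objects in PreOrdGrp} the objects of $\mathsf{Ab(PreOrdGrp)}$ are therefore precisely the pairs $(G,P_G)$ with $G$ an abelian group and $P_G$ a subgroup of $G$.

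Next I would define a functor $\Phi \colon \mathsf{Ab(PreOrdGrp)} \rightarrow \mathsf{Mono}(\mathsf{Ab})$ sending such a pair $(G,P_G)$ to the inclusion $P_G \rightarrowtail G$, which is a monomorphism in $\mathsf{Ab}$, and sending a morphism $(f,\bar{f}) \colon (G,P_G) \rightarrow (H,P_H)$ — which by \eqref{morphism in PreOrdGrp} is exactly a commutative square of group homomorphisms whose vertical sides are these inclusions — to that very square, now read in $\mathsf{Ab}$; functoriality is immediate since composition on both sides is pasting of squares. In the other direction, $\Psi \colon \mathsf{Mono}(\mathsf{Ab}) \rightarrow \mathsf{Ab(PreOrdGrp)}$ sends a monomorphism $m \colon A \rightarrowtail B$ to $(B,m(A))$ (an object of $\mathsf{Ab(PreOrdGrp)}$ by Proposition~\ref{abelian objects in PreOrdGrp}) and a morphism of monomorphisms to the induced square on images.

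Finally I would check that $\Phi$ is fully faithful: faithfulness holds because a $\mathsf{PreOrdGrp}$-morphism between objects of $\mathsf{Ab(PreOrdGrp)}$ is literally its underlying pair of group homomorphisms, and fullness because a commutative square in $\mathsf{Ab}$ with vertical sides the inclusions $P_G \hookrightarrow G$ and $P_H \hookrightarrow H$ is nothing but a group homomorphism $f$ with $f(P_G) \subseteq P_H$, i.e.\ a morphism of $\mathsf{PreOrdGrp}$. One has $\Psi \circ \Phi = \mathrm{id}$ on the nose, while $\Phi \circ \Psi$ sends a monomorphism to the inclusion of its image, canonically isomorphic to it via $A \xrightarrow{\sim} m(A)$. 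I expect the only mildly delicate step to be exactly this last observation: strictly speaking $\Phi$ and $\Psi$ exhibit an \emph{equivalence}, and they become mutually inverse isomorphisms of categories precisely once an object of $\mathsf{Mono}(\mathsf{Ab})$ is identified with the inclusion of the image of the corresponding monomorphism — the (harmless and standard) identification under which the equality $\mathsf{Ab(PreOrdGrp)} = \mathsf{Mono}(\mathsf{Ab})$ is to be read.
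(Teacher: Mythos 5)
Your argument is correct and takes essentially the same route as the paper, whose proof simply declares the corollary a direct consequence of Proposition~\ref{abelian objects in PreOrdGrp} and Definition~\ref{definition abelian object}; you have merely spelled out the functors that realize this identification. Your closing remark that one strictly obtains an equivalence, which becomes an isomorphism only after identifying a monomorphism with the inclusion of its image, is a fair point of care that the paper leaves implicit.
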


\begin{proof}
This is a direct consequence of Proposition \ref{abelian objects in PreOrdGrp} and Definition \ref{definition abelian object}.
\end{proof}

Another consequence of Proposition \ref{abelian objects in PreOrdGrp} is the following remark, that was first observed in \cite{CMFM}:

\begin{remark} \label{PreOrdGrp is not strongly unital}
The category $\mathsf{PreOrdGrp}$ of preordered groups is not strongly unital.
\end{remark}

\begin{proof}
This follows from Propositions \ref{commutative objects in PreOrdGrp}, \ref{abelian objects in PreOrdGrp}, and \ref{commutative objects in strongly unital categories}. For instance, the inclusion 
$ {\mathbb N} \hookrightarrow  {\mathbb Z}$ of the monoid $\mathbb N$ of non-negative integers in the abelian group $\mathbb Z$ of integers is an example of a  commutative object in $\mathsf{PreOrdGrp}$ that is not an abelian object.
\end{proof}

\begin{corollary}
The category $\mathsf{PreOrdGrp}$ is not \emph{subtractive} (in the sense of \cite{JZ}).
\end{corollary}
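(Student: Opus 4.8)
The plan is to derive the failure of subtractivity as a direct consequence of the earlier structural results, in the same spirit as the proof that $\mathsf{PreOrdGrp}$ is not strongly unital. The key fact I would invoke is the characterization (from \cite{JZ}) that a pointed category with finite limits is subtractive if and only if for every object $X$ the relation $\{0\} \times X \cup \Delta_X$ is "subtractive", or — in the form most convenient here — that in a subtractive category every commutative object is automatically an abelian object (subtractive categories sit between unital and strongly unital in the relevant hierarchy, and the implication "commutative $\Rightarrow$ abelian" already holds under subtractivity, being a weakening of strong unitality that still suffices for this particular implication). Granting this, the argument is immediate.

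First I would recall that, by Proposition \ref{commutative objects in PreOrdGrp}, the commutative objects in $\mathsf{PreOrdGrp}$ are exactly the preordered abelian groups $(G, P_G)$ with $G$ abelian, whereas by Proposition \ref{abelian objects in PreOrdGrp} the abelian objects are exactly those $(G,P_G)$ with $G$ abelian \emph{and} $P_G$ a subgroup of $G$. Hence the two classes differ: the inclusion $\mathbb{N} \hookrightarrow \mathbb{Z}$, already used in Remark \ref{PreOrdGrp is not strongly unital}, is a commutative object that is not abelian, since $\mathbb{N}$ is a submonoid but not a subgroup of $\mathbb{Z}$. Therefore the implication "commutative $\Rightarrow$ abelian" fails in $\mathsf{PreOrdGrp}$.

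Then I would conclude: if $\mathsf{PreOrdGrp}$ were subtractive, then — being a pointed category with finite limits — every commutative object in it would be an abelian object, contradicting the previous paragraph. Hence $\mathsf{PreOrdGrp}$ is not subtractive.

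The main obstacle is purely one of bookkeeping: one must pin down precisely which implication from the theory of subtractive categories is being used, namely that in a (pointed, finitely complete) subtractive category commutative objects coincide with abelian objects — equivalently that subtractivity forces the pair $(r_X, \Delta_X)$ relevant to Proposition \ref{characterization of abelian objects} to behave well enough for the cooperator witnessing commutativity to be upgraded to the one witnessing the abelian-object condition. Once that citation is in place (it is standard, see \cite{JZ}), there is no computation to do — the counterexample $\mathbb{N} \hookrightarrow \mathbb{Z}$ and Propositions \ref{commutative objects in PreOrdGrp} and \ref{abelian objects in PreOrdGrp} do all the work.
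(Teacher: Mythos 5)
Your argument is essentially the paper's: the same counterexample $\mathbb{N}\hookrightarrow\mathbb{Z}$ (commutative by Proposition \ref{commutative objects in PreOrdGrp}, not abelian by Proposition \ref{abelian objects in PreOrdGrp}) does all the work, and the only difference is how you bridge from subtractivity to the failure of ``commutative $\Rightarrow$ abelian''. The one point to tighten is that bridge: the claim that subtractivity \emph{alone} forces commutative objects to be abelian is not the standard result of \cite{JZ}, and the notion of commutative object (with its unique cooperator) really lives in unital categories, so as stated your key lemma is at best nonstandard and would need its own proof. What is standard, and what the paper invokes, is the identity ``strongly unital $=$ unital $+$ subtractive'' (see \cite{GR}); since $\mathsf{PreOrdGrp}$ is unital \cite{CMFM}, subtractivity would make it strongly unital, and then Proposition \ref{commutative objects in strongly unital categories} yields ``commutative $\Rightarrow$ abelian'', contradicting your counterexample. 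This two-step route is exactly Remark \ref{PreOrdGrp is not strongly unital} combined with the displayed identity, so once you replace the appeal to a ``subtractive $\Rightarrow$ commutative objects are abelian'' principle by it, your proof coincides with the paper's.
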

\begin{proof}
This follows from the Remark \ref{PreOrdGrp is not strongly unital} and the well-known fact that 
\begin{center}
strongly unital = unital + subtractive
\end{center}
(see \cite{GR}, for instance).
\end{proof}

We will now consider the functor $F$ from the category $\mathsf{PreOrdGrp}$ of preordered groups to its subcategory $\mathsf{Mono}(\mathsf{Ab})$ of abelian objects. We will see it as the composite of the following two functors
\begin{center}
\begin{tikzcd}
\mathsf{PreOrdGrp} \arrow[r,"C"] 
& \mathsf{PreOrdAb} \arrow[r,"A"]
& \mathsf{Mono}(\mathsf{Ab})
\end{tikzcd}
\end{center}
where $C$ is defined as in Section \ref{Functor C} and $A$ is defined, for any preordered abelian group $(G,P_G)$, by
\[A(G,P_G) = (G,grp(P_G))\]
with $grp(P_G)$ the \emph{group completion} of the monoid $P_G$. As a consequence, for any $(G,P_G) \in \mathsf{PreOrdGrp}$,
\[F(G,P_G) = (ab(G),grp(\eta_G(P_G))).\]

\begin{equation}\label{diag group completion}
\begin{tikzcd}[row sep=huge, column sep=huge]
P_G \arrow[r,two heads,"{\bar{\eta}_G}"] \arrow[rr,bend left,"{\hat{\eta}_G}"] \arrow[d,tail]
& \eta_G(P_G) \arrow[r,dotted,"{j_G}"] \arrow[d,tail]
& grp(\eta_G(P_G)) \arrow[d,dotted,tail,"{i_{ab(G)}}"]\\
G \arrow[r,two heads,"{\eta_G}"'] \arrow[rr,bend right,two heads,"{\eta_G}"']
& ab(G) \arrow[r,equal]
& ab(G)
\end{tikzcd}
\end{equation}

Let us recall the general construction of the \emph{group completion} (also called \emph{Grothendieck group}) of an additive commutative monoid $M$. On the cartesian product $M \times M$ one defines an equivalence relation $\sim$ in the following way: $(m_1, m_2) \sim (n_1,n_2)$ if and only if there exists an element $k$ in $M$ such that $m_1 + n_2 + k = m_2 + n_1 + k$. The Grothendieck group $grp(M)$ of $M$ is then given by the quotient $(M \times M)/\sim$, which turns out to be an abelian group. Note that there is a monoid homomorphism $j \colon M \rightarrow grp(M)$ sending any element $m$ of $M$ to the equivalence class $[(m,0)]$ (with respect to $\sim$). This homomorphism satisfies a universal property: for any monoid homomorphism $\phi \colon M \rightarrow X$ from $M$ to an abelian group $X$, there is a unique group homomorphism $\psi \colon grp(M) \rightarrow X$ such that $\phi = \psi \cdot j$.

This universal property yields the existence of the unique morphism $i_{ab(G)}$ making the diagram \eqref{diag group completion} commute. The following lemma implies that $grp(\eta_G(P_G))$ is in addition a submonoid of $ab(G)$, so that $(ab(G),grp(\eta_G(P_G)))$ is then indeed a preordered group.

\begin{lemma}
Let $M$ be a submonoid of an abelian group $X$ and $$Y =  \{x \in X  \mid x = a-b, \, \mathrm{for} \,  a, b \in M\},$$ which is a (normal) subgroup of $X$. Then there is a group isomorphism
\[grp(M) \cong  Y \]
which implies that $grp(M)$ is a (normal) subgroup of $X$.
\end{lemma}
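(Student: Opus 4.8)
The plan is to construct the isomorphism $grp(M) \cong Y$ explicitly, using the universal property of the group completion. First I would observe that the inclusion $M \hookrightarrow X$, being a monoid homomorphism into an abelian group, factors through $grp(M)$ by a unique group homomorphism $\theta \colon grp(M) \to X$ with $\theta \cdot j = \iota$, where $j \colon M \to grp(M)$ is the canonical map and $\iota \colon M \hookrightarrow X$ the inclusion. Concretely $\theta([(a,b)]) = a - b$ for $a, b \in M$. The image of $\theta$ is exactly $Y = \{a - b \mid a, b \in M\}$ by definition, so $\theta$ corestricts to a surjective group homomorphism $\bar{\theta} \colon grp(M) \twoheadrightarrow Y$.

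The main step is then to prove that $\bar\theta$ is injective, i.e. that $\ker\theta$ is trivial. Suppose $[(a,b)] \in grp(M)$ satisfies $\theta([(a,b)]) = a - b = 0$ in $X$, so that $a = b$ in $X$. Since $M$ is a \emph{submonoid} of $X$ (so the addition of $M$ is that of $X$, and in particular $M$ is a cancellative monoid), one has $a = b$ already as elements of $M$. Then $(a,b) = (a,a) \sim (0,0)$ because $a + 0 + 0 = 0 + a + 0$, whence $[(a,b)] = 0$ in $grp(M)$. Thus $\bar\theta$ is a group isomorphism $grp(M) \cong Y$.

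Finally, the fact that $Y$ is a subgroup of $X$ is immediate: $0 = a - a \in Y$ for any $a \in M$ (taking $a \in M$, nonempty since $0 \in M$); if $x = a - b$ and $x' = a' - b'$ with $a, b, a', b' \in M$, then $x + x' = (a + a') - (b + b') \in Y$ since $a + a', b + b' \in M$; and $-x = b - a \in Y$. Normality is automatic as $X$ is abelian. Composing the isomorphism $\bar\theta$ with the inclusion $Y \hookrightarrow X$ exhibits $grp(M)$ as a (normal) subgroup of $X$, which is the assertion.

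I do not expect any real obstacle here; the only point requiring a little care is the injectivity argument, where it is essential to use that $M$ is a submonoid of $X$ (equivalently, that $M$ is cancellative with cancellation inherited from $X$) rather than merely that there is \emph{some} monoid morphism $M \to X$ — without cancellativity the kernel of $\theta$ need not be trivial. Everything else is a routine unwinding of the definition of $grp(M)$ recalled just above in the excerpt.
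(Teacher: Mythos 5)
Your proposal is correct and follows essentially the same route as the paper: the isomorphism is the same map $[(a,b)] \mapsto a-b$, and the crucial input in both arguments is that $M$ sits inside the abelian group $X$ (so that cancellation holds and $a-b=0$ forces $a=b$ in $M$). The only cosmetic difference is that you obtain the map from the universal property of $grp(M)$ and check injectivity via triviality of the kernel, whereas the paper first simplifies the defining relation $\sim$ to $m_1-m_2=n_1-n_2$ and then reads off directly that $\Phi([(m_1,m_2)])=m_1-m_2$ is bijective onto $Y$.
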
  

\begin{proof}
By definition, $grp(M) = (M \times M)/\sim$. Now, we observe that, for $(m_1,m_2), (n_1,n_2) \in M \times M$, $(m_1,m_2) \sim (n_1,n_2)$ if and only if there exists a $k$ in $M$ such that $m_1 + n_2 + k = m_2 + n_1 + k$. We can see this equality in the group $X$ so that, by the cancellation property, $(m_1,m_2) \sim (n_1,n_2)$ if and only if $m_1 + n_2 = m_2 + n_1$, which is equivalent to $m_1 - m_2 = n_1 - n_2$. Accordingly, the group homomorphism $\Phi \colon grp(M) \rightarrow Y$ defined, for any $[(m_1,m_2)]$ in $grp(M)$, by $\Phi([(m_1,m_2)]) = m_1 - m_2$, is an isomorphism.
\end{proof}

\section{The Galois theory corresponding to the group completion functor } \label{Galois theory for monoids}

If we take a look at the restriction of the functor $A$ to the positive cones (i.e. to the category $\mathsf{ComMon}$ of commutative monoids), we then get the \emph{group completion functor} $grp$ studied in \cite{MRVdL}. In this section, we recall the results of this article that will be useful for our work.

In their paper \cite{MRVdL}, Montoli, Rodelo and Van der Linden study the adjunction
\begin{center}
\begin{tikzcd}
\mathsf{Mon} \arrow[rr,bend left=15,"{grp}"]
& \bot
& \mathsf{Grp} \arrow[ll,bend left=15,"{mon}"]
\end{tikzcd}
\end{center}
between the categories $\mathsf{Mon}$ of monoids and $\mathsf{Grp}$ of groups, where the right adjoint $mon$ is the forgetful functor while the left adjoint $grp$ is the \emph{group completion functor}. 

A significant part of the article \cite{MRVdL} is devoted to the proof that the Galois structure
\[\Gamma_{grp} = (\mathsf{Mon}, \mathsf{Grp}, grp, mon, \E_{grp}, \z_{grp})\]
is admissible when $\E_{grp}$ and $\z_{grp}$ are the classes of surjective homomorphisms in $\mathsf{Mon}$ and $\mathsf{Grp}$, respectively. At the end of the paper, a characterization of ($\Gamma_{grp}$-)normal and ($\Gamma_{grp}$-)central extensions is given: both notions coincide with the one of \emph{special homogeneous surjection} that we are now going to recall.

\begin{definition} \label{definition homogeneous split epi} \cite{BMFMS}
Let $f \colon X \rightarrow Y$ be a split epimorphism in $\mathsf{Mon}$, with section $s$ and kernel $k$:
\begin{equation} \label{split epi}
\begin{tikzcd}
K \arrow[r,tail,"k"]
& X \arrow[r,shift left=0.1cm,"f"]
& Y. \arrow[l,shift left=0.1cm,"s"]
\end{tikzcd}
\end{equation}
The split epimorphism $(f,s)$ is \emph{homogeneous} when, for any $y \in Y$, the functions $\mu_y \colon K \rightarrow f^{-1}(y)$ and $\nu_y \colon K \rightarrow f^{-1}(y)$, defined, for any $x \in K$, by $\mu_y(x) = x + s(y)$ and $\nu_y(x) = s(y) + x$, are bijective.
\end{definition}

\begin{definition} \label{definition special homogeneous surjection} \cite{BMFMS}
Let 
\begin{tikzcd}
f \colon X \arrow[r,two heads]
& Y
\end{tikzcd}
be a surjective homomorphism in the category $\mathsf{Mon}$ of monoids. Consider its kernel pair $(Eq(f),\pi_1,\pi_2)$ with the diagonal $\Delta$:
\begin{center}
\begin{tikzcd}
Eq(f) \arrow[r,shift left=0.22cm,"{\pi_1}"] \arrow[r,shift right=0.22cm,"{\pi_2}"']
& X \arrow[l,"\Delta" description] \arrow[r,two heads,"f"]
& Y.
\end{tikzcd}
\end{center}
The morphism $f$ is said to be a \emph{special homogeneous surjection} when $(\pi_1,\Delta)$ is a homogeneous split epimorphism.
\end{definition}

Special homogeneous surjections are pullback-stable and, moreover, one has the following property:

\begin{proposition} \label{SHS reflect} \cite{BMFMS}
Consider in $\mathsf{Mon}$ the following pullback where both $f$ and $f'$ are surjective homomorphisms:
\begin{center}
\begin{tikzcd}
P \arrow[r,two heads,"f'"] \arrow[d,two heads,"g'"']
& Z \arrow[d,two heads,"g"] \\
X \arrow[r,two heads,"f"']
& Y.
\end{tikzcd}
\end{center}
If $g'$ is a special homogeneous surjection, then so is $g$.
\end{proposition}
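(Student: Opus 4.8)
The statement to prove is Proposition \ref{SHS reflect}: given a pullback square in $\mathsf{Mon}$ with $f$ and $f'$ surjective, if $g'$ is a special homogeneous surjection then so is $g$.

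\textbf{Plan of proof.} The strategy is to reduce the claim about $g$ to the hypothesis on $g'$ by transporting the relevant kernel-pair data through the pullback square. Recall that $g$ is a special homogeneous surjection precisely when the first projection $\pi_1 \colon Eq(g) \to Z$, equipped with the diagonal $\Delta_Z$ as section, is a homogeneous split epimorphism; and similarly for $g'$ with $\pi_1' \colon Eq(g') \to P$ and $\Delta_P$. First I would record that $g$ is automatically surjective: it is a pullback of the surjection $g'$... no, more directly, $g \cdot f' = f \cdot g'$ is surjective since $f$ and $g'$ are, and $g$ being a pullback of $g'$ along $f$ (wait — check the square: $f' \colon P \to Z$ sits over $f \colon X \to Y$ via $g', g$) so $g$ is the pullback of $g'$ along $f$, hence surjective because pullbacks of regular epis in $\mathsf{Mon}$... actually surjections are pullback-stable in $\mathsf{Mon}$, so $g$ is surjective. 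Next, the kernel of $g'$ maps isomorphically onto the kernel of $g$: since the square is a pullback, $f'$ restricts to an isomorphism $\mathsf{Ker}(g') \xrightarrow{\cong} \mathsf{Ker}(g)$ (the standard fact that a pullback square induces an iso on kernels of the ``vertical'' maps). Call this kernel $K$ on the $Z$-side and identify it with $\mathsf{Ker}(g')$ via $f'$.

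The heart of the argument is then: fix $z \in Z$; we must show the two translation maps $\mu_z, \nu_z \colon K \to (\pi_1)^{-1}(z) \subseteq Eq(g)$, $x \mapsto x + \Delta_Z(z)$ and $x \mapsto \Delta_Z(z) + x$ (where here $x \in K$ is viewed inside $Eq(g)$ via the inclusion $K = \{(k,0) : k \in \mathsf{Ker}(g)\} \hookrightarrow Eq(g)$, sitting in the fibre over $0$), are bijective. I would pick $p \in P$ with $f'(p) = z$ (possible since $f'$ is surjective). By hypothesis the analogous maps $\mu'_p, \nu'_p \colon \mathsf{Ker}(g') \to (\pi_1')^{-1}(p)$ are bijective. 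Now there is a map $Eq(g') \to Eq(g)$ induced by $f'$ (apply $f'$ componentwise; it respects the kernel-pair relation since $g \cdot f' = f \cdot g'$ — well, one needs $g(f'(a)) = g(f'(b))$ when $g'(a) = g'(b)$, which holds). One checks this map carries $(\pi_1')^{-1}(p)$ into $(\pi_1)^{-1}(z)$ and intertwines the translation actions: $f'(\mu'_p(x)) = \mu_z(f'(x))$ for $x \in \mathsf{Ker}(g')$, and similarly for $\nu$. So surjectivity of $\mu_z$ follows once one knows $f'$ maps $(\pi_1')^{-1}(p)$ onto $(\pi_1)^{-1}(z)$, and injectivity follows once one knows $f'$ restricted to $(\pi_1')^{-1}(p)$ is injective — i.e. that $f'$ restricts to a bijection $(\pi_1')^{-1}(p) \to (\pi_1)^{-1}(z)$.

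\textbf{The main obstacle.} The crux is precisely establishing that $f'$ induces a \emph{bijection} between the fibre $(\pi_1')^{-1}(p) = \{(p, p') \in Eq(g') : g'(p) = g'(p')\} \cong \mathsf{Ker}(g')$ (second-coordinate description) and the fibre $(\pi_1)^{-1}(z) \cong \mathsf{Ker}(g)$. Here one uses the pullback structure critically: an element of $(\pi_1)^{-1}(z)$ is a pair $(z, z')$ with $g(z) = g(z')$; lifting $z$ to $p$ already fixes the first coordinate, and the pullback property lets one lift $z'$ uniquely to the $p'$ with $f'(p') = z'$ and $g'(p') = g'(p)$ — uniqueness being exactly the pullback universal property applied to the cone $(z', p)$ over $f \colon X \to Y \leftarrow Z \colon g$ ... one has to set this up carefully, matching $g(z') = g(z) = f(g'(p))$ so that $(z', g'(p))$ is a valid cone into the pullback $P$. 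This gives the inverse map. Once this bijection of fibres (compatible with translations) is in hand, bijectivity of $\mu_z$ and $\nu_z$ is immediate from bijectivity of $\mu'_p$ and $\nu'_p$, and likewise the identification $\mathsf{Ker}(g') \cong \mathsf{Ker}(g)$ under $f'$ closes the loop. I would write the fibre-bijection lemma first, then deduce homogeneity of $(\pi_1, \Delta_Z)$, and conclude that $g$ is a special homogeneous surjection.
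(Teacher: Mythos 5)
Your argument is correct, but note that the paper itself gives no proof of this statement: it is quoted from the reference [BMFMS] on Schreier/homogeneous split epimorphisms, where it is obtained inside the general theory of homogeneous split epis rather than by hand. Your route is a direct, element-wise one, and all the essential points are in place: $g$ is surjective because $g\cdot f'=f\cdot g'$ is; $f'$ restricts to an isomorphism $\mathsf{Ker}(g')\cong\mathsf{Ker}(g)$ because $g'$ is the pullback of $g$ along $f$; and, for $p\in P$ with $f'(p)=z$, the map $f'\times f'$ restricts to a bijection $(\pi_1')^{-1}(p)\to(\pi_1)^{-1}(z)$ --- surjectivity because for $(z,z')\in Eq(g)$ the pair $(g'(p),z')$ is a cone over $f$ and $g$ (as $f(g'(p))=g(z)=g(z')$) and so lifts to $P$, injectivity because $(g',f')$ are jointly monic on the concrete pullback. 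Since this fibre bijection intertwines the translation maps $\mu'_p,\nu'_p$ with $\mu_z,\nu_z$ through the kernel isomorphism, bijectivity of the latter follows and $g$ is special homogeneous. What your elementary proof buys is self-containedness at the cost of using the concrete description of pullbacks in $\mathsf{Mon}$; the cited proof buys a statement at the level of (Schreier) split epimorphisms that also covers semirings. One small slip to fix: with $\pi_1$ the first projection of $Eq(g)$, its kernel is $\{(0,k): k\in\mathsf{Ker}(g)\}$, not $\{(k,0):k\in\mathsf{Ker}(g)\}$ as you wrote; with your convention the translation maps land in the fibre of $\pi_2$, so either correct the coordinates or invoke the swap isomorphism of the kernel pair --- the argument is unaffected.
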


We now recall in Theorem \ref{Gamm_grp normal and central extensions} the characterization of ($\Gamma_{grp}$-)normal and ($\Gamma_{grp}$-)central extensions considered in \cite{MRVdL}.
 
\begin{proposition} \label{Gamma_grp trivial extensions that are split epis} \cite{MRVdL}
Consider an arbitrary split epimorphism $(f,s)$ as in \eqref{split epi}. Then the following conditions are equivalent:
\begin{enumerate}
\item $f$ is a $(\Gamma_{grp}\text{-})$trivial extension.
\item $f$ is a special homogeneous surjection.
\end{enumerate}
\end{proposition}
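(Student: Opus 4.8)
The plan is to reformulate condition~(1): $f\colon X\to Y$ is a $(\Gamma_{grp}\text{-})$trivial extension precisely when the comparison morphism $\theta\colon X\to Y\times_{grp(Y)}grp(X)$, $x\mapsto(f(x),\eta_X(x))$, from $X$ to the pullback of $grp(f)$ along the component $\eta_Y\colon Y\to grp(Y)$ of the unit of $grp\dashv mon$, is an isomorphism. By Definitions~\ref{definition homogeneous split epi} and~\ref{definition special homogeneous surjection}, condition~(2) is the assertion that the first projection $(\pi_1,\Delta)$ of the kernel pair $Eq(f)$ of $f$ is a homogeneous split epimorphism. So the task is to connect these two statements, using the explicit description of $grp$ and of its unit.

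For $(1)\Rightarrow(2)$: trivial extensions are pullback-stable in any admissible Galois structure, so $\pi_1=f^*(f)\colon Eq(f)\to X$ is again a trivial extension, and it is split by $\Delta$. Hence it suffices to prove that \emph{a split epimorphism $(g,t)\colon X'\to Y'$ in $\mathsf{Mon}$ that is a trivial extension is a homogeneous split epimorphism}. For such a $(g,t)$, the isomorphism $X'\cong Y'\times_{grp(Y')}grp(X')$ identifies $\ker g$ with $\ker(grp(g))$ and, for every $y'\in Y'$, the fibre $g^{-1}(y')$ with the coset $\{\omega\in grp(X')\mid grp(g)(\omega)=\eta_{Y'}(y')\}$ of the normal subgroup $\ker(grp(g))$ of $grp(X')$; under these identifications the maps $\mu_{y'}$ and $\nu_{y'}$ of Definition~\ref{definition homogeneous split epi} become right, resp.\ left, translation by $\eta_{X'}(t(y'))$ inside $grp(X')$, hence bijections. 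Applying this to $(\pi_1,\Delta)$ gives~(2). (One could also avoid pullback-stability and compute $Eq(f)$ directly from $X\cong Y\times_{grp(Y)}grp(X)$, reaching the same conclusion.)

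For $(2)\Rightarrow(1)$: let $f$ be a special homogeneous surjection and a split epimorphism, with section $s$ and kernel $k\colon K\rightarrowtail X$. Specialising the homogeneity of $(\pi_1,\Delta)$ to the element $s(y)$ shows that $(f,s)$ is itself a homogeneous split epimorphism, and specialising it to an element $\kappa\in K$ yields for each $\kappa$ a left and a right inverse in $K$, so that $K$ is a group. Since $s$ is a monoid homomorphism, the homogeneity of $(f,s)$ then produces an isomorphism $X\cong K\rtimes Y$, a semidirect product of the group $K$ by the monoid $Y$ acting through a homomorphism $Y\to\mathrm{Aut}(K)$; the fact that the action lands in $\mathrm{Aut}(K)$ rather than merely in $\mathrm{End}(K)$ is exactly the bijectivity of the maps $\nu_y$. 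This action extends uniquely to a homomorphism $grp(Y)\to\mathrm{Aut}(K)$, and the universal property of the group completion gives $grp(X)\cong K\rtimes grp(Y)$ compatibly with $\eta_X$ and $grp(f)$, which become $(\kappa,y)\mapsto(\kappa,\eta_Y(y))$ and the projection $(\kappa,\bar y)\mapsto\bar y$. Computing the pullback, $Y\times_{grp(Y)}grp(X)\cong Y\times_{grp(Y)}\bigl(K\rtimes grp(Y)\bigr)\cong K\rtimes Y\cong X$, and this composite isomorphism is precisely $\theta$; so the square defining a trivial extension is a pullback, giving~(1).

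The delicate point is the direction $(2)\Rightarrow(1)$, and within it the need to control the group completion $grp(X)$, which is opaque for a general monoid. The mechanism that unlocks it is exactly the hypothesis: a split special homogeneous surjection forces $X$ to be a genuine semidirect product $K\rtimes Y$ with $K$ a group, and group completion commutes with this, $grp(K\rtimes Y)\cong K\rtimes grp(Y)$. Establishing this last isomorphism carefully — in particular verifying that the $Y$-action on $K$ takes values in automorphisms, so that it extends along $\eta_Y$ — is the heart of the proof.
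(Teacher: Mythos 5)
The paper does not actually prove this proposition: it is quoted verbatim from \cite{MRVdL}, so there is no in-text argument to compare yours against. Judged on its own terms, your reconstruction is correct. The direction $(1)\Rightarrow(2)$ is sound: pullback-stability of trivial extensions does require the admissibility of $\Gamma_{grp}$ (itself a substantial result of \cite{MRVdL}, but one the paper recalls before this proposition), and your parenthetical alternative --- computing $Eq(f)$ directly from the isomorphism $X\cong Y\times_{grp(Y)}grp(X)$ and observing that $\mu_x,\nu_x$ become translations between a normal subgroup and its cosets inside the group $grp(X)$ --- avoids even that dependency. The direction $(2)\Rightarrow(1)$ correctly isolates the two facts that make the split case tractable: specialising homogeneity of $(\pi_1,\Delta)$ at elements $s(y)$ gives homogeneity of $(f,s)$, and specialising at elements of $K$ forces every element of $K$ to have a two-sided inverse, so $K$ is a group. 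From there the Schreier-type decomposition $X\cong K\rtimes Y$ with $Y$ acting by $y\cdot\kappa=\mu_y^{-1}(\nu_y(\kappa))$, i.e.\ by \emph{automorphisms} precisely because both $\mu_y$ and $\nu_y$ are bijective, is the right mechanism, and the extension of the action along $\eta_Y\colon Y\to grp(Y)$ via the universal property (since $\mathrm{Aut}(K)$ is a group) does yield $grp(K\rtimes Y)\cong K\rtimes grp(Y)$ compatibly with $\eta_X$ and $grp(f)$. You flag this last isomorphism as the step needing care, and rightly so: a full write-up should verify that $(\kappa,y)\mapsto(\kappa,\eta_Y(y))$ satisfies the universal property of the group completion (existence of the induced map uses that the conjugation relation $\psi(\bar y)+\phi(\kappa)-\psi(\bar y)=\phi(\bar y\cdot\kappa)$, once checked on $\eta_Y(Y)$, propagates to the subgroup it generates, namely all of $grp(Y)$); this check goes through, so there is no gap, only a sketched step whose content you have correctly identified. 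One small stylistic caveat: the explicit $\sim$-quotient description of $grp$ recalled in the paper is for commutative monoids, whereas $\Gamma_{grp}$ concerns arbitrary monoids; your argument in fact only uses the universal property of the unit $\eta$, which is exactly as it should be.
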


\begin{theorem} \label{Gamm_grp normal and central extensions} \cite{MRVdL}
Let 
\begin{tikzcd}
f \colon X \arrow[r,two heads]
& Y
\end{tikzcd}
be a surjective homomorphism of monoids. Then the following conditions are equivalent:
\begin{enumerate}
\item $f$ is a special homogeneous surjection.
\item $f$ is a $(\Gamma_{grp}\text{-})$normal extension.
\item $f$ is a $(\Gamma_{grp}\text{-})$central extension.
\end{enumerate}
\end{theorem}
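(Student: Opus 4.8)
The plan is to establish $(1)\Leftrightarrow(2)$ and then $(2)\Rightarrow(3)\Rightarrow(1)$, relying on Proposition~\ref{Gamma_grp trivial extensions that are split epis} for the split-epimorphism case, on the pullback-stability of special homogeneous surjections, and on the ``reflection'' Proposition~\ref{SHS reflect}. A general fact I would use throughout is that $\mathsf{Mon}$, being a variety of algebras, is Barr-exact, so that its effective descent morphisms are precisely its regular epimorphisms, i.e. its surjective homomorphisms; in particular every surjection is a monadic extension, which is the technical comfort that $\mathsf{PreOrdGrp}$ lacked.

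For $(1)\Leftrightarrow(2)$: since $f$ is surjective it is automatically a monadic extension, so $f$ is a $(\Gamma_{grp}\text{-})$normal extension exactly when $f^*(f)$ is a $(\Gamma_{grp}\text{-})$trivial extension. Here $f^*(f)$ is the first projection $\pi_1\colon Eq(f)\to X$ of the kernel pair of $f$, a split epimorphism with section the diagonal $\Delta$. By Proposition~\ref{Gamma_grp trivial extensions that are split epis}, $\pi_1$ is trivial if and only if it is a special homogeneous surjection, which in turn holds if and only if $(\pi_1,\Delta)$ is a homogeneous split epimorphism (the special homogeneous surjections that are split epimorphisms being exactly the homogeneous split epimorphisms, by \cite{BMFMS}); and by Definition~\ref{definition special homogeneous surjection}, that last condition is precisely the statement that $f$ is a special homogeneous surjection. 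Chaining these equivalences proves $(1)\Leftrightarrow(2)$.

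The implication $(2)\Rightarrow(3)$ is immediate, every normal extension being central. For $(3)\Rightarrow(1)$: let $f\colon X\twoheadrightarrow Y$ be a $(\Gamma_{grp}\text{-})$central extension, witnessed by a monadic (hence surjective) extension $p\colon E\twoheadrightarrow Y$ for which $p^*(f)\colon E\times_Y X\to E$ is a trivial extension. Since $p^*(f)$ is a pullback of the surjection $f$ along $p$, it is surjective, and applying the left adjoint $grp$ yields a surjective homomorphism of groups (epimorphisms of groups being surjective); viewed in $\mathsf{Mon}$, any surjective group homomorphism is a special homogeneous surjection, because its kernel pair projection is split by the diagonal and left and right translations by elements of a group are bijective. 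Triviality of $p^*(f)$ says precisely that $p^*(f)$ is a pullback of $mon\, grp(p^*(f))$ along the unit $\eta_E$, so pullback-stability of special homogeneous surjections makes $p^*(f)$ a special homogeneous surjection. Finally, in the defining pullback square of $p^*(f)$ both $p$ and the projection $E\times_Y X\to X$ are surjective, so Proposition~\ref{SHS reflect}, applied with $p^*(f)$ in the role of $g'$ and $f$ in the role of $g$, gives that $f$ is a special homogeneous surjection.

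The main obstacle is the interface between the two incarnations of ``homogeneity'': the heart of $(1)\Leftrightarrow(2)$ is the \cite{BMFMS} fact that a split epimorphism is a special homogeneous surjection if and only if it is a homogeneous split epimorphism, which is exactly what allows Proposition~\ref{Gamma_grp trivial extensions that are split epis} to be invoked for the kernel-pair projection $\pi_1$. A subsidiary point requiring care is the verification that $f^*(f)$ and $p^*(f)$ are genuinely surjective and that surjections are the effective descent morphisms of $\mathsf{Mon}$, so that the descent-theoretic definitions of normal and central extension can be made concrete.
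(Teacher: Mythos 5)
This theorem is stated in the paper with the citation \cite{MRVdL} and no proof is supplied there, so there is no internal argument to compare your proposal against; judged on its own, your reconstruction is correct and follows essentially the argument of the cited source. The chain for $(1)\Leftrightarrow(2)$ is sound: surjections are effective descent morphisms in the Barr-exact category $\mathsf{Mon}$, Proposition \ref{Gamma_grp trivial extensions that are split epis} reduces triviality of $\pi_1$ to its being a special homogeneous surjection, and the interface you flag --- that a split epimorphism is a special homogeneous surjection precisely when it is a homogeneous split epimorphism --- is indeed a result of \cite{BMFMS}; alternatively you could bypass it, since pullback-stability gives that $f$ special homogeneous implies $\pi_1$ special homogeneous, while Proposition \ref{SHS reflect} applied to the kernel pair square gives the converse. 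In $(3)\Rightarrow(1)$ the two auxiliary facts you invoke, namely that $grp(p^*(f))$ is a surjective group homomorphism and that any surjective group homomorphism, viewed in $\mathsf{Mon}$, is a special homogeneous surjection (translations by group elements being bijective), are both correct, so pullback-stability along the unit $\eta_E$ does make $p^*(f)$ special homogeneous, and the final application of Proposition \ref{SHS reflect} to the defining pullback of $p^*(f)$ (whose horizontal arrows are surjective, as you verify) is exactly how the property descends to $f$.
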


\section{The functor $F \colon \mathsf{PreOrdGrp} \rightarrow \mathsf{Mono(Ab)}$ and its induced admissible Galois structure}

\begin{proposition}
There is an adjunction
\begin{equation} \label{adjunction PreOrdGrp_Ab(PreOrdGrp)}
\begin{tikzcd}
\mathsf{PreOrdGrp} \arrow[rr,bend left=7.5,"F"]
& \bot
&\mathsf{Mono(Ab)} \arrow[ll,hook',bend left=7.5,"U"]
\end{tikzcd}
\end{equation}
between the category $\mathsf{PreOrdGrp}$ of preordered groups and its full subcategory $\mathsf{Mono(Ab)}$ of abelian objects, where the right adjoint $U$ is the inclusion functor and the left adjoint $F$ is defined as in Section \ref{Commutative and abelian objects}: for any $(G,P_G) \in \mathsf{PreOrdGrp}$,
\[F(G,P_G) = (ab(G) = G/[G,G],grp(\eta_G(P_G))).\]
\end{proposition}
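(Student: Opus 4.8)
The plan is to exhibit the adjunction \eqref{adjunction PreOrdGrp_Ab(PreOrdGrp)} by composing the two adjunctions we already have at our disposal: the adjunction $C \dashv V$ of \eqref{adjunction PreOrdGrp_PreOrdAb} between $\mathsf{PreOrdGrp}$ and $\mathsf{PreOrdAb}$, and an adjunction $A \dashv W$ between $\mathsf{PreOrdAb}$ and $\mathsf{Mono(Ab)}$. Since $F = A \cdot C$ and $U = V \cdot W$, once both component adjunctions are established the composite $F \dashv U$ follows formally, with unit obtained by pasting the two units along the diagram \eqref{diag group completion}. So the real content is to prove that $A \colon \mathsf{PreOrdAb} \rightarrow \mathsf{Mono(Ab)}$, sending $(G,P_G)$ to $(G, grp(P_G))$ (legitimately an object of $\mathsf{PreOrdAb}$, indeed of $\mathsf{Mono(Ab)}$, by the last Lemma of Section \ref{Commutative and abelian objects}, which identifies $grp(P_G)$ with the subgroup $\{a - b \mid a,b \in P_G\}$ of $G$), is left adjoint to the inclusion $W$.

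First I would pin down the candidate for the $(G,P_G)$-component of the unit of $A \dashv W$: it is the morphism $(1_G, j_G) \colon (G, P_G) \rightarrow (G, grp(P_G))$ in $\mathsf{PreOrdAb}$, where $j_G \colon P_G \rightarrow grp(P_G)$ is the canonical map of the group completion (here $j_G$ is the corestriction of an inclusion, so under the identification of the last Lemma it is simply the inclusion $P_G \hookrightarrow \{a-b \mid a,b \in P_G\}$). To check universality, take any object $(B, P_B)$ of $\mathsf{Mono(Ab)}$ — so $B$ is abelian and $P_B$ is a subgroup of $B$ — and any morphism $(f, \bar{f}) \colon (G, P_G) \rightarrow (B, P_B)$ in $\mathsf{PreOrdAb}$. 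Since $\bar f \colon P_G \rightarrow P_B$ is a monoid homomorphism into a group, the universal property of the group completion $grp$ yields a unique group homomorphism $\bar g \colon grp(P_G) \rightarrow P_B$ with $\bar g \cdot j_G = \bar f$. The underlying group homomorphism $f \colon G \rightarrow B$ is already defined, and one only has to verify compatibility: that $(f, \bar g)$ is a morphism in $\mathsf{PreOrdGrp}$, i.e. that the square relating $grp(P_G) \hookrightarrow G$, $P_B \hookrightarrow B$, $\bar g$ and $f$ commutes. This follows because both composites agree after precomposition with the (mono)morphism $j_G$ (using that $grp(P_G) \hookrightarrow G$ composed with $j_G$ is $P_G \hookrightarrow G$, and that $(f,\bar f)$ is already a morphism), and because $j_G$ — being, under the identification above, the inclusion $P_G \hookrightarrow grp(P_G)$ into the generated subgroup — is an epimorphism of monoids, hence right-cancellable in $\mathsf{Mon}$; alternatively, simply compute directly that $f(a-b) = \bar g([(a,b)]) \in P_B$ for $a,b \in P_G$. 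Uniqueness of $(g, \bar g) = (f, \bar g)$ is immediate from uniqueness of $\bar g$ together with the fact that the underlying group morphism is forced.

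Having established $A \dashv W$, I would then assemble $F \dashv U$: the hom-set bijection $\mathsf{Mono(Ab)}(F(G,P_G), (B,P_B)) \cong \mathsf{PreOrdGrp}((G,P_G), U(B,P_B))$ is the composite of $\mathsf{Mono(Ab)}(A C(G,P_G), (B,P_B)) \cong \mathsf{PreOrdAb}(C(G,P_G), W(B,P_B))$ and $\mathsf{PreOrdAb}(C(G,P_G), V W(B,P_B)\text{-as-object}) \cong \mathsf{PreOrdGrp}((G,P_G), V W (B,P_B))$, all natural in both variables. Concretely, the unit of the composite at $(G,P_G)$ is the pasting $(1_{ab(G)}, j_G) \cdot (\eta_G, \bar\eta_G)$, which is precisely the morphism $(\eta_G, \hat\eta_G) \colon (G, P_G) \rightarrow (ab(G), grp(\eta_G(P_G)))$ displayed in \eqref{diag group completion}, with $\hat\eta_G = j_G \cdot \bar\eta_G$. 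I expect the main obstacle to be largely bookkeeping rather than conceptual: the one genuinely non-formal point is confirming that $A$ takes values in $\mathsf{Mono(Ab)}$ and that $(1_G, j_G)$ is a bona fide morphism of preordered groups — i.e. that $j_G$ is injective and corestricts correctly — which is exactly what the last Lemma of Section \ref{Commutative and abelian objects} supplies via the isomorphism $grp(P_G) \cong \{a-b \mid a,b \in P_G\}$. Everything else is a routine diagram chase through \eqref{diag group completion} and an invocation of the universal properties of $ab(-)$ and $grp(-)$.
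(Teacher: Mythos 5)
Your proposal is correct and follows the same route as the paper: the adjunction is obtained as the composite of $C \dashv V$ with $A \dashv W$, with unit $(\eta_G,\hat{\eta}_G)$ as in \eqref{diag group completion}. The only difference is that you verify the adjunction $A \dashv W$ explicitly via the universal property of the group completion and the lemma identifying $grp(P_G)$ with $\{a-b \mid a,b \in P_G\}$, whereas the paper simply refers to the group-completion adjunction of Section \ref{Galois theory for monoids} restricted to the positive cones; your verification is a sound filling-in of that step.
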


\begin{proof}
The adjunction \eqref{adjunction PreOrdGrp_Ab(PreOrdGrp)} can be seen as the composite of the two adjunctions 
\begin{center}
\begin{tikzcd}
\mathsf{PreOrdGrp} \arrow[rr,bend left=7.5,"C"]
& \bot
& \mathsf{PreOrdAb} \arrow[ll,hook',bend left=7.5,"V"] \arrow[rr,bend left=7.5,"A"]
& \bot
& \mathsf{Mono(Ab)} \arrow[ll,hook',bend left=7.5,"W"]
\end{tikzcd}
\end{center}
where the left-hand one has been studied in Section \ref{Functor C} and (the restriction to the positive cones of) the right-hand one has been considered in Section \ref{Galois theory for monoids}. Note that the $(G,P_G)$-component of the unit of the composite adjunction is given by the morphism $(\eta_G,\hat{\eta}_G)$ (in the notations of diagram \eqref{diag group completion}).
\end{proof}

\begin{remark}
\emph{For a given preordered group $(G,P_G)$, the $(G,P_G)$-component $(\eta_G,\hat{\eta}_G)$ of the unit of the adjunction \eqref{adjunction PreOrdGrp_Ab(PreOrdGrp)} is not a regular epimorphism, in general. The morphism $(\eta_G,\hat{\eta}_G)$ is just an epimorphism, since $\eta_G$ is surjective, while $\hat{\eta}_G$ is not necessarily surjective. Note also that the morphism $j_G \colon  \eta_G(P_G) \rightarrow grp( \eta_G(P_G))$ is a monomorphism because ${\eta}_G(P_G)$ is a commutative monoid with cancellation.}
\end{remark}

\begin{proposition}
If $\E$ denotes the class of regular epimorphisms in $\mathsf{PreOrdGrp}$ and $\z$ the class of regular epimorphisms in $\mathsf{Mono(Ab)}$, then
\begin{equation} \label{Galois structure Gamma}
\Gamma = (\mathsf{PreOrdGrp}, \mathsf{Mono(Ab)}, F, U, \E, \z)
\end{equation}
is a Galois structure.
\end{proposition}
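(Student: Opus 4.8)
The plan is to verify the four defining conditions of a Galois structure (Definition \ref{definition Galois structure}) for the system $\Gamma = (\mathsf{PreOrdGrp}, \mathsf{Mono(Ab)}, F, U, \E, \z)$. The adjunction $F \dashv U$ has already been established in the previous proposition, so what remains is: (a) $\mathsf{PreOrdGrp}$ admits pullbacks along regular epimorphisms and $\mathsf{Mono(Ab)}$ admits pullbacks along regular epimorphisms; (b) $\E$ and $\z$ are closed under composition, contain all isomorphisms, and are pullback-stable; (c) $F(\E) \subseteq \z$; and (d) $U(\z) \subseteq \E$. Since $\mathsf{PreOrdGrp}$ is complete (by \cite{CMFM}, as recalled in the Introduction), it has all pullbacks, and likewise $\mathsf{Mono(Ab)}$, being equivalent to a category of abelian-group monomorphisms, is complete; so (a) is immediate, and the stability and closure properties of regular epimorphisms in (b) hold because $\mathsf{PreOrdGrp}$ is a regular (indeed normal) category, and $\mathsf{Mono(Ab)}$ is easily checked to be regular as well.

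The first genuine point to check is (d), that $U(\z) \subseteq \E$. A regular epimorphism in $\mathsf{Mono(Ab)}$ is a morphism of monomorphisms of abelian groups whose two components are both surjective (computing regular epis componentwise in $\mathsf{Mono(Ab)}$, just as regular epis in $\mathsf{PreOrdGrp}$ are the morphisms $(f,\bar f)$ with both $f$ and $\bar f$ surjective, as recalled in Section \ref{Functor C}); since $U$ is the inclusion functor and such a morphism, viewed in $\mathsf{PreOrdGrp}$, still has both components surjective, it is a regular epimorphism there. The second and more substantive point is (c), that $F(\E) \subseteq \z$: given a regular epimorphism $(f,\bar f) \colon (G,P_G) \twoheadrightarrow (H,P_H)$ in $\mathsf{PreOrdGrp}$, we must show that $F(f,\bar f) = (ab(f), grp(\eta_H \circ \bar f)) \colon (ab(G),grp(\eta_G(P_G))) \to (ab(H),grp(\eta_H(P_H)))$ has both components surjective. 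The first component $ab(f)\colon ab(G) \to ab(H)$ is surjective because $f$ is surjective and abelianization preserves surjections. For the second component, surjectivity of $\bar f \colon P_G \twoheadrightarrow P_H$ gives surjectivity of $\eta_H(P_H) = \eta_H(\bar f(P_G)) = \overline{ab(f)}(\eta_G(P_G))$ onto $\eta_H(P_H)$, and then the functor $grp$, being a left adjoint, preserves epimorphisms, so $grp$ applied to this surjection of commutative monoids yields a surjection of abelian groups onto $grp(\eta_H(P_H))$; this is exactly the second component of $F(f,\bar f)$.

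I expect the main obstacle to be a bookkeeping one rather than a conceptual one: namely keeping straight that regular epimorphisms in the functor categories $\mathsf{PreOrdGrp}$ and $\mathsf{Mono(Ab)}$ are computed componentwise and correctly matching up the description of $F$ as the composite $A \circ C$ with the componentwise surjectivity argument. Since $F = A \circ C$ and each of $C$ and $A$ is a left adjoint (hence preserves regular epimorphisms, the regular epis in $\mathsf{PreOrdAb}$ again being componentwise-surjective morphisms), condition (c) also follows by composing $C(\E) \subseteq \z_C$ with $A(\z_C) \subseteq \z$; this gives an alternative and perhaps cleaner route, reducing everything to the already-recorded facts that $C$ is left adjoint and that $grp$ preserves surjections. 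All remaining verifications — pullback-stability of regular epimorphisms, closure under composition, and containment of isomorphisms — are standard facts about regular categories and require no computation.
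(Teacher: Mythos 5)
Your verification is correct: the paper states this proposition without proof, treating the check of the Galois-structure axioms as routine, and your argument --- the componentwise description of regular epimorphisms in both categories, completeness, pullback-stability from regularity, and the fact that $F = A \cdot C$ is a composite of left adjoints (or, directly, that abelianization and group completion preserve surjections) --- is exactly the intended justification. Nothing further is needed.
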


Since the Galois structure \eqref{Galois structure Gamma} is a composite of two “compatible” Galois structures that are admissible, we may use the following known result in order to prove its admissibility (the argument given to prove Lemma $6.2$ in \cite{DEG}, for instance, still holds in our situation).

\begin{proposition} \label{composite of admissible Galois structures}
Consider the following chain of adjunctions
\begin{equation}\label{chain of adjunctions}
\begin{tikzcd}
\C \arrow[rr,bend left=15,"J"]
& \bot
& \B \arrow[ll,hook',bend left=15,"L"] \arrow[rr,bend left=15,"I"]
& \bot
& \A \arrow[ll,hook',bend left=15,"H"]
\end{tikzcd}
\end{equation}
where $\A$ is a full subcategory of $\B$ and $\B$ a full subcategory of $\C$. Assume moreover that
\begin{itemize}
\item $\Gamma_J = (\C,\B,J,L,\E_J,\z_J)$;
\item $\Gamma_I = (\B,\A,I,H,\E_I,\z_I)$
\end{itemize}
are admissible Galois structures that are “compatible” in the sense that $J(\E_J) {\subset} \E_I$ and $H(\z_I) {\subset} \z_J$. Then the composite of these two admissible Galois structures
\[\Gamma = (\C,\A,I \cdot J,L \cdot H, \E, \z),\]
where $\E = \E_J$ and $\z = \z_I$, is an admissible Galois structure.
\end{proposition}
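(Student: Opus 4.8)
The plan is to verify the three closure conditions in Definition \ref{definition Galois structure} for the composite system $\Gamma = (\C,\A,I\cdot J, L\cdot H,\E,\z)$ with $\E=\E_J$ and $\z=\z_I$, and then to deduce admissibility from the admissibility of the two factors $\Gamma_J$ and $\Gamma_I$ together with the compatibility conditions $J(\E_J)\subseteq\E_I$ and $H(\z_I)\subseteq\z_J$. That $\Gamma$ is indeed a Galois structure is almost immediate: $I\cdot J \dashv L\cdot H$ is the composite adjunction; $\E=\E_J$ and $\z=\z_I$ are pullback-stable, closed under composition and contain all isomorphisms by hypothesis; $\C$ admits pullbacks along $\E_J$ and $\A$ admits pullbacks along $\z_I$; and the two required inclusions are $(I\cdot J)(\E)\subseteq\z$ and $(L\cdot H)(\z)\subseteq\E$. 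For the first, $J(\E_J)\subseteq\E_I$ by compatibility and then $I(\E_I)\subseteq\z_I$ since $\Gamma_I$ is a Galois structure, so $(I\cdot J)(\E_J)\subseteq\z_I$. For the second, symmetrically $H(\z_I)\subseteq\z_J$ by compatibility and $L(\z_J)\subseteq\E_J$ since $\Gamma_J$ is a Galois structure, giving $(L\cdot H)(\z_I)\subseteq\E_J$.

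For admissibility I would use the characterization (since in our setting all the counits are isomorphisms, $\A$ and $\B$ being full subcategories with $L$, $H$ fully faithful, so Proposition \ref{BasicAdj} applies): it suffices to show that $I\cdot J$ preserves every pullback of the form
\begin{equation*}
\begin{tikzcd}
B \times_{(L\cdot H)(I\cdot J)(B)} (L\cdot H)(X) \arrow[r,"{\pi_2}"] \arrow[d,"{\pi_1}"']
& (L\cdot H)(X) \arrow[d,"{(L\cdot H)(\phi)}"] \\
B \arrow[r]
& (L\cdot H)(I\cdot J)(B)
\end{tikzcd}
\end{equation*}
where $\phi\in\z=\z_I$. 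The idea is to factor this pullback through the intermediate category $\B$. Writing $\eta$ for the unit of $J\dashv L$ and $\vartheta$ for the unit of $I\dashv H$, the $B$-component of the unit of the composite adjunction is $L(\vartheta_{J(B)})\cdot\eta_B$, so the bottom arrow factors as $B \xrightarrow{\eta_B} LJ(B) \xrightarrow{L(\vartheta_{J(B)})} LHIJ(B)$. Correspondingly the pullback can be decomposed as a pullback along $L(\vartheta_{J(B)})$ stacked on top of a pullback along $\eta_B$. Concretely, form the pullback $Q$ of $(L\cdot H)(\phi)$ along $L(\vartheta_{J(B)})$ inside $\C$ — since $\vartheta$ is a unit for the admissible $\Gamma_I$ and $\phi\in\z_I$, and $L$ is fully faithful and preserves the relevant pullbacks (it is a right adjoint, hence preserves all pullbacks), $Q = L(Q')$ where $Q'$ is the corresponding pullback in $\B$, namely $Q' = JB \times_{HIJB} H(X)$, and admissibility of $\Gamma_I$ gives that $I$ sends $Q'$ to the pullback $IJ(B)\times_{IJ(B)} X \cong X$. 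Then the whole original pullback is the pullback of $L(Q')\to LJ(B)$ along $\eta_B$; since $\Gamma_J$ is admissible and this is a pullback of a morphism in (a codomain lying over) the subcategory $\B$ along the unit $\eta_B$, $J$ preserves it, sending it to the pullback $J(B)\times_{J(B)} Q' \cong Q'$. Composing, $(I\cdot J)$ applied to the original pullback yields $I(J(B)\times_{J(B)}Q')\cong I(Q')\cong X$, which is exactly the claim that $I\cdot J$ preserves the pullback.

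The main obstacle — and the point that needs care rather than a one-line citation — is the bookkeeping of the decomposition of the composite unit's pullback into two stacked pullbacks, and checking that at each stage the morphism being pulled back lies in the class for which the relevant admissible structure guarantees preservation. In particular one must check that the object $Q' = JB\times_{HIJB}H(X)$ produced at the first stage, viewed in $\C$ via $L$, fits the hypotheses of the admissibility of $\Gamma_J$ at the second stage: this is where the compatibility $H(\z_I)\subseteq\z_J$ and the fact that $\A\subseteq\B\subseteq\C$ are full are used, so that $L(HX)$ is an object of $\B$ and $L(H\phi)\in\E_J$ can serve as the $\phi$ in the admissibility criterion for $\Gamma_J$. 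As noted in the excerpt, the argument used to prove Lemma $6.2$ of \cite{DEG} transports to the present situation essentially verbatim, so I would present the decomposition explicitly and then invoke that reference for the routine verification that each stacked square is of the form preserved by the corresponding reflector.
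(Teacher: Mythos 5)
Your proposal is correct and follows essentially the same route as the paper's proof: both decompose the pullback along the composite unit into two stacked pullbacks, one along the unit $\eta_B$ of $J \dashv L$ and one along (the image under $L$ of) the unit of $I \dashv H$ at $J(B)$, then apply admissibility of $\Gamma_I$ to the top square and of $\Gamma_J$ to the bottom one, using the compatibility of the classes together with their pullback-stability to see that each square is of the form preserved by the corresponding reflector. The only differences are cosmetic: the order of presentation and your explicit check of the Galois-structure axioms, which the paper dismisses as clear.
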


\begin{corollary}
The Galois structure \eqref{Galois structure Gamma} is admissible.
\end{corollary}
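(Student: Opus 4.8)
The plan is to deduce admissibility of the Galois structure $\Gamma$ in \eqref{Galois structure Gamma} directly from Proposition \ref{composite of admissible Galois structures}, by checking that the two constituent Galois structures and their classes of morphisms satisfy all the hypotheses of that proposition. First I would recall the chain of adjunctions
\begin{center}
\begin{tikzcd}
\mathsf{PreOrdGrp} \arrow[rr,bend left=12,"C"]
& \bot
& \mathsf{PreOrdAb} \arrow[ll,hook',bend left=12,"V"] \arrow[rr,bend left=12,"A"]
& \bot
& \mathsf{Mono(Ab)} \arrow[ll,hook',bend left=12,"W"]
\end{tikzcd}
\end{center}
in which $\mathsf{Mono(Ab)}$ is a full subcategory of $\mathsf{PreOrdAb}$ (by Proposition \ref{abelian objects in PreOrdGrp}) and $\mathsf{PreOrdAb}$ a full subcategory of $\mathsf{PreOrdGrp}$. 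I would set $\Gamma_C = (\mathsf{PreOrdGrp}, \mathsf{PreOrdAb}, C, V, \E_C, \z_C)$ as in \eqref{Galois structure}, which we already know is admissible (Corollary following Proposition \ref{Modularity of PreOrdGrp}), and $\Gamma_A = (\mathsf{PreOrdAb}, \mathsf{Mono(Ab)}, A, W, \E_A, \z_A)$, where $\E_A$ and $\z_A$ are the classes of regular epimorphisms in $\mathsf{PreOrdAb}$ and $\mathsf{Mono(Ab)}$, respectively.

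The key remaining point is to verify that $\Gamma_A$ is itself an admissible Galois structure. A regular epimorphism in $\mathsf{PreOrdAb}$ is a morphism $(f,\bar f) \colon (G,P_G) \to (H,P_H)$ with $G$, $H$ abelian, $f$ surjective and $\bar f$ surjective; applying $A$ amounts, componentwise, to keeping $f$ and group-completing the positive cones, so on the positive cones $A$ restricts to the group completion functor $grp \colon \mathsf{ComMon} \to \mathsf{Ab}$ studied in \cite{MRVdL}. Since all limits and the relevant pullbacks in $\mathsf{PreOrdAb}$ are computed componentwise, admissibility of $\Gamma_A$ reduces, via Proposition \ref{BasicAdj}(3), to $F$ preserving pullbacks of the prescribed form, and this follows from the admissibility of the Galois structure $\Gamma_{grp}$ on the positive-cone component (together with the trivial fact that on the group component the reflector is the identity and hence preserves everything). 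One must also confirm that the counit of $A \dashv W$ is an isomorphism — which holds because $grp$ is idempotent on abelian groups, $grp(X) \cong X$, so that $A \cdot W \cong \mathrm{id}$ — so that Proposition \ref{BasicAdj} applies; the details are exactly those worked out in \cite{MRVdL}, transported componentwise.

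Finally I would check the compatibility condition of Proposition \ref{composite of admissible Galois structures}: that $C(\E_C) \subseteq \E_A$ and $W(\z_A) \subseteq \z_C$. The first holds because $C$ sends a normal (= regular) epimorphism of preordered groups to a regular epimorphism of preordered abelian groups — $C$ preserves regular epimorphisms, being a left adjoint between regular categories whose regular epimorphisms admit the componentwise description recalled in Section \ref{Functor C}. The second holds trivially since $W$ is a full inclusion that preserves regular epimorphisms (a regular epimorphism of abelian-object preordered groups is in particular a regular epimorphism of preordered abelian groups). With $\E = \E_C = \E$ and $\z = \z_A = \z$ the classes of regular epimorphisms in $\mathsf{PreOrdGrp}$ and $\mathsf{Mono(Ab)}$, Proposition \ref{composite of admissible Galois structures} then yields that $\Gamma = (\mathsf{PreOrdGrp}, \mathsf{Mono(Ab)}, F = A \cdot C, U = V \cdot W, \E, \z)$ is admissible. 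The main obstacle I anticipate is the careful verification that $\Gamma_A$ is admissible: although morally it is just $\Gamma_{grp}$ spread over the positive cones with an identity on the group part, one needs to be precise about how pullbacks, units and counits in $\mathsf{PreOrdAb}$ decompose componentwise so that the admissibility criterion of Proposition \ref{BasicAdj}(3) can be inherited from \cite{MRVdL}.
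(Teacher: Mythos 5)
Your proposal is correct and follows essentially the same route as the paper: the corollary is obtained by applying Proposition \ref{composite of admissible Galois structures} to the chain $C \dashv V$, $A \dashv W$, using the already-established admissibility of $\Gamma_C$ and the componentwise reduction of the admissibility of $\Gamma_A$ to that of $\Gamma_{grp}$ from \cite{MRVdL}. The paper is in fact even terser than you are on the admissibility of $\Gamma_A$, so your extra care there (counit an isomorphism since $grp$ fixes abelian groups, pullbacks computed componentwise) is a welcome, not a divergent, addition.
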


\section{Characterization of $\Gamma$-normal and $\Gamma$-central extensions}

This section is devoted to $\Gamma$-normal and $\Gamma$-central extensions which will be characterized in Theorem \ref{Gamma-normal and Gamma-central extensions}. The following two lemmas will be needed for the proof of this result.

\begin{lemma} \label{trivial extensions for a composite}
Consider a chain of adjunctions as in \eqref{chain of adjunctions} and assume that we have admissible Galois structures $\Gamma_J$ and $\Gamma_I$ as in Proposition \ref{composite of admissible Galois structures}. Assume, moreover, that any component of the unit of the adjunction $J \dashv L$ is a descent morphism. Then, for an extension  $f \colon X \rightarrow Y$ in $\C$, the following conditions are equivalent:
\begin{enumerate}
\item $f$ is a $\Gamma$-trivial extension for the admissible Galois structure $\Gamma$ described in Proposition \ref{composite of admissible Galois structures}.
\item $f$ is a $\Gamma_J$-trivial extension and $J(f)$ is a $\Gamma_I$-trivial extension.
\end{enumerate}
\end{lemma}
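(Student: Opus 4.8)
The plan is to prove both implications by chasing the relevant pullback squares, using the description of the composite unit component $\eta^\Gamma_X$ as $\eta^I_{J(X)} \cdot \eta^J_X$ (more precisely, $L\big(\eta^I_{J(X)}\big) \cdot \eta^J_X$ after applying $L$, but since $\A \subseteq \B \subseteq \C$ are full subcategories and $L$, $H$ are inclusions, I will suppress these and write $LH J (f)$ etc. as in Proposition \ref{composite of admissible Galois structures}). Recall that $f \colon X \to Y$ in $\E$ is $\Gamma$-trivial exactly when the naturality square of $\eta^\Gamma$ at $f$ is a pullback, and similarly for $\Gamma_J$ and $\Gamma_I$.

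First I would prove $(2) \Rightarrow (1)$, which is the formal direction. Assume $f$ is $\Gamma_J$-trivial, i.e. the left square below is a pullback, and $J(f)$ is $\Gamma_I$-trivial, i.e. the right square below is a pullback:
\begin{center}
\begin{tikzcd}
X \arrow[r,"{\eta^J_X}"] \arrow[d,"f"']
& LJ(X) \arrow[r,"{L\eta^I_{J(X)}}"] \arrow[d,"{LJ(f)}"]
& LHIJ(X) \arrow[d,"{LHIJ(f)}"] \\
Y \arrow[r,"{\eta^J_Y}"']
& LJ(Y) \arrow[r,"{L\eta^I_{J(Y)}}"']
& LHIJ(Y).
\end{tikzcd}
\end{center}
Since the composite of two pullback squares is a pullback, the outer rectangle is a pullback, and its top and bottom composites are precisely $\eta^\Gamma_X$ and $\eta^\Gamma_Y$ (the $X$- and $Y$-components of the unit of the composite adjunction). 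Hence $f$ is $\Gamma$-trivial.

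The substance is in $(1) \Rightarrow (2)$. Assume the outer rectangle above is a pullback. The second square — the naturality square of $\eta^I$ at $J(f)$ — is a pullback because $\Gamma_I$ is admissible: indeed $J(f) \in \E_I$ by the compatibility hypothesis $J(\E_J) \subseteq \E_I$, and by Proposition \ref{BasicAdj}(3) (applicable because the counits are isomorphisms) the reflector $I$ preserves the relevant pullbacks, which is exactly the statement that every extension of $I(J(X))$ pulls back along the unit to a $\Gamma_I$-trivial extension; in particular the kernel pair projection and, more to the point here, $J(f)$ itself — wait, this needs care: $\Gamma_I$-triviality of $J(f)$ is not automatic. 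Let me instead argue directly. Form the pullback $Q$ of $L\eta^I_{J(Y)}$ along $LHIJ(f)$; by admissibility of $\Gamma_I$ this pullback, call the projection $\bar{q} \colon Q \to LJ(Y)$, is a $\Gamma_I$-trivial extension, and there is a canonical comparison $u \colon LJ(X) \to Q$ over $LJ(Y)$ with $\bar q \cdot u = LJ(f)$ and the outer-square-is-pullback hypothesis forcing the composite $X \to LJ(X) \xrightarrow{u} Q$ to exhibit $X$ as the pullback of $\bar q$ along $\eta^J_Y$. Now I use the hypothesis that $\eta^J_Y$ is a descent morphism: descent morphisms reflect the property of "being an isomorphism" for comparison maps between pullback cones, so from the fact that both $f$ and $\bar q$ pull back along $\eta^J_Y$ to the same object $X$ (compatibly), $u$ must be an isomorphism. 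Therefore $LJ(f) \cong \bar q$ is a $\Gamma_I$-trivial extension, i.e. $J(f)$ is $\Gamma_I$-trivial; feeding this back, the right square is a pullback, and then the standard pullback-cancellation lemma (if the outer rectangle and the right square are pullbacks, so is the left square) gives that the left square is a pullback, i.e. $f$ is $\Gamma_J$-trivial.

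The main obstacle is the step where I deduce $\Gamma_I$-triviality of $J(f)$ from $\Gamma$-triviality of $f$: this is where the descent hypothesis on the components of $\eta^J$ is essential, and one must be careful to set up the comparison map $u$ correctly and invoke the fact that a descent morphism (being in particular a pullback-stable regular epimorphism here, in the normal category $\mathsf{PreOrdGrp}$, or more generally an effective descent morphism) reflects isomorphisms among maps of pullback cones over it. An alternative, perhaps cleaner, route avoids the comparison map: apply $J$ to the outer pullback rectangle; since $J$ preserves the pullbacks of the relevant shape by admissibility of $\Gamma_J$ (Proposition \ref{BasicAdj}(3)) and $L$, $H$ are fully faithful inclusions, one gets that $IJ(f)$'s naturality square for $\eta^I$ composed appropriately is a pullback, from which $\Gamma_I$-triviality of $J(f)$ follows after identifying $J(\eta^J_X)$ with an isomorphism (the counit/triangle identity). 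I would present whichever of these two arguments is shorter once the precise form of the descent hypothesis is pinned down; in either case the pullback-cancellation lemma for the composite of squares does the final bookkeeping.
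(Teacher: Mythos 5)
Your direction $(2) \Rightarrow (1)$ is correct and coincides with the paper's argument. The problem lies in $(1) \Rightarrow (2)$, at the step where you claim that the comparison map $u \colon LJ(X) \to Q$ is an isomorphism because ``descent morphisms reflect isomorphisms between pullback cones''. To apply conservativity of $(\eta_Y^J)^*$ to $u$, viewed as a morphism of $\E(LJ(Y))$ from $LJ(f)$ to $\bar{q}$, you must first know that $(\eta_Y^J)^*(u)$ is an isomorphism. Its codomain $(\eta_Y^J)^*(\bar{q})$ is indeed $(X,f)$ by the outer-pullback hypothesis, but its domain is $(\eta_Y^J)^*(LJ(f))$, and the assertion that this is $(X,f)$ compatibly is precisely the statement that the left-hand square is a pullback, i.e.\ that $f$ is $\Gamma_J$-trivial --- one of the two things you are trying to prove. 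As written the argument is circular: the descent hypothesis cannot be used to establish $\Gamma_I$-triviality of $J(f)$ \emph{first} and then recover $\Gamma_J$-triviality of $f$ by cancellation.

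The fix is already contained in your construction of $Q$, and it is how the paper argues. Since $IJ(f) \in \z_I$, the compatibility $H(\z_I) \subseteq \z_J$ and the pullback-stability of $\z_J$ give $\bar{q} = H^{J(Y)}(IJ(f)) \in \z_J(J(Y))$; composing the outer pullback with the $Q$-defining pullback shows that $f \cong (\eta_Y^J)^*(L\bar{q}) = L^Y(\bar{q})$. Hence $f$ lies in the image of $L^Y$ and is $\Gamma_J$-trivial by the second formulation of triviality --- this is the paper's factorization $(L \cdot H)^Y = L^Y \cdot H^{J(Y)}$, and it uses no descent. Only now, with both the left square and the outer rectangle known to be pullbacks, does the descent property of $\eta_Y^J$ enter: $(\eta_Y^J)^*(u)$ becomes an isomorphism, hence so is $u$, i.e.\ the right-hand square is a pullback and $J(f)$ is $\Gamma_I$-trivial. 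Your alternative sketch (apply $J$ to the outer rectangle, viewed as the pullback of $L(\bar{q})$ along $\eta_Y^J$ with $\bar{q} \in \z_J$, and invoke Proposition \ref{BasicAdj}(3) for $\Gamma_J$) is a viable second route to the triviality of $J(f)$, but as presented it remains a sketch; in either version the order of deductions must be reversed from the one your main argument attempts.
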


\begin{proof}
\begin{itemize}
\item $(1) \Rightarrow (2)$: By assumption, the square
\begin{center}
\begin{tikzcd}
X \arrow[r,"{\eta_X}"] \arrow[d,"f"']
& (I \cdot J)(X) \arrow[d,"{(I \cdot J)(f)}"]\\
Y \arrow[r,"{\eta_Y}"']
& (I \cdot J)(Y)
\end{tikzcd}
\end{center}
is a pullback in $\C$. Note that this pullback decomposes into the following two squares:
\begin{equation} \label{diagram 2 squares}
\begin{tikzcd}[row sep=large,column sep=large]
X \arrow[r,"{\eta_{X}^{J}}"] \arrow[d,"f"']
& J(X) \arrow[r,"{\eta_{J(X)}^{I}}"] \arrow[d,"{J(f)}"]
& (I \cdot J)(X) \arrow[d,"{(I \cdot J)(f)}"]\\
Y \arrow[r,"{\eta_{Y}^{J}}"']
& J(Y) \arrow[r,"{\eta_{J(Y)}^{I}}"']
& (I \cdot J)(Y)
\end{tikzcd}
\end{equation}
(where $\eta^J$ and $\eta^{I}$ are the units of the adjunctions $J \dashv L$ and $I \dashv H$, respectively). Equivalently, the fact that $f$ is a $\Gamma$-trivial extension can also be formulated by saying that 
\[f = (L \cdot H)^Y(g)\]
for some extension $g \colon A \rightarrow (I \cdot J)(Y)$ in $\A$. Now,
\[(L \cdot H)^Y(g) = L^Y(H^{J(Y)}(g)),\]
so that $f = L^Y(\bar{g})$, for an extension $\bar{g} = H^{J(Y)}(g)$ in $\B$ with codomain $J(Y)$. This precisely means that $f$ is a $\Gamma_J$-trivial extension, hence in the diagram \eqref{diagram 2 squares} both the left-hand square and the external rectangle are pullbacks. Since $\eta_{Y}^{J}$ is a descent morphism, it then follows that also the right-hand square is a pullback, meaning that $J(f)$ is a $\Gamma_I$-trivial extension.
\item $(2) \Rightarrow (1)$: Statement $(2)$ means that both squares in diagram \eqref{diagram 2 squares} are pullbacks, so that the external rectangle is also a pullback, and $f$ is a $\Gamma$-trivial extension. \qedhere
\end{itemize}
\end{proof}

\begin{lemma} \label{special homogeneous surjections in PreOrdGrp}
Let 
\begin{tikzcd}
(f,\bar{f}) \colon (G,P_G) \arrow[r,two heads]
& (H,P_H)
\end{tikzcd}
be a regular epimorphism in $\mathsf{PreOrdGrp}$. Then $\bar{f}$ is a special homogeneous surjection in $\mathsf{Mon}$ if and only if, for any $(x,y) \in Eq(\bar{f})$, $y - x \in P_G$ and $-x + y \in P_G$.
\end{lemma}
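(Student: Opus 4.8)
The statement characterizes when the restriction $\bar f\colon P_G \to P_H$ of a regular epimorphism $(f,\bar f)$ is a special homogeneous surjection, in terms of a closure condition on $P_G$. The plan is to unwind Definition~\ref{definition special homogeneous surjection}: $\bar f$ is a special homogeneous surjection precisely when the first projection $\pi_1\colon Eq(\bar f)\to P_G$ of its kernel pair, together with the diagonal $\Delta\colon P_G\to Eq(\bar f)$, is a homogeneous split epimorphism in $\mathsf{Mon}$. So I first identify the relevant data: the kernel of $(\pi_1,\Delta)$ in $\mathsf{Mon}$ is $K=\{(x,y)\in Eq(\bar f)\mid x=0\}\cong\{y\in P_H\text{-preimage of }0\}$, but more usefully $K=\{(0,y)\mid y\in P_G,\ \bar f(y)=0\}$ sitting inside $Eq(\bar f)\subseteq P_G\times P_G$. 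By Definition~\ref{definition homogeneous split epi}, homogeneity of $(\pi_1,\Delta)$ means that for every $x\in P_G$ the two translation maps $\mu_x,\nu_x\colon K\to\pi_1^{-1}(x)$ sending $(0,k)$ to $(0,k)+(x,x)=(x,k+x)$ and to $(x,x)+(0,k)=(x,x+k)$ respectively are bijections onto the fiber $\pi_1^{-1}(x)=\{(x,y)\mid (x,y)\in Eq(\bar f)\}$.

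Next I translate bijectivity of $\mu_x$ and $\nu_x$ into the stated condition. Injectivity of both maps is automatic from cancellation in the group $G$ (the elements live in $P_G\subseteq G$, and $k\mapsto k+x$ is injective). The content is surjectivity. Surjectivity of $\mu_x$ says: for every $y\in P_G$ with $\bar f(y)=\bar f(x)$ (i.e. $(x,y)\in Eq(\bar f)$), there is $k\in P_G$ with $\bar f(k)=0$ and $k+x=y$; the unique candidate in $G$ is $k=y-x$, so the requirement is exactly that $y-x\in P_G$ (the condition $\bar f(y-x)=0$ being automatic since $\bar f(y)=\bar f(x)$, using that $\bar f$ extends to the group homomorphism $f$). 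Symmetrically, surjectivity of $\nu_x$ amounts to $-x+y\in P_G$. Running over all $x$ and all $y$ with $(x,y)\in Eq(\bar f)$, and replacing the pair $(x,y)$ by a generic element of $Eq(\bar f)$, yields precisely: for all $(x,y)\in Eq(\bar f)$, $y-x\in P_G$ and $-x+y\in P_G$. This gives both implications at once.

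The only mildly delicate point—and the one I would spell out carefully—is checking that the fiber $\pi_1^{-1}(x)$ is correctly described and that the map $K\to\pi_1^{-1}(x)$ has the asserted image, i.e. that the element $k=y-x$ computed in $G$ actually lies in the \emph{monoid} $K$, which is exactly where the condition $y-x\in P_G$ enters (membership in $P_G$, not just in $G$, and then $\bar f(k)=f(y)-f(x)=0$ puts it in $K$). There is no real obstacle here; the proof is essentially a direct translation of the definitions, and the modest subtlety is just the bookkeeping that distinguishes ``$y-x$ makes sense in $G$'' from ``$y-x\in P_G$''.
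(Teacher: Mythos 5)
Your proof is correct and follows essentially the same route as the paper's: both unwind Definition \ref{definition special homogeneous surjection} for the split epimorphism $(\bar{\pi}_1,\bar{\Delta})$, identify $\mathsf{Ker}(\bar{\pi}_1)\cong \mathsf{Ker}(f)\cap P_G$, observe that injectivity of $\mu_x,\nu_x$ is automatic by cancellation in $G$ so that only surjectivity carries content, and reduce surjectivity to the membership of $y-x$ and $-x+y$ in $P_G$ (the vanishing of $\bar f$ on these elements being automatic). No gaps.
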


\begin{proof}
The surjective homomorphism $\bar{f}$ is special homogeneous precisely when 
\begin{tikzcd}
Eq(\bar{f}) \arrow[r,shift left,"{\bar{\pi}_1}"]
& P_G \arrow[l,shift left,"{\bar{\Delta}}"] 
\end{tikzcd}
is a homogeneous split epimorphism (see Definition \ref{definition special homogeneous surjection}). This happens if and only if,
for any $x \in P_G$, the functions $$\mu_x \colon \mathsf{Ker}(\bar{\pi}_1) \cong \mathsf{Ker}(\bar{f}) \rightarrow \bar{\pi}_{1}^{-1}(x); (0,a) \mapsto (0,a) + \bar{\Delta}(x) = (x,a+x)$$ and $$\nu_x \colon \mathsf{Ker}(\bar{\pi}_1) \cong \mathsf{Ker}(\bar{f}) \rightarrow \bar{\pi}_{1}^{-1}(x); (0,b) \mapsto \bar{\Delta}(x) + (0,b) = (x,x + b)$$ are bijective. This is equivalent to the fact that, for any $(x,y) \in Eq(\bar{f})$, there exist a unique $a \in \mathsf{Ker}(\bar{f})$ and a unique $b \in \mathsf{Ker}(\bar{f})$ such that $(x,y) = (x,a+x)$ and $(x,y) = (x,x + b)$.
This condition also amounts to asking the existence of a unique $a \in \mathsf{Ker}(f) \cap P_G$ and a unique $b \in \mathsf{Ker}(f) \cap P_G$ such that $y = a + x$ and $y = x + b$, so that $a$ and $b$ are elements of $G$ of the form $a = y - x$ and $b = -x + y$. 
As a consequence, the surjective homomorphism $\bar{f}$ is special homogeneous if and only if, for any $(x,y) \in Eq(\bar{f})$, $y - x \in P_G$ and $-x + y \in P_G$.
\end{proof}

\begin{theorem} \label{Gamma-normal and Gamma-central extensions}
Let
\begin{tikzcd}
(f,\bar{f}) \colon (G,P_G) \arrow[r,two heads]
& (H,P_H)
\end{tikzcd}
be a regular epimorphism in $\mathsf{PreOrdGrp}$. Then the following conditions are equivalent:
\begin{enumerate}
\item 
\begin{itemize}
\item[(a)] $\mathsf{Ker}(f) \subseteq Z(G)$;
\item[(b)] $\bar{f}$ is a special homogeneous surjection in $\mathsf{Mon}$.
\end{itemize}
\item $(f,\bar{f})$ is a $(\Gamma\text{-})$normal extension.
\item $(f,\bar{f})$ is a $(\Gamma\text{-})$central extension.
\end{enumerate}
\end{theorem}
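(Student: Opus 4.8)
The plan is to prove the cycle $(1) \Rightarrow (2) \Rightarrow (3) \Rightarrow (1)$, exploiting the fact that $\Gamma$ is the composite of the two admissible Galois structures $\Gamma_C$ and (essentially) $\Gamma_{grp}$, together with the characterizations already at our disposal: Theorem \ref{GammaC-normal extensions} for $\Gamma_C$-normal extensions, Theorem \ref{Gamm_grp normal and central extensions} and Proposition \ref{Gamma_grp trivial extensions that are split epis} for the monoid side, and Lemma \ref{trivial extensions for a composite} to glue trivial extensions along the composite. A recurring translation device will be Lemma \ref{special homogeneous surjections in PreOrdGrp}, which expresses ``$\bar f$ is a special homogeneous surjection'' in the concrete form ``$y-x \in P_G$ and $-x+y \in P_G$ for all $(x,y) \in Eq(\bar f)$''.

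\medskip

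\textbf{$(1) \Rightarrow (2)$.} Assume (a) and (b). To show $(f,\bar f)$ is $\Gamma$-normal we must check that the first projection $(\pi_1,\bar\pi_1)$ of the kernel pair of $(f,\bar f)$ is a $\Gamma$-trivial extension; by Lemma \ref{trivial extensions for a composite} (the unit components of $C \dashv V$ are regular epimorphisms, hence descent morphisms) it is enough to show that $(\pi_1,\bar\pi_1)$ is $\Gamma_C$-trivial and that $C(\pi_1,\bar\pi_1)$ is $\Gamma_I$-trivial, where $\Gamma_I$ is the Galois structure on $\mathsf{PreOrdAb}$ induced by $A \dashv W$. For $\Gamma_C$-triviality: (a) gives that $f$ is algebraically central, and I claim (b) implies Condition $(\star)$ for $(f,\bar f)$. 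Indeed if $(a,b,c) \in Eq(\bar\eta_G) \times_{P_G} Eq(\bar f)$ then $(b,c) \in Eq(\bar f)$, so by Lemma \ref{special homogeneous surjections in PreOrdGrp} we have $-b+c \in P_G$; since $a - b \in [G,G]$ is central and $a,b,c \in P_G$, a short manipulation writing $a - b + c = a + (-b+c)$ and using centrality of $a-b$ together with $-b+c \in P_G$ will show $a - b + c \in P_G$. Hence Theorem \ref{GammaC-normal extensions} applies and $(f,\bar f)$ is $\Gamma_C$-normal, so $(\pi_1,\bar\pi_1)$ is $\Gamma_C$-trivial. For $\Gamma_I$-triviality of $C(\pi_1,\bar\pi_1)$: the reflector $C$ sends a kernel pair to (a quotient of) a kernel pair, and on positive cones the relevant split epimorphism $\overline{C(\pi_1)}$ has section $\overline{C(\Delta)}$; special homogeneity of $\bar f$ is pullback-stable and passes to $\bar\pi_1$ (Proposition \ref{SHS reflect} or direct computation), hence to its image under $C$, so by Proposition \ref{Gamma_grp trivial extensions that are split epis} the split epi $\overline{C(\pi_1)}$ is $\Gamma_{grp}$-trivial, which is exactly $\Gamma_I$-triviality of $C(\pi_1,\bar\pi_1)$. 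This gives (2).

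\medskip

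\textbf{$(2) \Rightarrow (3)$} is automatic: every normal extension is central (as recalled after the definitions in Section 1).

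\medskip

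\textbf{$(3) \Rightarrow (1)$.} Assume $(f,\bar f)$ is $\Gamma$-central, so there is a monadic extension $(p,\bar p)\colon (E,P_E) \twoheadrightarrow (H,P_H)$ such that the pullback $(p^*(f), \overline{p^*(f)}) = (\pi_1,\bar\pi_1)\colon (P,P_P) \twoheadrightarrow (E,P_E)$ is $\Gamma$-trivial. By Lemma \ref{trivial extensions for a composite} again, $(\pi_1,\bar\pi_1)$ is then $\Gamma_C$-trivial and $C(\pi_1,\bar\pi_1)$ is $\Gamma_I$-trivial. $\Gamma_C$-triviality implies $(\pi_1,\bar\pi_1)$ is a $\Gamma_C$-central extension, hence $(f,\bar f)$ is $\Gamma_C$-central, and since $\mathsf{PreOrdGrp}$ is normal with modular lattice of normal subobjects, central and normal extensions coincide for $\Gamma_C$ (Proposition \ref{prop JK} together with the known fact that admissibility plus this modularity forces central $=$ normal); thus $(f,\bar f)$ is $\Gamma_C$-normal, and Theorem \ref{GammaC-normal extensions} yields (a). For (b): $\Gamma_I$-triviality of $C(\pi_1,\bar\pi_1)$ means, by Theorem \ref{Gamm_grp normal and central extensions} applied on positive cones, that $\overline{C(\pi_1)}$ is a special homogeneous surjection; special homogeneous surjections are ``reflected'' along pullbacks of surjections (Proposition \ref{SHS reflect}), so running back down the pullback squares relating $\overline{C(\pi_1)}$, $\bar\pi_1$ and $\bar f$, one concludes $\bar f$ is a special homogeneous surjection. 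This gives (1) and closes the cycle.

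\medskip

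\textbf{The main obstacle} I anticipate is the bookkeeping in $(3) \Rightarrow (1)$: transporting the special-homogeneous-surjection property from the abelianized-and-group-completed morphism $\overline{C(\pi_1)}$ back down to $\bar f$ itself. The reflector $C$ quotients by the commutator subgroup, so $C(\pi_1,\bar\pi_1)$ is not literally the kernel pair of $C(f,\bar f)$, and the passage from $Eq(\bar f)$ to $Eq\bigl(\overline{C(\pi_1)}\bigr)$ mixes a commutator quotient with a group completion. One must check carefully that the concrete criterion of Lemma \ref{special homogeneous surjections in PreOrdGrp} survives this chain — that is, that the elements $y-x$, $-x+y$ of $P_G$ witnessing homogeneity downstairs can be recovered from their images upstairs using centrality of $\mathsf{Ker}(f)$ (which is why condition (a) is genuinely needed here, not just (b)). The analogous delicate point in $(1)\Rightarrow(2)$ — that special homogeneity of $\bar f$ really does descend to a statement about $C(\bar\pi_1)$ rather than being lost under $C$ — is handled the same way, via pullback-stability of special homogeneous surjections and Proposition \ref{SHS reflect}.
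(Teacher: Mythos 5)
Your proposal is correct and follows essentially the same route as the paper: decompose $\Gamma$ through $\Gamma_C$ and the group-completion structure, use Lemma \ref{trivial extensions for a composite} to split triviality into the two stages, Theorem \ref{GammaC-normal extensions} plus Lemma \ref{special homogeneous surjections in PreOrdGrp} for the $\Gamma_C$-level, and Propositions \ref{SHS reflect} and \ref{Gamma_grp trivial extensions that are split epis} with Theorem \ref{Gamm_grp normal and central extensions} to move the special-homogeneity condition up and down the pullback squares. Two small remarks: in $(1)\Rightarrow(2)$ no centrality of $a-b$ is needed to see $a-b+c=a+(-b+c)\in P_G$ — it is just the submonoid property of $P_G$ applied to $a$ and $-b+c$; and in $(3)\Rightarrow(1)$ the paper derives condition (a) directly from the fact that $\Gamma_C$-centrality restricts, on the group components, to an algebraically central extension of groups, which avoids your appeal to the unproved (though true) claim that central and normal extensions coincide for $\Gamma_C$.
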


\begin{proof}
\begin{itemize}
\item $(1) \Rightarrow (2)$: First remark that condition (b) implies that $a - b + c \in P_G$ for any $(a,b,c) \in Eq(\bar{\eta}_G) \times_{P_G} Eq(\bar{f})$. Indeed, thanks to Lemma \ref{special homogeneous surjections in PreOrdGrp}, we know that $-b + c \in P_G$ since $(b,c) \in Eq(\bar{f})$, and then $a - b + c \in P_G$ since $a \in P_G$, and $P_G$ is a submonoid of $G$. So condition (b) implies condition (ii) of Theorem \ref{GammaC-normal extensions} and it then follows, thanks to the validity of condition (a), that $(f,\bar{f})$ is a $\Gamma_C$-normal extension. In other words, the first projection \begin{tikzcd}
(\pi_1, \bar{\pi}_1) \colon (Eq(f),Eq(\bar{f})) \arrow[r,two heads]
& (G,P_G),
\end{tikzcd}
of the kernel pair of $(f,\bar{f})$ is a $\Gamma_C$-trivial extension. According to Lemma \ref{trivial extensions for a composite}, it now remains to prove that $C(\pi_1,\bar{\pi}_1)$ is a $\Gamma_A$-trivial extension (where $\Gamma_A$ denotes the admissible Galois structure associated with the reflection $A \dashv W$). By condition (b), $\bar{f}$ is a special homogeneous surjection, which implies that also $\bar{\pi}_1$ is a special homogeneous surjection by pullback-stability. This in turn implies that the restriction $C(\bar{\pi}_1)$ of $C(\pi_1,\bar{\pi}_1)$ to the positive cones is a special homogeneous surjection thanks to Proposition \ref{SHS reflect}. Indeed, the square
\begin{center}
\begin{tikzcd}[row sep = large, column sep = large]
Eq(\bar{f}) \arrow[r,two heads,"{\bar{\eta}_{Eq(f)}}"] \arrow[d,two heads,"{\bar{\pi}_1}"']
& \eta_{Eq(f)}(Eq(\bar{f})) \arrow[d,two heads,"{C(\bar{\pi}_1)}"]\\
P_G \arrow[r,two heads,"{\bar{\eta}_G}"']
& \eta_G(P_G)
\end{tikzcd}
\end{center}
is a pullback of regular epimorphisms  in $\mathsf{Mon}$, since $(\pi_1,\bar{\pi}_1)$ is a $\Gamma_C$-trivial extension. Accordingly, $C(\bar{\pi}_1)$ is a split epimorphism which is a special homogeneous surjection, and is then a $\Gamma_{grp}$-trivial extension
by Proposition \ref{Gamma_grp trivial extensions that are split epis}. This means that the square
\begin{center}
\begin{tikzcd}[row sep = large,column sep=large]
\eta_{Eq(f)}(Eq(\bar{f})) \arrow[r,tail,"{j_{Eq(f)}}"] \arrow[d,two heads,"{C(\bar{\pi}_1)}"']
& grp(\eta_{Eq(f)}(Eq(\bar{f}))) \arrow[d,two heads,"{grp(C(\bar{\pi}_1))}"]\\
\eta_G(P_G) \arrow[r,tail,"{j_G}"']
& grp(\eta_G(P_G))
\end{tikzcd}
\end{center}
is a pullback in the category $\mathsf{ComMon}$ of commutative monoids. As a consequence, the square
\begin{center}
\begin{tikzcd}[row sep = huge,column sep=huge]
(ab(Eq(f)),\eta_{Eq(f)}(Eq(\bar{f}))) \arrow[r,tail,"{(1,j_{Eq(f)})}"] \arrow[d,two heads,"{C(\pi_1,\bar{\pi}_1)}"']
& (ab(Eq(f)),grp(\eta_{Eq(f)}(Eq(\bar{f})))) \arrow[d,two heads,"{(A \cdot C)(\pi_1,\bar{\pi}_1) = F(\pi_1,\bar{\pi}_1)}"]\\
(ab(G),\eta_G(P_G)) \arrow[r,tail,"{(1,j_G)}"']
& (ab(G),grp(\eta_G(P_G)))
\end{tikzcd}
\end{center}
is a pullback in the category $\mathsf{PreOrdAb}$ of preordered abelian groups, and this proves that $C(\pi_1,\bar{\pi}_1)$ is a $\Gamma_A$-trivial extension. By Lemma \ref{trivial extensions for a composite}, we conclude that $(\pi_1,\bar{\pi}_1)$ is a $\Gamma$-trivial extension, which is equivalent to saying that $(f,\bar{f})$ is a $\Gamma$-normal extension.
\item $(2) \Rightarrow (3)$: Any normal extension is central by definition.
\item $(3) \Rightarrow (1)$: By definition of a ($\Gamma$-)central extension, there exists an effective descent morphism (i.e. a regular epimorphism)
\begin{tikzcd}
(p,\bar{p}) \colon (E,P_E) \arrow[r,two heads]
& (H,P_H)
\end{tikzcd} 
in $\mathsf{PreOrdGrp}$ such that the pullback $(p,\bar{p})^*(f,\bar{f})$ (which we will denote by $(\pi_1,\bar{\pi}_1)$) of $(f,\bar{f})$ along $(p,\bar{p})$ is a $\Gamma$-trivial extension. Using Lemma \ref{trivial extensions for a composite}, this means that
\begin{itemize}
\item[$(\alpha)$] $(\pi_1,\bar{\pi}_1)$ is a $\Gamma_C$-trivial extension, and
\item[$(\beta)$] $C(\pi_1,\bar{\pi}_1)$ is a $\Gamma_A$-trivial extension. 
\end{itemize}
The first statement $(\alpha)$ means of course that $(f,\bar{f})$ is a $\Gamma_C$-central extension. In particular, $f$ is algebraically central, that is,
\begin{tikzcd}
f \colon G \arrow[r,two heads]
& H
\end{tikzcd}
satisfies condition (a). The second statement $(\beta)$ now implies that the square
\begin{center}
\begin{tikzcd}[row sep = huge,column sep=huge]
(ab(P),\eta_{P}(P_P)) \arrow[r,tail,"{(1,j_{P})}"] \arrow[d,two heads,"{(C(\pi_1),C(\bar{\pi}_1)) = C(\pi_1,\bar{\pi}_1)}"']
& (ab(P),grp(\eta_{P}(P_P))) \arrow[d,two heads,"{(A \cdot C)(\pi_1,\bar{\pi}_1) = F(\pi_1,\bar{\pi}_1)}"]\\
(ab(E),\eta_E(P_E)) \arrow[r,tail,"{(1,j_E)}"']
& (ab(E),grp(\eta_E(P_E)))
\end{tikzcd}
\end{center}
(where $P = E \times_H G$ and $P_P = P_E \times_{P_H} P_G$) is a pullback in $\mathsf{PreOrdAb}$. In particular, its restriction to $\mathsf{ComMon}$
\begin{center}
\begin{tikzcd}[row sep = large,column sep=large]
\eta_{P}(P_P) \arrow[r,tail,"{j_P}"] \arrow[d,two heads,"{C(\bar{\pi}_1)}"']
& grp(\eta_{P}(P_P)) \arrow[d,two heads,"{grp(C(\bar{\pi}_1))}"]\\
\eta_E(P_E) \arrow[r,tail,"{j_E}"']
& grp(\eta_E(P_E))
\end{tikzcd}
\end{center}
is a pullback. This means that $C(\bar{\pi}_1)$ is a $\Gamma_{grp}$-trivial extension, and then a $\Gamma_{grp}$-normal extension. Indeed, since $\Gamma_{grp}$ is admissible, any $\Gamma_{grp}$-trivial extension is $\Gamma_{grp}$-normal. Thanks to Theorem \ref{Gamm_grp normal and central extensions}, we can conclude that $C(\bar{\pi}_1)$ is a special homogeneous surjection. Since special homogeneous surjections are pullback-stable, it then follows that $\bar{\pi}_1$ has this same property. Indeed, the statement $(\alpha)$ implies (among other things) that the square
\begin{center}
\begin{tikzcd}
P_P \arrow[r,two heads,"{\bar{\eta}_P}"] \arrow[d,two heads,"{\bar{\pi}_1}"']
& \eta_P(P_P) \arrow[d,two heads,"{C(\bar{\pi}_1)}"]\\
P_E \arrow[r,two heads,"{\bar{\eta}_E}"']
& \eta_E(P_E)
\end{tikzcd}
\end{center}
is a pullback in $\mathsf{Mon}$. Applying now Proposition \ref{SHS reflect} to the following pullback of regular epimorphisms
\begin{center}
\begin{tikzcd}
P_P \arrow[r,two heads,"{\bar{\pi}_2}"] \arrow[d,two heads,"{\bar{\pi}_1}"']
& P_G \arrow[d,two heads,"{\bar{f}}"]\\
P_E \arrow[r,two heads,"{\bar{p}}"']
& P_H,
\end{tikzcd}
\end{center}
we obtain that $\bar{f}$ is a special homogeneous surjection, i.e. condition (b) is satisfied. This concludes the proof. \qedhere
\end{itemize}
\end{proof}

\begin{bibdiv}

\begin{biblist}

\bib{Barr}{book}{
   author={Barr, M.},
   title={Exact categories},
   publisher={Exact categories and categories of sheaves, Lecture notes in mathematics, vol. 236, Springer-Verlag},
   date={1971},
}

\bib{BB}{book}{
   author={Borceux, F.},
   author={Bourn, D.},
   title={Mal'cev, protomodular, homological and semi-abelian categories},
   publisher={Mathematics and Its Applications 566, Kluwer},
   date={2004},
}

\bib{BMFMS}{book}{
   author={Bourn, D.},
   author={Martins-Ferreira, N.},
   author={Montoli, A.},
   author={Sobral, M.},
   title={Schreier split epimorphisms in monoids and in semirings},
   publisher={Textos de Matem\'{a}tica (Série B), vol. 45, Departamento de Matem\'{a}tica da Universidade de Coimbra},
   date={2014},
}

\bib{CMFM}{article}{
   author={Clementino, M.M.},
   author={Martins-Ferreira, N.},
   author={Montoli, A.},
   title={On the categorical behaviour of preordered groups},
   journal={J. Pure Appl. Algebra 223},
   date={2019},
   pages={4226--4245},
}

\bib{DEG}{article}{
author={Duckerts, M.},
author={Everaert, T.},
   author={Gran, M.}
   title={A description of the fundamental group in terms of commutators and
closure operators},
   journal={J. Pure Appl. Algebra 216},
   date={2012},
   pages={1837--1851},
}

\bib{GR}{article}{
   author={Gran, M.},
   author={Rodelo, D.},
   title={On the characterization of Jonsson-Tarski and of subtractive varieties},
   journal={Diagrammes, Tome S67-68},
   date={2012},
   pages={101--115},
}

\bib{JG90}{article}{
   author={Janelidze, G.},
   title={Pure Galois theory in categories},
   journal={J. Algebra 132, no. 2},
   date={1990},
   pages={270--286},
}

\bib{JG}{book}{
   author={Janelidze, G.},
   title={Categorical Galois theory: revision and some recent developments},
   publisher={Galois connections and applications, Math. Appl., vol. 565, Kluwer Acad. Publ.},
   date={2004},
}

\bib{GJ-Hopf}{article}{
   author={Janelidze, G.},
   title={Galois groups, abstract commutators, and Hopf formula},
   journal={ Appl. Categor. Struct. 16},
   date={2008},
   pages={653--668},
}

\bib{JK}{article}{
   author={Janelidze, G.},
   author={Kelly, G.M.},
   title={Galois theory and a general notion of central extension},
   journal={J. Pure Appl. Algebra 97},
   date={1994},
   pages={135--161},
}

\bib{JST}{article}{
   author={Janelidze, G.},
   author={Sobral, M.},
    author={Tholen, W.},
   title={Beyond Barr exactness: effective descent morphisms, in: M.C. Pedicchio, W. Tholen (Eds.), Categorical Foundations, in: Encyclopedia of Mathematics and Its Applications},
   journal={Cambridge University Press},
   date={2004},
   pages={359--405},
}

\bib{JT}{article}{
   author={Janelidze, G.},
   author={Tholen, W.},
   title={Facets of Descent, II},
   journal={Appl. Categ. Struct. 5},
   date={1997},
   pages={229--248},
}

\bib{JZ}{article}{
   author={Janelidze, Z.},
   title={Subtractive categories},
   journal={Appl. Categor. Struct. 13},
   date={2005},
   pages={343--350},
}

\bib{JZ10}{article}{
   author={Janelidze, Z.},
   title={The pointed subobject functor, {$3\times3$} lemmas, and subtractivity of spans},
   journal={Theory Appl. Categ. 23 (11)},
   date={2010},
   pages={221--242},
}

\bib{MRVdL}{article}{
   author={Montoli, A.},
   author={Rodelo, D.},
   author={Van der Linden, T.},
   title={A Galois theory for monoids},
   journal={Theory Appl. Categ. 29, no. 7},
   date={2014},
   pages={198--214},
}

\end{biblist}

\end{bibdiv}

\end{document}